\long\def\comment#1{}
\newtheorem{theorem}{Theorem}[section]
\newtheorem{prop}[theorem]{Proposition}
\newtheorem{lemma}[theorem]{Lemma}
\theoremstyle{definition}
\newtheorem{example}[theorem]{Example}
\theoremstyle{remark}
\newtheorem{remark}{Remark}[section]
\numberwithin{equation}{section}
\def\Grp#1{\left(#1\right)}
\def\Cbr#1{\left\{#1\right\}}
\def\Sbr#1{\left[#1\right]}
\def\Flr#1{\left\lfloor#1\right\rfloor}
\def\Cil#1{\left\lceil#1\right\rceil}
\def\Abs#1{\left|#1\right|}
\def\cf#1{\mathbf{1}\!\Cbr{#1}}
\def\cum#1#2{#1_1+\cdots+#1_{#2}}		
\def\inum#1{#1_1, #1_2, \ldots}             	
\def\eno#1#2{#1_1, \ldots, #1_{#2}}         	
\def\gv{\,|\,}
\def\toi{\to\infty}
\def\Lsup{\varlimsup}
\def\Iff{\Longleftrightarrow}  	
\def\Lrif{\Longrightarrow}     	
\def\AND{,\,}
\def\cdf{F}  
\def\cbfrac#1#2{\{#1/#2\}}  
\def\cfx#1{\mathbf{1}\{#1\}}
\def\cvp#1{\sp{#1^*}}
\def\dbin{\mathrm{Bin}}
\def\dd{\mathrm{d}}
\def\dpois{\mathrm{Poisson}}
\def\Ints{\mathbb{Z}}
\def\intzi{\int_0^\infty}
\def\Iof{\mathcal{K}} 
\def\iof{K} 
\def\ldh{H} 
\def\levy{\text{L\'evy}}
\def\limdf{\phi_\cdf} 
\def\lfrac#1#2{#1/#2}  
\def\iunit{\mathrm{i}}
\def\mean{\mathbb{E}}
\def\Nats{\mathbb{N}}
\def\nth#1{\frac{1}{#1}}
\def\ppar{\varrho}
\def\pr#1{\mathbb{P}\{#1\}}
\def\Reals{\mathbb{R}}
\def\renf{U}  
\def\RV{\mathcal{R}}
\def\rvf{A}     
\def\rx{\epsilon}
\def\sumoi#1{\sum_{#1=1}^\infty}
\def\sumzi#1{\sum_{#1=0}^\infty}
\def\tail#1{\overline#1}
\def\tailr{r_\cdf} 
\def\xpp{p^+}   
\def\XR{R_{T,\eta,r,c_1,c_2}}
\def\Xr{R}
\begin{document}
\begin{center}
  \textbf{\MakeUppercase{
      Integral criteria for Strong Renewal Theorems with infinite mean
    }
  } \\[1ex]
  \normalsize
  Zhiyi Chi\\
  Department of Statistics\\
  University of Connecticut \\
  Storrs, CT 06269, USA \\[.5ex]
  E-mail: zhiyi.chi@uconn.edu
\end{center}
  
\begin{abstract}
  Let $\cdf$ be a probability measure on $\Reals$ in the domain of
  attraction of a stable law with exponent $\alpha\in (0,1)$.  We
  establish integral criteria on $\cdf$ that significantly expand the
  probabilistic approach to Strong Renewal Theorems (SRTs).  The
  criterion for $\alpha \in (0,1/2]$ is much weaker than currently
  available ones and in some cases provides sufficient and necessary
  conditions for the SRT.  The criterion for $\alpha \in (1/2, 1)$
  establishes the SRT in full generality and in a unified way, barring
  the Limit Local Theorems employed.  As an application, for
  infinitely divisible $\cdf$, an integral criterion on its $\levy$
  measure is established for the SRT.  As another application, for
  $\cdf$ in the domain of attraction of a stable law without
  centering, an integral criterion on $\cdf$ is established for the
  SRT for the ladder height process of a random walk with step
  distribution $\cdf$.

  \paragraph{\it Keywords and phrases.} Renewal, regular variation,
  ladder height process, infinitely divisible, large deviations.
  
  \paragraph{\it 2010 Mathematics Subject Classification.} 60K05,
  60F10.
\end{abstract}

\section{Introduction} \label{s:intro}
Let $\cdf$ be a probability measure on $\Reals$ and $\cdf\cvp n$ its
$n$-fold self-convolution, with $\cdf\cvp 0$ being
the unit mass at 0.  The renewal measure associated with $\cdf$ is
\begin{align} \label{e:renew-measure} 
  \renf(\dd x):=\sumzi n \cdf\cvp n(\dd x).
\end{align}
This paper concerns the strong renewal theorem (SRT) for $\cdf$, i.e.\
the nontrivial asymptotic of $\renf((x, x+h])$ as $x\toi$, with $h\in
(0,\infty)$, when $\cdf$ has infinite mean and is in the domain
of attraction of a stable law.  More precisely, denoting $\cdf(x) =
\cdf((-\infty, x])$, suppose $\cdf$ satisfies the 
following two tail conditions.
\begin{enumerate}[itemsep=0ex]
\item Regular right tail: for some $\alpha \in (0,1)$ and $\rvf \in
  \RV_\alpha$,
  \begin{align}  \label{e:tail-RV}
    \text{$\tail \cdf(x) := 1 - \cdf(x)
      \sim \lfrac{1}{\rvf(x)}$ as $x\toi$}.
  \end{align}
\item Converging tail ratio:
  \begin{align} \label{e:tail-ratio}
    \tailr :=\lim_{x\toi} \cbfrac{\cdf(-x)}{\tail\cdf(x)}
    \ \text{ exists and is finite}.
  \end{align}
\end{enumerate}
In the regular right tail condition, $\RV_\alpha$ stands for the
class of functions that are \emph{strictly positive\/} on $(0,\infty)$
and regularly varying at $\infty$ with exponent $\alpha$.  Denote
$\RV = \cup_\alpha \RV_\alpha$.  In this paper, $\cdf$ is said to be
arithmetic if it is concentrated on $h\Ints$ for some $h>0$, and said
to be lattice if it is concentrated on $a+h\Ints$ for some $0\le a<h$.
In either case, the span of $\cdf$ is the largest $h$ with the above
property.  A lattice distribution on $a+h\Ints$ is non-arithmetic if
and only if $a/h$ is irrational.  Henceforth, $\cdf$, $\rvf$, and $h$
are fixed, and $I\equiv (0,h]$.  Unless mentioned otherwise, if $\cdf$
is non-arithmetic, $h$ can be any positive number; however, if $\cdf$
is arithmetic, $h$ is its span.  Denote $\xpp = \pr{X>0} =
\tail\cdf(0)$.  By \eqref{e:tail-RV}, $\xpp>0$.

It is a classical result that if $\alpha \in (1/2,1)$, then $x\tail
\cdf(x) \renf(x+I)\to C h$ with $C>0$ a constant, when $\cdf$ is
arithmetic \cite{garsia:lamperti:63, williamson:68}, or is
non-arithmetic and concentrated on $[0,\infty)$ \cite
{erickson:70:tams}.  It is also known that if $\alpha \in (0,1/2]$,
then the SRT in general does not hold without extra conditions
\cite{williamson:68}.  There are two main approaches to the SRT, one
being Fourier analytic, the other probabilistic.  When 
$\xpp=1$, by directly attacking the Fourier transform of $\renf(x+I)$,
the SRT can be established \cite {garsia:lamperti:63,
  erickson:70:tams}.  However, this approach critically depends on the
assumption $\alpha\in (1/2,1)$ so it cannot be extended to $\alpha \le
1/2$.  The probabilistic approach basically separates $\renf(x+I)$
into two parts, one being the sum of $\cdf\cvp n(x+I)$ over $n\ge
\rvf(\delta x)$ with $\delta>0$ being arbitrarily small, and the other
being the sum over $n<\rvf(\delta x)$, which we will refer to as the
``small-$n$ contribution''.  Since the convergence of the 
first part can be resolved using Local Limit Theorems (LLTs) and
Riemann sum approximation \cite{garsia:lamperti:63, erickson:70:tams,
  chi:14:aap}, essentially, the SRT holds if and only if the small-$n$
contribution vanishes as $\delta\to 0$ \cite{chi:14:aap}.  For
arithmetic $\cdf$, if $\alpha\in (1/2,1)$, or if $\alpha\in (1/4,1/2]$
and
\begin{gather} \label{e:Williamson}
  \sup_{x>0} \omega(x) < \infty,
  \\
  \text{where}\ \
  \omega(x) = \lfrac{x \cdf(x+I)}{\tail\cdf(x)},
  \label{e:prob-ratio}
\end{gather}
then by Fourier analysis, the vanishing small-$n$ contribution can be
established \cite{williamson:68}.  However, for $\alpha\in (0,1/4]$,
Fourier analysis failed to work, and it took quite a while until
\cite{doney:97} to resolve the issue for arithmetic $\cdf$ satisfying
\eqref{e:Williamson}.  A key ingredient of the proof in
\cite{doney:97} was a local large deviation (LLD) estimate.  Recently,
for non-lattice $\cdf$ satisfying \eqref{e:Williamson}, using a rather
simple argument that bypassed the LLD estimate, the SRT was
established \cite{vatutin:13:tpa}.  Nevertheless, the LLD estimate
turns out to be useful in general.  It has been refined by the
analysis of ``small-step sequences'' in the study on LLDs of sums of
random variables \cite{denisov:08:ap}.  A similar estimate will play
an important role in this paper.

One problem with the uniform bound condition \eqref{e:Williamson}
is that it is quite restrictive.  For example, one can easily find
examples of arithmetic $\cdf$ with $\alpha \in (1/2, 1)$ that fail to
satisfy \eqref{e:Williamson}, even though it is a foregone
conclusion that the SRT holds for $\cdf$.  Therefore, to expand the
scope of the probabilistic approach, a much weaker condition is
needed.

In \cite{chi:14:aap}, it is shown that integrals of large values of
$\omega$ can provide much weaker sufficient conditions for the SRT.  
This paper further pursues the idea.  Define for $\eta\in (0,1]$,
\begin{align} \label{e:overflow}
  \iof_\eta(x,T)
  =
  \int_{(1-\eta)x} ^x \Sbr{\omega(y) - T}^+\dd y,
\end{align}
where $c^\pm = (\pm c)\vee 0$ for $c\in\Reals$.  The special case
$\iof_1(x,T)$ was studied in \cite{chi:14:aap}.  However, conditions
based on $\iof_1(x,T)$ are still too strong, for example, to provide a
unified treatment of the small-$n$ contribution when $\alpha \in (1/2,
1)$.  This limitation disappears with $\eta<1$.

The main technical tool of the paper is given in Theorem
\ref{t:sum-small-n}, which is a bound for the small-$n$
contribution expressed in terms of integrals involving $\omega$.
The bound is tighter than the one provided by $\iof_\eta$ and is of
interest in its own right.  On the other hand, $\iof_\eta$ is more
convenient to use.  From Theorem \ref{t:sum-small-n} several SRTs
based on $\iof_\eta$ will be derived.  Comparing to the results in
\cite{chi:14:aap}, these SRTs require weaker conditions and often
have much shorter proofs.  To state these SRTs, first some
notation.  Let $S_n = \cum X n$ with $X_i$ i.i.d.\ $\sim \cdf$.
Denote $\rvf^-(x) = \inf\{t\ge 1: \rvf(t)> x\}$, where the restriction
that $t\ge 1$ is to avoid non-essential problems in case $\rvf(0+) >
x$.  Since $\rvf$ tends to $\infty$ at $\infty$, $\rvf^-$ is
well-defined on $(0, \infty)$ and $\rvf^-\in \RV_{1/\alpha}$
\cite[][Th.~1.5.12] {bingham:89}.  Denote $a_n = \rvf^-(n)$.  By
definition, $a_0=1$.  Under \eqref{e:tail-RV} -- \eqref{e:tail-ratio},
$S_n / a_n$ converges in distribution to a stable random variable
$\zeta$ of exponent $\alpha$ \cite[][p.~207--213]{breiman:92}.
Henceforth, denote
\begin{align*}
  \limdf = \text{density of $\zeta$}, \quad
  \ppar = \ppar_\cdf = \pr{\zeta>0}.
\end{align*}
Then $\ppar = 1/2 + (\pi\alpha)^{-1} \tan^{-1}(c\tan(\pi \alpha/2))$
with $c = (1-\tailr) / (1+\tailr)$ \cite[][p.~380]{bingham:89}.  Our
first result concerns the case $\alpha \in (0, 1/2]$.  Comparing to
Theorem 2.1 of \cite{chi:14:aap}, it requires weaker conditions and as
the same time has more clear conclusions.
\begin{theorem} \label{t:SRT-alpha-LE-1/2}
  Let $\alpha\in (0,1/2]$ and \eqref{e:tail-RV} --
  \eqref{e:tail-ratio} hold.  Suppose for some $L\in\RV$
  bounded below by 1 and constants $T\ge 0$,
  $\eta \in (0,1]$, the following are true as $x\toi$.
  \begin{enumerate}[topsep=1ex, itemsep=0ex, leftmargin=4ex,
    label={\alph*})]
  \item $L(x)/\rvf(x)\to 0$ and
    \begin{gather} \label{e:low-cut}
      \frac{x}{\rvf(x)} \sum_{n<L(x)} \cdf\cvp n(x+I)\to 0.
    \end{gather}
  \item If $\alpha\in (0,1/2)$, then
    \begin{align} \label{e:diff2}
      &
      \iof_\eta(x, T)
      =
      o\Grp{
        \lfrac{\rvf(x)^2 \rvf^-(L(x))}{L(x)^2}
      }.
    \end{align}
  \item If $\alpha=1/2$, then letting $u(x) = \int_1^x
    \cbfrac{\rvf(s)}{s}^2\,\dd s$ and $k(x) = \rvf(x)^2 / u(x)$,
    \begin{align}  \label{e:diff}
      &
      \iof_\eta(x, T)
      =
      \begin{cases}
        \displaystyle
        O(k(x)), &
        \displaystyle
        \text{if}\ 
        \lfrac{u(x)}{u(\rvf^-(L(x)))}\to 1
        \\[1.5ex]
        o(k(x)), & \text{else.}
      \end{cases}
    \end{align}
  \end{enumerate}
  Then the SRT holds, i.e.
  \begin{align} \label{e:renewal}
    \lim_{x\toi} x\tail\cdf(x) \renf(x+I) =
    \alpha h\intzi x^{-\alpha} \limdf(x)\,\dd x.
  \end{align}
\end{theorem}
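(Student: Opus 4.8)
The plan is to derive \eqref{e:renewal} from Theorem~\ref{t:sum-small-n} by the reduction to the small-$n$ contribution recalled in the introduction. First I would use that reduction: by the probabilistic splitting $\renf(x+I)=\sum_{n\ge0}\cdf\cvp n(x+I)$ together with the treatment of the range $n\ge\rvf(\delta x)$ via Local Limit Theorems and Riemann sums \cite{chi:14:aap}, and since $x\tail\cdf(x)\sim x/\rvf(x)$, the SRT \eqref{e:renewal} holds as soon as
\[
  \lim_{\delta\to 0}\ \limsup_{x\toi}\ \frac{x}{\rvf(x)}\sum_{0\le n<\rvf(\delta x)}\cdf\cvp n(x+I)=0 .
\]
I would then split this sum at $n=L(x)$: the hypothesis $L(x)/\rvf(x)\to0$ keeps $L(x)$ in the genuine small-$n$ regime, and \eqref{e:low-cut} disposes of the part over $n<L(x)$ after multiplication by $x/\rvf(x)$. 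Thus everything reduces to the ``medium range'' $L(x)\le n<\rvf(\delta x)$.

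Next I would apply Theorem~\ref{t:sum-small-n} to $\sum_{L(x)\le n<\rvf(\delta x)}\cdf\cvp n(x+I)$. Its bound is assembled from $\rvf$, $\rvf^-$, the cut-offs $L(x)$ and $\rvf(\delta x)$, and integrals of $[\omega(\cdot)-T]^+$ over subintervals of $((1-\eta)x,x]$ (and analogous intervals at intermediate scales), each of which is dominated by $\iof_\eta(\cdot,T)$. Evaluating the resulting scale factors using regular variation of $\rvf,\rvf^-,L$, I expect the normalized medium-range sum to be controlled, up to constants, by $\iof_\eta(x,T)\cdot L(x)^2/\bigl(\rvf(x)^2\rvf^-(L(x))\bigr)$ when $\alpha<1/2$ and by $\iof_\eta(x,T)/k(x)=\iof_\eta(x,T)\,u(x)/\rvf(x)^2$ when $\alpha=1/2$, in each case multiplied by a $\delta$-prefactor: a positive power of $\rvf(\delta x)/\rvf(x)$ (which tends to $\delta^\alpha$) when $\alpha<1/2$, and a factor that is $o(1)$ as $\delta\to0$ exactly when $u(x)/u(\rvf^-(L(x)))\to1$ — and only $O(1)$ otherwise — when $\alpha=1/2$. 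The function $u$ enters at the critical exponent because there the factor $\rvf^-(\cdot)/(\cdot)^2$ degenerates to a slowly varying function, and $u$ is the bookkeeping device recording the logarithmic effect of the summation over $n$.

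The conclusion is then a matter of reading off the hypotheses. For $\alpha<1/2$, \eqref{e:diff2} says exactly that $\iof_\eta(x,T)\cdot L(x)^2/\bigl(\rvf(x)^2\rvf^-(L(x))\bigr)\to0$, so the medium range vanishes for every fixed $\delta$ and one lets $\delta\to0$. For $\alpha=1/2$, the two cases of \eqref{e:diff} are matched to the two behaviors of the $\delta$-prefactor: when $u(x)/u(\rvf^-(L(x)))\to1$ the prefactor supplies the missing $o(1)$, so the weaker $\iof_\eta(x,T)=O(k(x))$ suffices, whereas otherwise one genuinely needs $\iof_\eta(x,T)=o(k(x))$. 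In either case the medium-range contribution vanishes after $x\toi$ and then $\delta\to0$, which together with the first paragraph gives \eqref{e:renewal}.

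The main obstacle will be these middle steps: unwinding Theorem~\ref{t:sum-small-n} so that the factors $L(x)^2/\bigl(\rvf(x)^2\rvf^-(L(x))\bigr)$ and $1/k(x)$ appear exactly, checking that replacing its (tighter) bound by one expressed through $\iof_\eta$ loses no decay, and above all handling the critical case $\alpha=1/2$, where the slowly varying $u$ must be carried through the sum over $n$, the sharpness of the $O$-versus-$o$ dichotomy in \eqref{e:diff} justified, and enough uniformity in $x$ retained so that the limits can be taken in the order $x\toi$, then $\delta\to0$.
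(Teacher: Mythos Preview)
Your approach is essentially the paper's: reduce to the small-$n$ contribution via Remark~\ref{r:small-n-truncation}, split off $n<L(x)$ by \eqref{e:low-cut}, apply Theorem~\ref{t:sum-small-n}, and bound the remainder $\XR$ through $\iof_\eta$ (the paper packages this last step as Lemma~\ref{l:R-bound} and then Lemma~\ref{l:small-n-truncation}).

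However, your description of the $\delta$-dependence is off and would mislead you when you try to write the details. In Theorem~\ref{t:sum-small-n} the bound splits as $\Lsup_{x\toi}\XR(x,\delta)+O(\delta^{2\alpha})T+o(\delta^\alpha)$; the last two pieces are the ones that vanish as $\delta\to 0$, whereas $\XR(x,\delta)$ must be shown to vanish as $x\toi$ \emph{for each fixed} $\delta$. For $\alpha<1/2$ there is no $\delta$-prefactor on the $\iof_\eta$ part: the integral $\int_{\rvf^{-1}(L(x))}^{\delta x}(\rvf(s)/s)^2\,\dd s$ is dominated by $\int_{\rvf^{-1}(L(x))}^{\infty}(\rvf(s)/s)^2\,\dd s\asymp L(x)^2/\rvf^{-1}(L(x))$ uniformly in $\delta$, and \eqref{e:diff2} then makes $\XR(x,\delta)=o(1)$ as $x\toi$. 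For $\alpha=1/2$ the relevant factor is $[u(\delta x)-u(\rvf^{-1}(L(x)))]/u(x)$; since $u\in\RV_0$, this tends as $x\toi$ to $1-\lim u(\rvf^{-1}(L(x)))/u(x)$, which is $0$ exactly when $u(x)/u(\rvf^{-1}(L(x)))\to 1$ and is at most $1$ otherwise. So the dichotomy in \eqref{e:diff} is again resolved by the $x$-limit, not by sending $\delta\to 0$. Your conclusions survive, but the ``$\delta$-prefactor supplies the missing $o(1)$'' picture is not what is actually happening.
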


\begin{remark}
  If \eqref{e:low-cut} -- \eqref{e:diff} hold for one $h>0$, they
  hold for all $0<h<\infty$.  If $\cdf$ satisfies
  \eqref{e:Williamson}, then for any $T>\sup\omega$ and $\eta\in
  (0,1]$, $\iof_\eta(x, T)\equiv 0$, and so one can set
  $L(x)\equiv 1$.
\end{remark}
\begin{remark} \label{r:small-n-truncation}
  From the Appendix in \cite{chi:14:aap}, the SRT holds $\Iff$
  \begin{align} \label{e:small-n-truncation}
    \lim_{\delta\to 0}
    \Lsup_{x\toi} x\tail \cdf(x)
    \sum_{n<\rvf(\delta x)} 
    \cdf\cvp n(x+I)=0.
  \end{align}
  In particular,
  \begin{align} \label{e:SRT-omega}
    \text{SRT} \Lrif \omega(x) = o(\rvf(x)^2).
  \end{align}
  This may give a hint why $\rvf(x)^2$ appears in the bounds in
  \eqref{e:diff2} and \eqref{e:diff}.
\end{remark}

\begin{example} \label{ex:Williamson}
  Consider the following example adapted from \cite{williamson:68}.
  Let $\cdf$ be concentrated on $\Ints\setminus \{0\}$, such that
  for $n\ne 0$,
  \begin{align*}
    \cdf\{n\}
    =
    \begin{cases}
      C 2^{-k/2} / b^\pm_k
      & \text{$n=\pm 2^k$ for some $k\in\Nats$}
      \\
      C |n|^{-3/2} \ln |n| & \text{otherwise}
    \end{cases}
  \end{align*}
  where $\inf_k b^\pm_k>0$ and $C>0$ is the normalizing constant.  In
  \cite{williamson:68}, $b^+_k = b^-_k=k$.  It is easy to show that
  $\tail\cdf(x) \sim\cdf(-x) \sim D |x|^{-1/2} \ln |x|$ as $x\toi$
  for a constant $D\in (0,\infty)$.  Therefore, $\cdf$ satisfies
  the tail conditions \eqref{e:tail-RV} -- \eqref{e:tail-ratio} and
  one can set $\rvf(x) = D^{-1}\sqrt{x}/\ln x$ for $x\ge 2$.  However,
  if $\Lsup_k b^+_k/k<\infty$, then there is a constant $C>0$, such
  that for infinitely many $n=2^k$, $b^+_k \le C k$.  For these
  $n=2^k$,
  \begin{align*}
    \cbfrac{n}{\rvf(n)} \cdf\{n\}
    \gg \frac{2^k}{2^{k/2}/k} \frac{2^{-k/2}}{k} \equiv 1,
  \end{align*}
  so by \eqref{e:SRT-omega} the SRT cannot hold.  Here $u\asymp v$
  means that functions $u$ and $v$ defined on the same domain satisfy
  both $u\ll v$ and $u\gg v$, where $u\ll v$ stands for $|u|\le C |v|$
  for some constant $C$ and $u\gg v$ is the same as $v\ll u$.
  
  We show that if instead $b^+_k/k\toi$ as $k\toi$, then the SRT
  holds.  First $h=1$.  Let $L(x)\equiv 1$.  Then \eqref{e:low-cut}
  trivially holds.  Since $\alpha=1/2$, we need to check
  \eqref{e:diff}.   Each $(x, x+1]$ contains exactly one integer, say
  $n$.  Then $\cdf(x+I) = \cdf\{n\}$.  If $n\not\in \{2^k,
  k\in\Nats\}$, then $\omega(x) = \lfrac{x\cdf\{n\}}{\tail\cdf(x)}
  \asymp 1$.  If $n= 2^k$ for some $k\in\Nats$, then by $\ln x \asymp
  k = o(b^+_k)$,
  \begin{align*}
    \omega(x)
    = \frac{C x n^{-1/2}}{b^+_k \tail\cdf(x)}
    \asymp
    \frac{C x n^{-1/2}}{D b^+_k x^{-1/2} \ln x}
    =
    O\Grp{\frac{x}{b^+_k\ln x}}
    =
    o(\rvf(x)^2).
  \end{align*}
  Fix $\eta\in (0, 1/2)$.  For $x>0$, let $J_x = \{y\in [(1-\eta) x,
  x]: 2^k\in (y,y+1]$ for some $k\in \Nats\}$.   For $x$ large enough,
  $J_x$ is either empty or a single interval of length at most 1.  For
  $y\in [(1-\eta) x, x]\setminus J_x$, $\omega(y) = O(1)$, while for
  $y\in J_x$, $\omega(y) = o(\rvf(x)^2)$.  Consequently, given $T>0$
  large enough, $\iof_\eta(x,T)  = \int_{J_x} [\omega(y)-T]^+\,\dd y =
  o(\rvf(x)^2)$.  On the other hand, by $\cbfrac{\rvf(x)}{x}^2  \sim
  x^{-1} (\ln x)^{-2}$, $u(\infty-)<\infty$.  Therefore,
  \eqref{e:diff} holds, completing the proof.

  Finally, if $\cdf\{n\} = C|n|^{-1/2}/b^+_k$ for $n =
  \lceil c^k\rceil$ instead of $2^k$, where $c>1$ is fixed,
  then the above argument still works by letting $\eta \in (0,1-1/c)$.
  \qed
\end{example}

We next consider the case $\alpha \in (1/2,1)$.  Currently available
proofs for this case heavily rely on Fourier analysis and are
substantially different for the arithmetic and non-arithmetic $\cdf$
\cite{garsia:lamperti:63, williamson:68, erickson:70:tams}.  Theorem
\ref{t:sum-small-n} provides a unified proof for the small-$n$
contribution, whether $\cdf$ is arithmetic or non-arithmetic.  In
addition, the proof applies to non-arithmetic $\cdf$ supported by the
entire $\Reals$, for which the SRT appears yet to be established in
the literature. 
\begin{theorem} \label{t:SRT-alpha-GT-1/2} 
  Let  $\alpha\in (1/2,1)$ and \eqref{e:tail-RV} -- \eqref
  {e:tail-ratio} hold.  Then the SRT holds.
\end{theorem}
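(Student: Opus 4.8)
By Remark~\ref{r:small-n-truncation} the SRT \eqref{e:renewal} is equivalent to \eqref{e:small-n-truncation}, so it suffices to produce, for each fixed $\delta>0$, a constant $C(\delta)$ with $C(\delta)\to0$ as $\delta\downarrow0$ such that
\[
  \Lsup_{x\toi}\ x\,\tail\cdf(x)\sum_{n<\rvf(\delta x)}\cdf\cvp n(x+I)\ \le\ C(\delta).
\]
The plan is to read such a $C(\delta)$ off from Theorem~\ref{t:sum-small-n}, and to observe that when $\alpha>1/2$ it may be taken of the form $O(\delta^{2\alpha-1})$, \emph{with no integrability hypothesis on $\omega$ at all} --- the a priori bound $\omega(y)\le y$ will suffice. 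Split the sum at a slowly varying $L\in\RV$ tending to $\infty$, so that $L(x)/\rvf(x)\to0$.

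The bulk, over $L(x)\le n<\rvf(\delta x)$, is controlled by Theorem~\ref{t:sum-small-n}: with cutoff index $N\approx\rvf(\delta x)$, corresponding to spatial scale $a_N\asymp\delta x$, and with any fixed $\eta$ and $T\ge0$, it bounds this partial sum by a combination of terms free of $\omega$ plus a term proportional to an integral of $\Sbr{\omega(y)-T}^+$ over a window shrinking toward $x$, of the kind in \eqref{e:overflow}. Two elementary inputs then close this range. First, since $\alpha\in(1/2,1)$ one has $a_n=\rvf^-(n)\in\RV_{1/\alpha}$ with $1-1/\alpha\in(-1,0)$, hence $n/a_n\in\RV_{1-1/\alpha}$ and $\sum_{n\le N}n/a_n\asymp N^2/a_N$ for large $N$; taking $N\approx\rvf(\delta x)$, so that $a_N\asymp\delta x$ and $N\sim\delta^{\alpha}\rvf(x)$, and using $x\,\tail\cdf(x)\asymp x/\rvf(x)$, one finds that every $\omega$-free term contributes (after multiplication by $x\,\tail\cdf(x)$) a quantity of size $O(\delta^{2\alpha})$. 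Second, the trivial bound $0\le\omega(y)\le y$, coming from $\cdf(y+I)\le\tail\cdf(y)$ --- equivalently $\cdf((x-c\delta x,x])\le\tail\cdf((1-c\delta)x)\asymp\tail\cdf(x)$ for $\delta$ small --- shows that the average of $\omega$ over the relevant window near $x$ is only $O(1/\delta)$; inserting this into the $\omega$-term of Theorem~\ref{t:sum-small-n} and using the same scaling relations makes that term contribute $O(\delta^{-1}\cdot\delta^{2\alpha})=O(\delta^{2\alpha-1})$. Thus the contribution of the range $L(x)\le n<\rvf(\delta x)$ is $O(\delta^{2\alpha-1})$, and the hypothesis $2\alpha-1>0$ is exactly what makes this vanish as $\delta\downarrow0$.

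The remaining range $n<L(x)$ --- the analogue of \eqref{e:low-cut} --- is handled directly: since $x+I\subseteq(x,\infty)$ and $S_n\le n\max_{i\le n}X_i$, the event $\{S_n\in x+I\}$ forces $\max_{i\le n}X_i>x/n$, so $\cdf\cvp n(x+I)\le n\,\tail\cdf(x/n)$, and by Potter's bounds $n\,\tail\cdf(x/n)\ll n^{1+\alpha+o(1)}\tail\cdf(x)$ uniformly in $n\le L(x)$; summing, $x\,\tail\cdf(x)\sum_{n<L(x)}\cdf\cvp n(x+I)\ll x\,\tail\cdf(x)^2\,L(x)^{2+\alpha+o(1)}$, which is regularly varying of index $1-2\alpha<0$ and hence $\to0$. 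Combining the two ranges gives the displayed inequality with $C(\delta)=O(\delta^{2\alpha-1})$; letting $\delta\downarrow0$ yields \eqref{e:small-n-truncation}, and the SRT follows. The one genuinely delicate step is the second elementary input above: one must check that, with only the crude estimate $\omega(y)\le y$ in hand, the $\omega$-contribution in Theorem~\ref{t:sum-small-n} really scales like $\delta^{2\alpha-1}$ and not like some nonpositive power of $\delta$. This is precisely where $\alpha>1/2$ is used, and where the sharpness of Theorem~\ref{t:sum-small-n} relative to the cruder functional $\iof_\eta$ --- and the freedom to take $\eta<1$ --- must be exploited; for $\alpha\le1/2$ the same mechanism breaks down, which is why the extra hypotheses (b)--(c) of Theorem~\ref{t:SRT-alpha-LE-1/2} cannot be omitted there.
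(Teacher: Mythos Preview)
Your overall strategy matches the paper's: apply Theorem~\ref{t:sum-small-n} and show $\XR(x,\delta)=O(\delta^{2\alpha-1})$, then let $\delta\downarrow0$. Two points of comparison and one genuine gap.

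\textbf{Comparison.} The paper simply takes $L\equiv1$, $\eta=1/2$, $T=0$, so no separate treatment of $n<L(x)$ is needed; your detour through a slowly varying $L$ and the $\max_i X_i$ bound is correct but unnecessary. The paper then bounds $\iof_{1/2}(x,0)=O(x)$ by the telescoping identity
\[
  \int_{x/2}^x \cdf(y+I)\,\dd y
  =\int_{x/2}^{x/2+h}\tail\cdf-\int_x^{x+h}\tail\cdf
  \le h\,\tail\cdf(x/2),
\]
and feeds this into Lemma~\ref{l:R-bound}, using that $[\rvf(s)/s]^2\in\RV_{2\alpha-2}$ with $2\alpha-2>-1$ to get $\int_1^{\delta x}[\rvf(s)/s]^2\,\dd s\asymp\rvf(\delta x)^2/(\delta x)$ and hence $\XR=O(\delta^{2\alpha-1})$.

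\textbf{The gap.} Your ``second elementary input'' conflates two unequal statements. The pointwise bound $\omega(y)\le y$ (from $\cdf(y+I)\le\tail\cdf(y)$) gives only $\iof_\eta(x,0)=O(x^2)$, which is too crude: plugged into Lemma~\ref{l:R-bound} it produces a bound diverging like $x\,\delta^{2\alpha-1}$. What is actually needed is the \emph{integrated} bound $\iof_\eta(x,0)=O(x)$, which is not ``equivalent'' to the pointwise one but comes from the telescoping above, over the fixed window $[(1-\eta)x,x]$. Your heuristic that the average of $\omega$ over a window of width $c\delta x$ is $O(1/\delta)$ is correct as stated, but the effective window in $\Iof_{\eta,n,r}$ has width $\asymp a_n$, which ranges over all scales from $a_{L(x)}$ up to $\delta x$ as $n$ varies in the sum defining $\XR$; there is no single ``relevant window'' of width $\delta x$, so the factorization ``$O(\delta^{-1})\cdot O(\delta^{2\alpha})$'' is not how the bound is obtained. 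The clean route is: bound $\Iof_{\eta,n,r}(t,0)\le\iof_\eta(t,0)=O(t)$ uniformly in $n$, then invoke Lemma~\ref{l:R-bound}. With that correction your argument goes through and coincides with the paper's.
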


Now let $\cdf$ be infinitely divisible with $\levy$ measure $\nu$.
Since $\nu$ is usually much easier to specify than $\cdf$, a question
is whether conditions similar to those on $\iof_\eta(x, T)$ are
available for $\nu$ to guarantee the SRT.  For any $r>0$,
$\nu_r(\cdot) = \nu(\cdot\setminus (-r,r))$ is a finite measure.
Without loss of generality, assume $\nu_1\ne 0$.  Define distribution
function $\cdf_\nu(x) = \nu_1((-\infty, x]) / \nu_1(\Reals)$, and
$\omega_\nu(x)$ and $\iof_{\eta, \nu}(x,T)$ in terms of $\cdf_\nu$
according to \eqref{e:prob-ratio} and \eqref{e:overflow},
respectively.  The next result refines Theorem 2.5 of
\cite{chi:14:aap} and has a much shorter proof.

\begin{theorem} \label{t:renewal-inf-div}
  Let $\alpha\in (0,1/2]$ and $\nu$ satisfy
  \begin{align} \label{e:tail-ind}
    \rvf_\nu(x):=\nth{\nu((x,\infty))} \in \RV_\alpha, \quad x>0
  \end{align}
  and
  \begin{align} \label{e:tail-ratio-ind}
    \tailr :=\lim_{x\toi} \frac{\nu((-\infty, -x])}{\nu((x,\infty))}
    \ \text{ exists and is finite}.
  \end{align}
  Let $L$, $T$ and $\eta$ be as in Theorem  \ref{t:SRT-alpha-LE-1/2}.
  Suppose that instead of \eqref{e:low-cut}, $\cdf_\nu$ satisfies
  \begin{gather} \label{e:low-cut-inf-div}
    \lim_{\rx\to 0} \Lsup_{x\toi}
    \frac{x}{\rvf_\nu(x)} \sum_{n<L(x)} \sup_{|t-x|\le \rx x}
    \cdf_\nu\cvp n(t+I)= 0
  \end{gather}
  as well as \eqref{e:diff2} -- \eqref{e:diff} with all the quantities
  therein replaced with the corresponding ones defined in terms of
  $\cdf_\nu$.  Then, letting $\rvf(x) = \rvf_\nu(x)$ for $x>0$, the
  SRT \eqref{e:renewal} holds for $\cdf$.
\end{theorem}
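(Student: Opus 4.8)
The plan is to reduce the SRT for $\cdf$ to the vanishing of its small-$n$ contribution via Remark~\ref{r:small-n-truncation}, and then to carry that estimate over to $\cdf_\nu$ using the compound Poisson structure of the large jumps of $\cdf$. Write $\cdf = G*H$ by the L\'evy--It\^o decomposition, where $G$ is the compound Poisson law with rate $\lambda := \nu_1(\Reals) = \nu(\Reals\setminus(-1,1))\in(0,\infty)$ and jump distribution $\cdf_\nu = \nu_1/\lambda$, and $H$ is infinitely divisible with L\'evy measure $\nu|_{(-1,1)}$ together with the Gaussian part and drift of $\cdf$. Then $H$ has finite mean $m_H$ and variance $\sigma_H^2$, $\cdf\cvp n = G\cvp n * H\cvp n$, and $G\cvp n = e^{-n\lambda}\sumzi k \frac{(n\lambda)^k}{k!}\cdf_\nu\cvp k$. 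Since $\nu$ is regularly varying, the standard tail asymptotics for infinitely divisible laws give $\tail\cdf(x)\sim\nu((x,\infty)) = 1/\rvf_\nu(x)$ and $\cdf(-x)/\tail\cdf(x)\to\tailr$, so $\cdf$ obeys \eqref{e:tail-RV}--\eqref{e:tail-ratio} with $\rvf = \rvf_\nu$ and the limiting stable law attached to $\cdf$ (hence $\limdf$ and $\ppar$) is the same as the one attached to $\cdf_\nu$. By Remark~\ref{r:small-n-truncation} it therefore suffices to prove
\[
  \lim_{\delta\to0}\ \Lsup_{x\toi}\ x\tail\cdf(x)\sum_{n<\rvf_\nu(\delta x)}\cdf\cvp n(x+I) = 0 .
\]

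First I would peel off $H$. Fix $\delta,\rx>0$ and set $N = \rvf_\nu(\delta x)$. In $\cdf\cvp n(x+I) = \int G\cvp n((x-y)+I)\,H\cvp n(\dd y)$, split the integral at the event $\{|y-nm_H|\le\rx x\}$. On its complement, using $G\cvp n(z+I)\le1$ and Chebyshev, the total over $n<N$ is at most $\sum_{n<N}n\sigma_H^2/(\rx x)^2\le N^2\sigma_H^2/(\rx x)^2$; since $\rvf_\nu\in\RV_\alpha$ with $\alpha<1$, multiplication by $x\tail\cdf(x)$ gives a quantity that vanishes as $x\toi$ for each fixed $\rx,\delta$. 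On the event itself $|y|\le|nm_H|+\rx x\le o(x)+\rx x\le2\rx x$ for $x$ large (because $|nm_H|\le|m_H|\rvf_\nu(\delta x) = o(x)$ uniformly in $n<N$), so $x-y\in[(1-2\rx)x,(1+2\rx)x]$ and the main part is at most $\sum_{n<N}\sup_{|z-x|\le2\rx x}G\cvp n(z+I)$. Inserting the series for $G\cvp n$ and setting $s_k := \sup_{|z-x|\le2\rx x}\cdf_\nu\cvp k(z+I)$ and $C_{k,N} := \sum_{n<N}e^{-n\lambda}(n\lambda)^k/k!$, this is $\le\sumzi k C_{k,N}s_k$, where $\sumzi k C_{k,N} = N$, $C_{k,N}\le M := \sup_k\sumzi n e^{-n\lambda}(n\lambda)^k/k!<\infty$ (the supremand converges to $1/\lambda$, hence is bounded), and $C_{k,N}\le N(e/4)^k$ once $k\ge4\lambda N$ (from $k!\ge(k/e)^k$). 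Hence, using $s_k\le1$ for the tail,
\[
  \sumzi k C_{k,N}s_k \ \le\ M\sum_{k<4\lambda N}s_k \ +\ \frac{N(e/4)^{4\lambda N}}{1-e/4}.
\]

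Multiply by $x\tail\cdf(x)$. The last term vanishes as $x\toi$ for each fixed $\delta$, decaying super-polynomially (since $\rvf_\nu(\delta x)\toi$) against the polynomial growth of $x\tail\cdf(x)$. For the first term, pick $\delta_1 = (1+4\lambda)^{1/\alpha}\delta$, so $\delta_1\to0$ as $\delta\to0$ and $4\lambda\rvf_\nu(\delta x)\le\rvf_\nu(\delta_1 x)$ for $x$ large (regular variation of $\rvf_\nu$), and use $x\tail\cdf(x)\sim\lambda x\tail{\cdf_\nu}(x)$ to bound
\[
  x\tail\cdf(x)\,M\sum_{k<4\lambda N}s_k \ \le\ (1+o(1))\,M\lambda\,x\tail{\cdf_\nu}(x)\sum_{k<\rvf_\nu(\delta_1 x)}\sup_{|z-x|\le2\rx x}\cdf_\nu\cvp k(z+I).
\]
Split this sum at $k = L(x)$. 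The part $k<L(x)$ is, since $x\tail{\cdf_\nu}(x) = x/(\lambda\rvf_\nu(x))$, precisely what \eqref{e:low-cut-inf-div} controls (with its parameter $\rx$ taken to be $2\rx$), so its $\Lsup_{x\toi}$ tends to $0$ as $\rx\to0$. For the part $L(x)\le k<\rvf_\nu(\delta_1 x)$ I would invoke Theorem~\ref{t:sum-small-n} applied to $\cdf_\nu$: the bound it supplies is built from integral functionals of $\omega_\nu$ over ranges $[(1-\eta)x,x]$, hence is insensitive---up to factors tending to $1$---to replacing the argument $x$ by any $z$ with $z/x\to1$; after shrinking $\eta$ a priori and absorbing the $2\rx x$-window into it (legitimate since \eqref{e:diff2}--\eqref{e:diff} for a smaller $\eta$ follow from the stated ones and the target rates are regularly varying, hence stable), the same estimate bounds $\sum_{L(x)\le k<\rvf_\nu(\delta_1 x)}\sup_{|z-x|\le2\rx x}\cdf_\nu\cvp k(z+I)$, and under \eqref{e:diff2}--\eqref{e:diff} for $\cdf_\nu$ its $x\tail{\cdf_\nu}(x)$-scaled $\Lsup_{x\toi}$ tends to $0$ as $\delta_1\to0$, uniformly for small $\rx$. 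Letting first $\rx\to0$ and then $\delta\to0$ (equivalently $\delta_1\to0$) gives the displayed limit, and the SRT for $\cdf$ follows.

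The step I expect to be the main obstacle is exactly this last use of Theorem~\ref{t:sum-small-n}: one must check that its estimate for the mid-range $L(x)\le k<\rvf_\nu(\delta_1 x)$ survives replacing the argument $x$ by a whole $\rx x$-neighborhood---i.e.\ that it depends on $x$ only through spatially robust integrals of $\omega_\nu$, not through the value of $\cdf_\nu\cvp k$ at a single point. This robustness is precisely what forces the hypothesis to replace \eqref{e:low-cut} by the supremum version \eqref{e:low-cut-inf-div}: the small jumps of $\cdf$ smear the large-jump contribution over an $o(x)$-window, so the low range $k<L(x)$ cannot be handled without a neighborhood supremum. Everything else---the Chebyshev tail bound for $H\cvp n$, the elementary facts about the weights $C_{k,N}$, and the regular-variation bookkeeping relating $\delta$ to $\delta_1$---is routine.
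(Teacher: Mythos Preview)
Your approach is essentially the same as the paper's: decompose $\cdf$ via the L\'evy--It\^o splitting into a compound Poisson part with jump law $\cdf_\nu$ and a small-jump part, peel off the small jumps over an $\rx x$-window, bound the Poisson weights $C_{k,N}$ uniformly in $k$, and then apply Theorem~\ref{t:sum-small-n} and Lemma~\ref{l:small-n-truncation} to $\cdf_\nu$. The only technical variations are that you use Chebyshev for $H\cvp n$ where the paper uses exponential Markov (legitimate either way, since $H$ has compactly supported L\'evy measure and hence all exponential moments), and you handle the Poisson tail $k\ge 4\lambda N$ via Stirling where the paper bounds $\pr{N_n>\rvf(M\delta x)}$ directly by exponential Markov.

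Your stated ``main obstacle'' is not actually an obstacle: Theorem~\ref{t:sum-small-n} is formulated with $\sup_{c_0 x\le y\le c_2 x}\cdf\cvp n(y+I)$ on the left-hand side precisely so that it absorbs the $\rx x$-neighborhood---you simply take $c_0=1-2\rx$, $c_2=1+2\rx$ (any $\rx<1/4$ keeps $c_0>1/2$). There is no need to ``shrink $\eta$ and absorb the window into it''; the bound $\XR(x,\delta)$ already carries $c_1,c_2$ as free parameters, and Lemma~\ref{l:small-n-truncation} shows $\Lsup_{x\toi}\XR(x,\delta)=0$ for any admissible $c_1<1\le c_2$ under \eqref{e:diff2}--\eqref{e:diff}. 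This is exactly how the paper invokes it.
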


Both Theorems \ref{t:SRT-alpha-LE-1/2} and \ref{t:renewal-inf-div}
assume the existence of a function $L$ that acts as a cut-off for $n$,
so that the contribution to $\renf(x+I)$ from $n<L(x)$ can be ignored
from the beginning.  It is known that if the SRT holds, then
\eqref{e:low-cut} holds for any $L(x) = o(\rvf(x))$ \cite{chi:14:aap}.
The question is what $L$ can be \emph{a priori\/}, i.e.\ before
knowing whether or not the SRT holds.  By \eqref{e:diff2} and
\eqref{e:diff}, the faster $L$ grows, the weaker the assumptions on
$\iof_\eta(x,T)$ are.  The following result provides a prior lower
bound on the growth of $L$.  Comparing to Proposition 2.3 in
\cite{chi:14:aap}, the result is substantially refined.  In view of
\eqref{e:SRT-omega}, the condition \eqref{e:tail-M} below is nearly
minimal, except for the constraint on $M$.  Also, the convergence in
\eqref{e:low-cut-uniform} is a little stronger than \eqref{e:low-cut}
and \eqref{e:low-cut-inf-div}.
\begin{prop} \label{p:L-prior}
  Let $\alpha \in (0,1/2]$ and $\cdf$ satisfy \eqref{e:tail-RV}.
  Suppose there is a non-decreasing function $M\in \RV_\beta$ with
  $\beta\in [1 - 2\alpha,1]$, such that as $x\toi$,
  \begin{align} \label{e:tail-M}
    \omega(x) \ll x/M(x) = o(\rvf(x)^2).
  \end{align}
  Let $g(x) = \rvf(x)\sqrt{M(x)/x}$.  Then the following are true.
  \begin{enumerate}[topsep=1ex, itemsep=0ex, leftmargin=4ex,
    label={\arabic*})]
  \item  If there is $\gamma>\alpha+\beta$ such that
    $g(x)\gg (\ln x)^\gamma$, then for any $\theta>0$,
    \begin{align}  \label{e:low-cut-uniform}
      \Lsup_{x\toi}
      \frac{x}{\rvf(x)} \sum_{n<L(x)} \sup_{t\ge \theta x}
      \cdf\cvp n(t+I)= 0
    \end{align}
    holds for any non-decreasing $L\in\RV$ that satisfies $L(x) \ll
    g(x)/(\ln x)^\gamma$.
  \item Given $\rx\in (0,1/(1+\alpha+\beta))$, \eqref
    {e:low-cut-uniform} holds for any non-decreasing $L\in\RV$ that
    satisfies $1\ll L(x) \ll g(x)^\rx$.
  \end{enumerate}
\end{prop}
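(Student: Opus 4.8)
Fix $\theta>0$; the goal \eqref{e:low-cut-uniform} reads
\[
  \Lsup_{x\toi}\frac{x}{\rvf(x)}\sum_{n<L(x)}\sup_{t\ge\theta x}\cdf\cvp n(t+I)=0 .
\]
Since $g$ is regularly varying with exponent $\alpha+(\beta-1)/2\le\alpha<1$ and $L$ is dominated by $g$ (by $g^{\rx}$ under 2), by $g/(\ln x)^\gamma$ under 1)), $L\in\RV$ has exponent $<1$ and so $L(x)=o(x)$; all estimates below are for $x$ large. The plan is to reduce the display to a pointwise bound on $\cdf\cvp n(t+I)=\pr{S_n\in t+I}$ for $t\ge\theta x$, $n<L(x)$, namely
\[
  \cdf\cvp n(t+I)\le\frac{C\,n}{\rvf(x)M(x)}\quad\text{if }n=o(\rvf(x)),
  \qquad
  \cdf\cvp n(t+I)\le\frac{C_\delta\,n^{1+\alpha+\beta+\delta}}{\rvf(x)M(x)}\quad\text{for every }\delta>0,
\]
with constants depending only on $\theta$, $\delta$, $\cdf$, $M$, $h$.

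To obtain these I would decompose on the largest jump. Put $M_n=\max_{i\le n}X_i$; from $\{S_n\in t+I\}\subseteq\bigcup_{k\le n}\{X_k=M_n\}$ and exchangeability, $\cdf\cvp n(t+I)\le n\,q_n$ with $q_n=\pr{X_n=M_n,\ S_n\in t+I}$. On this event $t<S_n\le nX_n$, so $X_n>t/n$, and conditionally on $(X_i)_{i<n}$ the variable $X_n$ must fall in the length-$h$ interval $J=(t-S_{n-1},\,t+h-S_{n-1}]$. If $S_{n-1}\le t/2$ then $J\subseteq(\theta x/2,\infty)$, and \eqref{e:tail-M} with \eqref{e:tail-RV} (giving $\cdf(u+I)=O(1/(M(u)\rvf(u)))$ for large $u$), together with $M\rvf\in\RV_{\alpha+\beta}$ and $\alpha+\beta>0$, bound the $\cdf$-mass of $J$, hence this part of $q_n$, by $O(1/(M(x)\rvf(x)))$. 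If $S_{n-1}>t/2$ then $X_n=M_n$ forces every $X_i\le X_n<t/2+h$, so $X_n$ lies at a dyadic scale $X_n\in(t2^{-j-1},\,t2^{-j}]$ for some $j\le\lceil\log_2(2n)\rceil$; then $X_i\le t2^{-j}$ for all $i<n$, $J$ has position $\ge t2^{-j-2}\ge\theta x2^{-j-2}$ so a Potter estimate gives $\cdf(J)=O\bigl(2^{j(\alpha+\beta+\delta)}/(M(x)\rvf(x))\bigr)$, and the residual constraint forces $\sum_{i<n}\min(X_i^+,t2^{-j})>t/2$, i.e.\ a sum of $n-1$ i.i.d.\ variables in $[0,b]$, $b=t2^{-j}$, with mean $\mu=O(b/\rvf(b))$, exceeds $t/2$. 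A Chernoff bound yields probability $\le\bigl(e(n-1)\mu/(t/2)\bigr)^{t/(2b)}$, and since $t/(2b)=2^{j-1}$ and $e(n-1)\mu/(t/2)=O(n/\rvf(x))$ this is $\le2^{-2^{j-1}}$ once $n/\rvf(x)$ is small; as $\sum_{j\ge1}2^{j(\alpha+\beta+\delta)}2^{-2^{j-1}}<\infty$, this part of $q_n$ is again $O(1/(M(x)\rvf(x)))$. Hence the first bound holds when $n=o(\rvf(x))$; dropping the dyadic refinement and using $\cdf(J)\le\sup_{u\ge\theta x/(2n)}\cdf(u+I)=O\bigl(n^{\alpha+\beta+\delta}/(M(x)\rvf(x))\bigr)$ directly gives the second, which needs only $n<L(x)=o(x)$.

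Summation closes both cases. Under 2), $1\ll L(x)\ll g(x)^{\rx}$ with $\rx<1/(1+\alpha+\beta)$, so (since $(2+\alpha+\beta)/(1+\alpha+\beta)<2$) one may fix $\delta>0$ with $\rx(2+\alpha+\beta+\delta)<2$; then the second pointwise bound gives
\[
  \frac{x}{\rvf(x)}\sum_{n<L(x)}\sup_{t\ge\theta x}\cdf\cvp n(t+I)\le\frac{C_\delta}{g(x)^2}\sum_{n<L(x)}n^{1+\alpha+\beta+\delta}\le\frac{C_\delta'\,L(x)^{2+\alpha+\beta+\delta}}{g(x)^2}\longrightarrow0 .
\]
Under 1), $L(x)\ll g(x)/(\ln x)^\gamma$ with $\gamma>\alpha+\beta$, so $L(x)=o(g(x))$ and $L(x)^2/g(x)^2=o((\ln x)^{-2\gamma})$; in the principal case where also $L(x)=o(\rvf(x))$, the first (sharp) bound gives $\frac{x}{\rvf(x)}\sum_{n<L(x)}\sup_{t\ge\theta x}\cdf\cvp n(t+I)\le C\,L(x)^2/g(x)^2\to0$. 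The remaining possibility under 1) is that $M$ has exponent $1$ with $M(x)/x$ unbounded, in which case $L$ may exceed $\rvf$; but then $\omega(x)\le Cx/M(x)$ is small on long ranges, and one feeds that smallness back into the bound on $\cdf(J)$ above, the gap $\gamma>\alpha+\beta$ supplying the room needed.

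The hard part is the case $S_{n-1}>t/2$: the Potter blow-up $2^{j(\alpha+\beta+\delta)}$ of the local mass of $\cdf$ at the intermediate scale $t2^{-j}$ must be overwhelmed by the (doubly-exponential in $2^j$) unlikelihood that the remaining $n-1$ jumps accumulate a mass of order $t$ while each staying $\le t2^{-j}$, and this is effective only when $n/\rvf(x)$ is small — exactly the regime cut out by the growth restrictions on $L$. Making the regular-variation (Potter) comparisons uniform over $1\le n<L(x)$, tracking how their logarithmic losses add over the $\asymp\log n$ dyadic scales, and matching the surviving margin to $\gamma>\alpha+\beta$ (resp.\ $\rx<1/(1+\alpha+\beta)$), is the delicate bookkeeping; so is the border case described above.
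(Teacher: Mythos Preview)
Your approach is essentially correct but takes a genuinely different route from the paper's. The paper's proof chooses a single threshold $T(x)$ (defined implicitly by $\rvf(T(x))M(T(x)) = b(x)\rvf(x)M(x)$ with $b(x)=L(x)/g(x)$), then splits $\pr{S_n\in t+I}$ according to whether $X_{n:1}>T(x)$ or $X_{n:1}\le T(x)$. For the first part it uses the single-interval bound $\cdf(s+I)\ll 1/(\rvf(s)M(s))$ directly; for the second part it invokes the local large-deviation estimate of Lemma~\ref{l:local-ldp} (for case 1)) or simply observes $S_n\le nT(x)<L(x)T(x)=o(x)$, so the event is empty (for case 2)). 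Your argument instead does a full dyadic decomposition on the size of the maximal jump and replaces the Fourier-based Lemma~\ref{l:local-ldp} by an elementary Chernoff/Bennett bound on the truncated positive parts. This is more self-contained (no appeal to characteristic functions) but requires more bookkeeping, in particular uniform Potter estimates across all dyadic scales $j\le\log_2(2n)$.

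Two loose ends in your write-up deserve attention. First, the ``border case'' you worry about --- $\beta=1$ with $M(x)/x$ unbounded --- does not occur: a one-line pigeonhole argument shows that for every $x\gg1$ there is $t\in(x,2x]$ with $\omega(t)\gg1$, whence $x/M(x)\asymp t/M(t)\gg\omega(t)\gg1$, i.e.\ $M(x)/x\ll1$ always (this is the first step of the paper's proof). Once you have this, $g(x)\ll\rvf(x)$ and hence $L(x)=o(\rvf(x))$ in \emph{both} cases 1) and 2), so your ``sharp'' first bound $\cdf\cvp n(t+I)\le Cn/(\rvf(x)M(x))$ applies throughout; the cruder second bound and the separate treatment of case~2) are then unnecessary, since $\sum_{n<L(x)}Cn/(\rvf(x)M(x))\cdot x/\rvf(x)\ll L(x)^2/g(x)^2\to0$ under either hypothesis on $L$. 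Second, in your dyadic step the implicit Potter constant depends on the scale only through the requirement $t2^{-j}\ge x_0$; you should check (and it does hold, since $2^j\le2n<2L(x)=o(x)$) that this is satisfied uniformly over the range of $j$ and $n$ you sum.
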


\begin{example} \label{ex:Density}
  Suppose $\alpha \in (0,1/2)$ and $\omega(x) = O(x^c)$ with $c\in [0,
  2\alpha)$.  Let $M(x) = x^{1-c}$.  By 1) of
  Proposition \ref{p:L-prior}, \eqref{e:low-cut-uniform} holds if
  $L(x) = x^p$ with $p \in [0, \alpha - c/2)$.  Then by Theorem
  \ref{t:SRT-alpha-LE-1/2}, the SRT holds if $\iof_\eta(x,T)=O(x^q)$
  with $q\in [0, 1- c/(2\alpha) + c)$, which implies that on
  $[(1-\eta) x, x]$, the average density of $t$ with $\omega(t) \asymp
  t^c$ can be as high as $(\iof_\eta(x, T)/x^c)/x  \gg
  x^{-c'/2\alpha}$ for any $c'>c$.   This may be compared to Example
  \ref{ex:Williamson}, where the density of $t$ with large $\omega(t)$
  has an exponential decay.  In contrast, 2) of Proposition \ref
  {p:L-prior} only guarantees that \eqref{e:low-cut-uniform} holds for
  $L(x) = x^p$ with $p < (\alpha-c/2)/(2 +  \alpha -c)$.

  On the other hand, if $\omega(x)$ grows almost as fast as
  $\rvf(x)^2$, for example, $\rvf(x)^2/\omega(x) = O(\ln\ln x)$, then
  $g(x) \ll \rvf(x)/ \sqrt{\omega(x)}= O(\sqrt{\ln\ln x})$ and 1) is
  not applicable.  However, since one can set $M(x) \sim x\ln\ln
  x/\rvf(x)^2 \in\RV_{1-2\alpha}$, by 2), \eqref{e:low-cut-uniform}
  holds for $L(x)\asymp (\ln \ln x)^q$ with $q\in [0,
  1/(4-2\alpha))$.
  \qed
\end{example}

In the above example, the asymptotic density of locations with large
values of $\omega$ is obtained with no assumption on how these
locations are distributed on $\Reals$.  It turns out that if the
locations are distributed more regularly, then their asymptotic
density can be higher while still allowing the SRT to hold.
Specifically, given $0<c\le s<\infty$, a set $E$ is said to
(asymptotically) have density $O(x^{-c})$ at scale $x^s$, if for all
$x\gg 1$ and $y\ge x^s$, $|E\cap (x, x+y)| = O(x^{-c} y)$, where
$|\cdot|$ denotes the Lebesgue measure.  For example, given $c\in
(0,1)$, the union of $n^{1/(1-c)}+(0,1)$, $n\ge 1$, has density
$O(x^{-c})$ at scale $x^c$.  Then we have the next result.

\begin{prop} \label{p:density}
  Let $\alpha\in (0,1/2]$ and $c\in (0, 2\alpha)$.  Suppose $\omega(x)
  = O(x^c)$.  If for some $T\in [0,\infty)$ and $c\le s<2\alpha$,
  $E_T:=\{x\ge 0: \omega(x)>T\}$ has density $O(x^{-c})$ at scale
  $x^s$, then the SRT holds.
\end{prop}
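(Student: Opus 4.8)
The plan is to obtain the SRT from Theorem~\ref{t:sum-small-n} rather than from the $\iof_\eta$–criterion of Theorem~\ref{t:SRT-alpha-LE-1/2}, because the density hypothesis controls $\omega$ on \emph{every} scale above $x^s$, which is exactly what is needed to estimate the windowed integrals of $\omega$ that appear in the sharp bound for the small-$n$ contribution. By Remark~\ref{r:small-n-truncation} it suffices to show $\lim_{\delta\to0}\Lsup_{x\toi}x\tail\cdf(x)\sum_{n<\rvf(\delta x)}\cdf\cvp n(x+I)=0$. Once a cut-off $L$ with $L(x)/\rvf(x)\to0$ satisfying \eqref{e:low-cut} is in place (constructed below), Theorem~\ref{t:sum-small-n} bounds the part of this sum with $n\ge L(x)$, up to an error vanishing as $x\toi$ then $\delta\to0$, by a weighted sum of the shape $\frac{C}{\rvf(x)^2}\sum_{L(x)\le n<\rvf(\delta x)}\frac{n}{a_n}\int_{W_n}[\omega(y)-T]^+\,\dd y$, where $W_n$ is an interval around $x$ of width $\asymp a_n$. (Replacing $\int_{W_n}[\omega-T]^+$ by its crude bound $\iof_\eta(x,T)$ in this sum and summing $\sum_{n\ge L(x)}n/a_n$ is what produces \eqref{e:diff2}--\eqref{e:diff}; here we must retain the small window.)

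To set up the cut-off, take $M(x)=x^{1-c}$. It is non-decreasing, lies in $\RV_{1-c}$ with $1-c\in[1-2\alpha,1]$, and satisfies \eqref{e:tail-M}, since $\omega(x)=O(x^c)=x/M(x)$ and $c<2\alpha$ gives $x^c=o(\rvf(x)^2)$. Then $g(x)=\rvf(x)\sqrt{M(x)/x}=\rvf(x)x^{-c/2}\in\RV_{\alpha-c/2}$ has positive index, so $g$ dominates every power of $\ln x$; part 1) of Proposition~\ref{p:L-prior} then yields \eqref{e:low-cut-uniform}, hence \eqref{e:low-cut}, for $L(x)=x^p$ with any fixed $p\in(0,\alpha-c/2)$, and such $L$ also satisfies $L(x)/\rvf(x)\to0$.

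The crux is the estimate on $\int_{W_n}[\omega(y)-T]^+\,\dd y$. For $L(x)\le n<\rvf(\delta x)$ one has $a_n=O(\delta x)$, so $\omega(y)=O(y^c)=O(x^c)$ on $W_n$ and $\int_{W_n}[\omega(y)-T]^+\,\dd y\le(\sup_{W_n}\omega)\,|E_T\cap W_n|=O(x^c\,|E_T\cap W_n|)$. The density hypothesis gives $|E_T\cap W_n|=O(x^{-c}a_n)$ when $|W_n|\ge x^s$, and, enclosing $W_n$ in an interval of length $x^s$ lying near $x$, $|E_T\cap W_n|=O(x^{s-c})$ when $|W_n|<x^s$; in either case $\int_{W_n}[\omega(y)-T]^+\,\dd y=O(a_n+x^s)$. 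Substituting,
\[
\frac{1}{\rvf(x)^2}\sum_{L(x)\le n<\rvf(\delta x)}\frac{n}{a_n}\,O(a_n+x^s)
= O\Grp{\frac{\rvf(\delta x)^2}{\rvf(x)^2}}
+ O\Grp{\frac{x^s}{\rvf(x)^2}\sum_{L(x)\le n<\rvf(\delta x)}\frac{n}{a_n}}.
\]
The first term is $O(\delta^{2\alpha})$ by regular variation of $\rvf$. For the second, $n/a_n\in\RV_{1-1/\alpha}$: if $\alpha\in(0,1/2)$ the index is $<-1$, so $\sum_n n/a_n<\infty$ and $x^s/\rvf(x)^2\in\RV_{s-2\alpha}\to0$; if $\alpha=1/2$ then $\sum_{n<N}n/a_n\asymp u(\rvf^-(N))$ (with $u(x)=\int_1^x(\rvf(s)/s)^2\,\dd s$ as in Theorem~\ref{t:SRT-alpha-LE-1/2}), so with $N=\rvf(\delta x)$ the sum is $\asymp u(x)$ and $x^su(x)/\rvf(x)^2=x^s/k(x)\in\RV_{s-1}\to0$ because $s<2\alpha=1$. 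Thus the weighted sum tends to $0$ as $x\toi$ and then $\delta\to0$, and \eqref{e:small-n-truncation} gives the SRT.

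The main difficulty will be conceptual rather than computational. First, one must apply Theorem~\ref{t:sum-small-n} in its sharp form, keeping the per-$n$ window of width $\asymp a_n$: the $\iof_\eta$–criterion alone is insufficient, because the density hypothesis only yields $\iof_\eta(x,T)=O(x)$ (bound $\sup_{[(1-\eta)x,x]}\omega$ by $O(x^c)$ and $|E_T\cap[(1-\eta)x,x]|$ by $O(x^{1-c})$), and $O(x)$ fails \eqref{e:diff2} for every admissible $L$ once $\alpha<1/2$. Second, one has to recognise that the exponents $c$ and $s<2\alpha$ are calibrated precisely so that the bound $\int_{W_n}[\omega-T]^+=O(a_n+x^s)$ makes the $n$-sum summable when $\alpha<1/2$ and slowly varying of lower order when $\alpha=1/2$; the latter case additionally relies on the Abelian relation $\sum_{n<N}n/a_n\asymp u(\rvf^-(N))$. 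The remaining steps—the windowed estimate itself, the choice of $M$, and the two summations—are routine.
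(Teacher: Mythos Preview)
Your strategy is sound and would succeed, but it differs from the paper's proof in two respects worth noting.

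First, the cut-off $L(x)=x^p$ constructed via Proposition~\ref{p:L-prior} is unnecessary: the paper simply takes $L\equiv 1$, for which \eqref{e:low-cut} is vacuous, and works directly with $\XR(x,\delta)$. Nothing in your summation actually uses $L(x)\to\infty$, so you can drop that machinery.

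Second, your description of the output of Theorem~\ref{t:sum-small-n} as an integral over a sharp window $W_n$ of width $\asymp a_n$ is a heuristic: the actual quantity is $\Iof_{\eta,N_n,r}(t,T)$, an integral over $[(1-\eta)t,t]$ with exponential damping $e^{-(t-y)/(ra_n)}$. To turn your heuristic into the bound $\Iof_{\eta,n,r}(t,T)=O(a_n+t^s)$ you still have to chop $[(1-\eta)t,t]$ into blocks of length $\max(ra_n,t^s)$ and sum the geometric tail. The paper instead chooses the single interpolating block size $\sigma_{n,x}=a_n^{1-s}x^s\ge x^s\vee a_n$, which yields the clean estimate $\Iof_{\eta,n,1}(x,T)\ll x^s a_n^{1-s}$; substituting this and summing $\sum_{n<\rvf(\delta x)}\mean[N_n/a_{N_n}^s]\asymp\rvf(\delta x)^2/(\delta x)^s$ gives $\XR(x,\delta)\ll\delta^{2\alpha-s}$ in one stroke, with no case split on $\alpha<1/2$ versus $\alpha=1/2$. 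Your bound $O(a_n+x^s)$ is in fact sharper than the paper's $x^s a_n^{1-s}$ (since $a_n\le x$), but the paper's choice trades a little sharpness for a unified power-of-$\delta$ conclusion and a shorter argument.
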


Now we consider the SRT for ladder height processes; here $\alpha$ may
be in $[1,2]$.  Denote by $\ldh_n$ the (strict) ascending 
ladder height process of $S_n$.  Since $\cdf$ is the basic
information, it is desirable to find conditions on $\cdf$ that yield
the SRT for $\ldh_n$.  A related issue, the SRT for the ladder time
process, is resolved in \cite{doney:97}.  Denote by $\cdf_+$ the
distribution of $\ldh_1$.  Suppose that under conditions
\eqref{e:tail-RV} and \eqref{e:tail-ratio}, $\cdf$ is in the domain of 
attraction of stable law \emph{without centering\/}.  Then, as
$x\toi$, $\tail \cdf\!_+(x) \sim 1/\rvf_+(x)$ with $\rvf\!_+\in
\RV_{\alpha\ppar}$; actually the asymptotic of $\rvf_+(x)$ can be
obtained, although somewhat implicitly in most cases \cite{rogozin:71,
  greenwood:82b:zw, doney:93:ptrf}.  Denote by $\renf_+$ the renewal
measure for $\ldh_n$.  Define the \emph{weak\/} descending ladder
process of $S_n$ as the weak ascending ladder process of $-S_n$.
Denote by $\cdf\!_-$ and $\renf_-$ the corresponding step distribution
and renewal measure.  Since the ladder steps are non-negative and
$\ldh_n = S_n$ if $\xpp=1$, from \cite{garsia:lamperti:63,
  erickson:70:tams}, one only need consider the case where $\alpha
\ppar\le 1/2$ and $\xpp\in (0,1)$.  In this case, the
maximum possible value of $\alpha$ is 3/2.  The next result refines
Theorem 2.4 of \cite{chi:14:aap} and has a much shorter proof.

\begin{theorem} \label{t:ladder}
  Let $\alpha\in (0,3/2]$ and \eqref{e:tail-RV} --
  \eqref{e:tail-ratio} hold with $\xpp\in (0,1)$, such that $S_n/a_n$
  converges in distribution to a non-zero stable random variable and
  $\alpha\ppar\in (0,1/2]$.  Suppose for some $L\in\RV$ bounded below
  by 1 and constants $T\ge 0$, $\eta\in (0,1]$, the following are true
  as $x\toi$.
  \begin{enumerate}[topsep=1ex, itemsep=0ex, leftmargin=4ex,
    label={\alph*})]
  \item $\tail\cdf\!_+(x) L(x)\to 0$ and 
    \begin{gather} \label{e:low-cut-ladder}
      x\tail\cdf\!_+(x) \sum_{n<L(x)} \pr{\ldh_n\in x+I}\to 0,
    \end{gather}
  \item If $\alpha\ppar\in (0,1/2)$, then
    \begin{align} \label{e:diff2-ladder}
      \iof_\eta(x,T) = o\Grp{
        \lfrac{\rvf_+(x)^2 \rvf_+^-(L(x))}{L(x)^2}
      }
    \end{align}
  \item If  $\alpha\ppar = 1/2$, then letting $u_+(x) = \int_1^x
    \cbfrac{\rvf_+(s)}{s}^2\,\dd s$ and $k_+(x) = \rvf_+(x)^2 /
    u_+(x)$,
    \begin{align}  \label{e:diff2-ladder-2}
      &
      \iof_\eta(x, T)
      =
      \begin{cases}
        \displaystyle
        O(k_+(x)), &
        \displaystyle
        \text{if}\ 
        \lfrac{u_+(x)}{u_+(\rvf_+^-(L(x)))}\to 1
        \\[1.5ex]
        o(k_+(x)), & \text{else.}
      \end{cases}
    \end{align}
  \end{enumerate}
  \comment{
    \begin{gather}
      \iof_\eta(x, T)
      \ll
      \Grp{\rvf(x)^{2\ppar} L(x)^{1/(\alpha\ppar) - 2}}^c.
      \label{e:diff2-ladder}
    \end{gather}
  }
  Then the SRT holds for $\cdf_+$, i.e.\  $x \tail\cdf\!_+(x)
  \renf_+(x+I) \to h \sin(\pi\alpha\ppar)/\pi$ as $x\toi$.
\end{theorem}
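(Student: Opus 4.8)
The plan is to deduce Theorem~\ref{t:ladder} by applying Theorem~\ref{t:SRT-alpha-LE-1/2} to the step law $\cdf\!_+$ of $\ldh_1$. Since $\ldh_n$ is a random walk with step distribution $\cdf\!_+$ and renewal measure $\renf_+$, the asserted convergence is exactly the SRT \eqref{e:renewal} for $\cdf\!_+$. Because $\cdf$ is in the domain of attraction of a stable law without centering, the classical ladder-height asymptotics recalled before the theorem \cite{rogozin:71,greenwood:82b:zw,doney:93:ptrf} give $\tail\cdf\!_+(x)\sim 1/\rvf_+(x)$ with $\rvf_+\in\RV_{\alpha\ppar}$, so \eqref{e:tail-RV} holds for $\cdf\!_+$ with exponent $\alpha\ppar\in(0,1/2]$, while \eqref{e:tail-ratio} holds with $\tailr=0$ because $\cdf\!_+$ is supported on $[0,\infty)$. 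I would first observe that hypothesis~a) of Theorem~\ref{t:SRT-alpha-LE-1/2} for $\cdf\!_+$ (with $\rvf:=\rvf_+$ and the given $L$) is precisely hypothesis~a) here: $L(x)/\rvf_+(x)\to0\iff\tail\cdf\!_+(x)L(x)\to0$, and since the $n$-fold convolution of $\cdf\!_+$ at $x+I$ is $\pr{\ldh_n\in x+I}$ while $x/\rvf_+(x)\sim x\tail\cdf\!_+(x)$, the low-cut condition \eqref{e:low-cut} for $\cdf\!_+$ is \eqref{e:low-cut-ladder}. The bulk of the work is then to produce hypotheses~b)--c) of Theorem~\ref{t:SRT-alpha-LE-1/2} for $\cdf\!_+$, i.e.\ the bounds \eqref{e:diff2}--\eqref{e:diff} with $\rvf$ replaced by $\rvf_+$ and with $\iof_\eta$ replaced by its analogue $\iof_{\eta,+}$ built from $\omega_+(x)=x\cdf\!_+(x+I)/\tail\cdf\!_+(x)$; note the budgets on the right of \eqref{e:diff2-ladder}--\eqref{e:diff2-ladder-2} are already those of \eqref{e:diff2}--\eqref{e:diff} for $\cdf\!_+$. (The constant also matches: with $a_n^+:=\rvf_+^-(n)$ the limit of $\ldh_n/a_n^+$ is one-sided stable of index $\alpha\ppar$, so $\ppar_{\cdf\!_+}=1$ and the right side of \eqref{e:renewal} is $\alpha\ppar\,h\intzi t^{-\alpha\ppar}\limdf(t)\,\dd t=h\sin(\pi\alpha\ppar)/\pi$ by the standard integral $\beta\intzi t^{-\beta}\limdf(t)\,\dd t=\sin(\pi\beta)/\pi$ for the one-sided stable density normalized to have tail $\sim t^{-\beta}$.)

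The bridge between $\omega_+$ and $\omega$ is the Wiener--Hopf factorization: since the oscillating walk has a.s.\ finite ladder epochs, $\cdf\!_+=(\cdf-\cdf\!_-)*\renf_-$, and because $\cdf\!_-$ and $\renf_-$ are carried by $(-\infty,0]$ this gives, with $V$ the reflection of $\renf_-$ onto $[0,\infty)$,
\begin{align*}
  \cdf\!_+(x+I)=\int_{[0,\infty)}\cdf(x+y+I)\,V(\dd y),\qquad
  \tail\cdf\!_+(x)=\int_{[0,\infty)}\tail\cdf(x+y)\,V(\dd y),\quad x>0.
\end{align*}
From $\cdf(x+y+I)=\omega(x+y)\tail\cdf(x+y)/(x+y)\le x^{-1}(T+[\omega(x+y)-T]^+)\tail\cdf(x+y)$ (using $x+y\ge x$) and the second identity I get $[\omega_+(x)-T]^+\le\tail\cdf\!_+(x)^{-1}\int_{[0,\infty)}[\omega(x+y)-T]^+\tail\cdf(x+y)\,V(\dd y)$. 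Integrating over $x\in((1-\eta)x_0,x_0]$, using $\tail\cdf\!_+(x)^{-1}\le\tail\cdf\!_+(x_0)^{-1}$ there, applying Fubini twice (first swap $x$ and $y$ and put $u=x+y$, then swap $u$ and $y$), then bounding the $V$-mass of the resulting window $[(u-x_0)^+,u-(1-\eta)x_0]$ by $\ll V([0,u])\,x_0/u$ (regular variation of $V([0,\cdot])$), and finally inserting $\tail\cdf(u)V([0,u])\ll\tail\cdf\!_+(u)=1/\rvf_+(u)$ (which follows from $\tail\cdf\!_+(u)\ge\tail\cdf(2u)V([0,u])\asymp\tail\cdf(u)V([0,u])$) together with $\tail\cdf\!_+(x_0)^{-1}\asymp\rvf_+(x_0)$, I expect to arrive at
\begin{align*}
  \iof_{\eta,+}(x_0,T)\ \ll\ x_0\,\rvf_+(x_0)\int_{(1-\eta)x_0}^\infty
  \frac{[\omega(u)-T]^+}{u\,\rvf_+(u)}\,\dd u.
\end{align*}

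To finish, let $b$ be the right side of \eqref{e:diff2-ladder} (resp.\ \eqref{e:diff2-ladder-2}); under hypothesis~a) one has $b\in\RV$ with index $\gamma\in(0,1]$ (this uses that the given $L\in\RV$ has index $\le\alpha\ppar$, forced by $\tail\cdf\!_+(x)L(x)\to0$). A geometric summation over the intervals $[(1-\eta)^{k+1}x,(1-\eta)^kx]$ (Potter's bounds) upgrades \eqref{e:diff2-ladder}/\eqref{e:diff2-ladder-2} for one $\eta$ to $\iof_1(x,T)=\int_0^x[\omega-T]^+=o(b(x))$ (resp.\ $O(b(x))$). Then, integrating $\int_{(1-\eta)x_0}^\infty[\omega(u)-T]^+\,\dd u/(u\rvf_+(u))$ by parts against $\iof_1(\cdot,T)$, with $f(u)=1/(u\rvf_+(u))\in\RV_{-(1+\alpha\ppar)}$ and $-f'\asymp f/u$, Karamata's theorem applies (the exponent $\gamma-2-\alpha\ppar$ is $<-1$ since $\gamma\le1$) and gives $\int_{(1-\eta)x_0}^\infty[\omega(u)-T]^+f(u)\,\dd u=o(b(x_0)/(x_0\rvf_+(x_0)))$ (resp.\ $O(\cdot)$), whence $\iof_{\eta,+}(x_0,T)=o(b(x_0))$ (resp.\ $O(b(x_0))$). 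In the $O$-alternative the side condition $u_+(x)/u_+(\rvf_+^-(L(x)))\to1$ is literally the dichotomy condition of Theorem~\ref{t:SRT-alpha-LE-1/2}.c) for $\cdf\!_+$, so all hypotheses of that theorem are met and the SRT for $\ldh_n$ follows.

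The main obstacle is the estimate of the second and third paragraphs. The naive pointwise split $\omega_+\le T+[\omega-T]^+$ is far too wasteful for large $y$ (deep descents of the walk before the first ascending ladder step), and only after interchanging the order of integration, inserting $\tail\cdf(u)V([0,u])\ll1/\rvf_+(u)$, and using that the budget $b$ has regular-variation index at most $1<1+\alpha\ppar$ does the resulting tail integral converge and get reabsorbed into $b(x_0)$; lining up these exponents is exactly what forces the shape of $L$ and of the budgets \eqref{e:diff2-ladder}--\eqref{e:diff2-ladder-2}. A minor point to dispatch separately is lattice bookkeeping---the span of $\cdf\!_+$ when $\cdf$ is arithmetic---which one handles by reading $I$ relative to $\cdf\!_+$.
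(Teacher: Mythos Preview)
Your overall plan matches the paper's: apply Theorem~\ref{t:SRT-alpha-LE-1/2} to $\cdf_+$, and pass from the hypothesis on $\iof_\eta$ to the required bound on $\iof_{\eta,+}$ via the Wiener--Hopf identity $\cdf_+(x+I)=\int \cdf(x+y+I)\,\renf_-(\dd y)$. The verification of hypothesis~(a) and the identification of the limiting constant are fine.

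The gap is in the core estimate. After your second Fubini you assert
\[
  V\bigl([(u-x_0)^+,\,u-(1-\eta)x_0]\bigr)\ \ll\ V([0,u])\,\frac{x_0}{u}
\]
``by regular variation of $V([0,\cdot])$''. But regular variation of an increasing function gives no control on its local increments: for $u\gg x_0$ the claimed inequality is essentially $\renf_-((a,a+\eta x_0])\ll \eta x_0\,\renf_-(a)/a$, which is the SRT for the \emph{descending} ladder distribution $\cdf_-$. Under the hypotheses of the theorem the index $\alpha(1-\ppar)$ can lie in $(0,1/2]$ (e.g.\ $\alpha\le 1$ with $\ppar\ge 1/2$), and then this SRT is not known a~priori---assuming it would be circular. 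The only general bound is subadditivity, $\renf_-((a,a+\eta x_0])\le \renf_-(\eta x_0)$, and with that cruder input your subsequent Karamata step produces an integrand of RV index $\gamma-1-\alpha$, which diverges whenever the budget $b$ has index $\gamma>\alpha$; this does occur for admissible $L$ (already for $L\equiv 1$ one has $\gamma=2\alpha\ppar>\alpha$ as soon as $\ppar>1/2$).

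The paper sidesteps this by \emph{not} performing your second Fubini. After swapping $x$ and $y$ it bounds the inner $x$-integral directly,
\[
  \int_{(1-\eta)x}^{x}[\omega(t+y)-T]^+\,\dd t\ \le\ \iof_\eta(x+y,T)
  \qquad(\text{since }(1-\eta)x+y\ge(1-\eta)(x+y)),
\]
to obtain
\[
  \iof_{\eta,+}(x,T)\ \ll\ x\,\rvf_+(x)\int_0^\infty
  \frac{\iof_\eta(x+y,T)}{(x+y)\,\rvf(x+y)}\,\renf_-(\dd y).
\]
Now the integrand in $y$ is a smooth regularly varying function, so one integration by parts converts $\renf_-(\dd y)$ into $\renf_-(t)\,\dd t$, and only the \emph{global} bound $\renf_-(t)\ll \rvf(t)/\rvf_+(t)$ (the paper's Lemma~\ref{l:ladder-renewal-measure}) is needed---no local regularity of $\renf_-$ at all. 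This route also makes your geometric-summation upgrade from $\iof_\eta$ to $\iof_1$ unnecessary.
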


\begin{remark}
  Under the same conditions, the SRT also holds for the weak ladder
  process. \qed
\end{remark}

Condition \eqref{e:low-cut-ladder} involves the distributions of
$\ldh_n$ that in general are unknown.  Based on Proposition
\ref{p:L-prior}, the next result provides a sufficient condition on
$\cdf$ so that \eqref{e:low-cut-ladder} holds.
\begin{prop} \label{p:L-prior-ladder}
  Suppose there exist $c\in (0,1)$ and a non-decreasing function $M\in
  \RV_\beta$ with $\beta\in [1-2c\alpha\ppar, 1]$, such that for 
  $x\ge 1$, $\omega(x) \ll x/M(x)$.  Let $g(x) = x^{2c\alpha\ppar}
  \sqrt{M(x)/x}$.  Then the following are true.
  \begin{enumerate}[topsep=1ex, itemsep=0ex, leftmargin=4ex,
    label={\arabic*})]
  \item  If there is $\gamma>\alpha\ppar+\beta$ such that
    $g(x)\gg (\ln x)^\gamma$, then \eqref{e:low-cut-ladder} holds for
    any non-decreasing $L\in\RV$ that satisfies $L(x) \ll g(x)/(\ln
    x)^\gamma$.
  \item In any case, \eqref{e:low-cut-ladder} holds if $1\ll L(x) \ll
    g(x)^\rx$, where $\rx$ is an arbitrary number in $(0,
    1/(1+\alpha\ppar+\beta))$.
  \end{enumerate}
\end{prop}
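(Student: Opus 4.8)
The strategy is to transfer the estimate to the underlying walk and then apply Proposition~\ref{p:L-prior}. Write $\tau_n$ for the $n$-th strict ascending ladder epoch of $S_n$, so that $\ldh_n=S_{\tau_n}$, and let $N_m$ be the number of strict ascending ladder epochs in $\Cbr{1,\dots,m}$, so that $\Cbr{N_m<L(x)}=\Cbr{\tau_{\lceil L(x)\rceil}>m}$. Classifying the events $\Cbr{\ldh_n\in x+I}$, $n<L(x)$, by the underlying time index gives
\[
  \sum_{n<L(x)}\pr{\ldh_n\in x+I}
  =\sum_{m\ge1}\pr{S_m\in x+I,\ m\text{ a strict ascending ladder epoch},\ N_m<L(x)},
\]
and one must show that $x\tail\cdf_+(x)$ times the right-hand side tends to $0$.

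I would first discard the terms with a large time index. By the Sparre--Andersen identity, $\tau_{\lceil L(x)\rceil}$ is a sum of $\lceil L(x)\rceil$ i.i.d.\ copies of $\tau_1$ with $\pr{\tau_1>m}\in\RV_{-\ppar}$, so there is $\Lambda\in\RV_{1/\ppar}$ with $\Lambda(x)\gg L(x)^{1/\ppar}$ for which $\pr{\tau_{\lceil L(x)\rceil}>m}$ is negligible once $m\gg\Lambda(x)$. Combined with the uniform Local Limit bound $\pr{S_m\in t+I}\ll h/a_m$, the factor $\pr{m\text{ a ladder epoch}}\asymp m^{\ppar-1}$ (Sparre--Andersen again), and the convergence of $\sum_m m^{\ppar-1}/a_m$ (valid since $a_m\in\RV_{1/\alpha}$ and $\alpha\ppar\le1/2<1$), the terms with $m>\Lambda(x)$ contribute $o(\rvf_+(x)/x)$ after multiplication by $x\tail\cdf_+(x)$ and may be dropped.

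For the main block $m\le\Lambda(x)$ I would run a single--big--jump decomposition of the walk. If every step $X_j$, $j\le m$, is below a cut $\kappa$, taken of order $\delta x$ (or, in the regime where more is needed, as low as a suitable power $x^{c}$), then attaining a ladder record in $x+I$ forces $N_m\ge x/\kappa$ and the event $\Cbr{S_m\ge x,\ \max_{j\le m}X_j<\kappa}$ has (super)polynomially small probability by a moderate--deviation estimate, which stays negligible when summed over $m\le\Lambda(x)$ for $L$ in the stated range. Otherwise some $X_i\ge\kappa$; outside a negligible double--big--jump event (on which the walk is also far below $0$ before time $i$) this step creates a new ladder record and, since $m$ realises the running maximum and is a ladder epoch, $X_i$ is pinned --- given the rest of the path --- to an interval of length $h$, contributing a factor $\cdf((x-a)+I)$ with $a$ at most a fixed fraction of $x$. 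By \eqref{e:prob-ratio} and the hypothesis $\omega(y)\ll y/M(y)$ with $M$ non-decreasing and regularly varying,
\[
  \cdf((x-a)+I)=\frac{\omega(x-a)\,\tail\cdf(x-a)}{x-a}\ll\frac{\tail\cdf(x-a)}{M(x-a)}\ll\frac{\tail\cdf(x)}{M(x)},
\]
up to a constant depending on the cut. Summing this over the location $i$ of the big jump --- whose admissible range is controlled by $\Lambda(x)$ and by $N_i<L(x)$ --- and over $m$ --- the portion of the path after time $i$ yields a ladder displacement whose total is bounded crudely by $\renf_+((0,x])\ll\rvf_+(x)$ --- bounds $x\tail\cdf_+(x)\sum_{n<L(x)}\pr{\ldh_n\in x+I}$ by a regularly varying function of $x$. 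Using $\tail\cdf_+(x)\sim1/\rvf_+(x)$ with $\rvf_+\in\RV_{\alpha\ppar}$ and feeding the emergent truncated walk sum $\sum_{m\le\Lambda(x)}\sup_{t\ge\theta x}\cdf\cvp m(t+I)$ into \eqref{e:low-cut-uniform} of Proposition~\ref{p:L-prior} applied to $\cdf$ (with the same $M$), that function tends to $0$ exactly under the growth restrictions on $L$ stated in parts~1) and~2) in terms of $g(x)=x^{2c\alpha\ppar}\sqrt{M(x)/x}$ and with the constants $\gamma>\alpha\ppar+\beta$ and $\rx\in(0,1/(1+\alpha\ppar+\beta))$; the choice of $\kappa$ and the way the budget $N_m<L(x)$ is split between the pre- and post-jump portions of the path are what turn the factor $\rvf$ of Proposition~\ref{p:L-prior} into the power $x^{2c\alpha\ppar}$ in $g$.

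I expect the main obstacle to be the bookkeeping in this single--big--jump step: the constraint $N_m<L(x)$ bounds the number of ladder epochs, not deterministically the number $m$ of underlying steps, so one must argue carefully that the configurations in which the big jump occurs ``late'' --- after many ladder records or many steps have already accumulated --- are negligible, and one must track precisely the interplay between the walk--step count and the ladder--step count so that the power $2c\alpha\ppar$, and not merely $\alpha\ppar$ or $\alpha$, comes out in $g$. Once this is settled, parts~1) and~2) follow by specializing to parts~1) and~2) of Proposition~\ref{p:L-prior}.
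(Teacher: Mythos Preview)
Your plan is far more elaborate than necessary and contains a genuine obstruction. The paper's proof is essentially three lines: it applies Proposition~\ref{p:L-prior} \emph{directly to the ladder-height distribution $\cdf_+$}, whose tail exponent is $\alpha\ppar\in(0,1/2]$, so that the proposition is applicable as stated. The only thing to verify is that the hypothesis $\omega(x)\ll x/M(x)$ on $\cdf$ lifts to $\omega_+(x):=x\cdf_+(x+I)/\tail\cdf_+(x)\ll x/M(x)$. This is immediate from the Wiener--Hopf identity \eqref{e:renewal-ladder}: since $M$ is non-decreasing,
\[
  \cdf_+(x+I)
  =\intzi\frac{\omega(x+y)\,\tail\cdf(x+y)}{x+y}\,\renf_-(\dd y)
  \ll\intzi\frac{\tail\cdf(x+y)}{M(x+y)}\,\renf_-(\dd y)
  \le\frac{1}{M(x)}\intzi\tail\cdf(x+y)\,\renf_-(\dd y)
  =\frac{\tail\cdf_+(x)}{M(x)},
\]
whence $\omega_+(x)\ll x/M(x)$. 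With $x/M(x)=O(x^{2c\alpha\ppar})=o(\rvf_+(x)^2)$, Proposition~\ref{p:L-prior} for $\cdf_+$ delivers both parts. No descent to the underlying walk, no Sparre--Andersen computation, and no single-big-jump decomposition is needed.

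The obstruction in your route is the final step, where you ``feed the emergent truncated walk sum into \eqref{e:low-cut-uniform} of Proposition~\ref{p:L-prior} applied to $\cdf$''. Proposition~\ref{p:L-prior} is stated and proved only for tail exponent in $(0,1/2]$, whereas in the ladder context of Theorem~\ref{t:ladder} the exponent $\alpha$ of $\cdf$ ranges over $(0,3/2]$. For $\alpha\ge 1$ the walk needs centering and the local large-deviation input Lemma~\ref{l:local-ldp} (which requires $\alpha\in(0,1)$) is unavailable, so the machinery behind Proposition~\ref{p:L-prior} would have to be rebuilt from scratch; even for $\alpha\in(1/2,1)$ the proposition does not apply as written. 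Moreover, your cut-off $\Lambda(x)\gg L(x)^{1/\ppar}$ is of a different order from $L(x)$, so matching the growth hypotheses on $L$ in Proposition~\ref{p:L-prior} to those stated here in terms of $g(x)=x^{2c\alpha\ppar}\sqrt{M(x)/x}$ would require further nontrivial bookkeeping. All of this is bypassed by observing that $\cdf_+$ itself already satisfies the hypotheses of Proposition~\ref{p:L-prior}.
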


In the rest of the paper, Section \ref{s:tool} states bounds for the
small-$n$ contribution.  Section \ref{s:SRT-proofs} proves all the
theorems stated in this section and Section \ref{s:small-n}
establishes the main technical tool for the proofs.  Finally, Section
\ref{s:Prior} proves the propositions stated in this section.

\section{Bounds for small-\emph{n} contribution} \label{s:tool}
Since $\cdf$, $\rvf$ and $h$ are fixed, we expand the meaning of
several asymptotic notations as follows.  If $f$ and $g$ are functions
defined on some set $D$, then $f=O(g)$, $f\ll g$, $g=\Omega(f)$, and
$g\gg f$ all mean $|f(x)| \le C |g(x)|$ for $x\in D$, where $C$ is a
positive constant only depending on $\{\cdf, \rvf, h\}$, and $f =
\Theta(g)$ and $f\asymp g$ both mean $g\ll f\ll g$.  If, for example,
$|f|\le  C|g|$, where $C$ is a constant that depends on parameters
$\eno a n$ in addition to $\{\cdf, \rvf, h\}$, then denote $f=O_{\eno
  a n}(g)$ or $f\ll_{\eno a n} g$.  On the other hand, by $f(x) =
o(g(x))$ as $x\toi$ we mean there is a function $M(\rx)$ that only
depends on $\{\cdf, \rvf, h\}$ such that $|f(x)|\le \rx |g(x)|$ for
all $x\ge M(\rx)$.  Finally, by $f(x)\sim g(x)$ we mean $f(x) =
[1+o(1)] g(x)$.

We will assume without loss of generality that $\rvf$ is continuously
differentiable and strictly increasing on $(0,\infty)$, such that
$\rvf(1)=1$.   Under the assumption, $\rvf^-$ is the regular inverse 
$\rvf^{-1}$ and is continuous and strictly increasing on $[1,
\infty)$.  In particular $a_1 = \rvf^{-1}(1) = 1$.  Still, $a_0=1$.
The following facts will be often used,
\begin{align}
  \rvf^{-1}(t) \asymp a_{\lfloor t\rfloor}
  \asymp a_{\lceil t\rceil}, \quad t\ge 1,
  \label{e:an-inverse-rvf}\\
  \rvf'(s) \sim \alpha \rvf(s)/s, \quad s\toi.
  \label{e:rvf-diff}
\end{align}

Let $X$, $X_n$, $n\ge 1$, be i.i.d.\ $\sim \cdf$, and
$S_0=0$ and $S_n = \cum X  n$, $n\ge 1$.  Denote
\begin{align*}
  S^\pm_n = \sum_{i=1}^n X^\pm_i,\quad
  N_n = \sum_{i=1}^n \cf{X_i>0}.
\end{align*}
Note that $S^\pm_n\ne(\pm S_n)\vee 0$.  Still, $S_n = S^+_n - S^-_n$.
Also, $N_n\sim \dbin(n, \xpp)$, the binomial distribution with
parameters $n$ and $\xpp$.

Henceforth, define
\begin{align*}
  \kappa = \Flr{\lfrac{1}{\alpha}}
\end{align*}
and for $\eta\in (0,1]$, $n\ge 1$, $r>0$,
\begin{align} \label{e:IOF-def}
  \Iof_{\eta,n,r}(x,T)
  =
  \int_{(1-\eta)x}^x  e^{-(x-y)/(r a_n)}
  [\omega(y)- T]^+\,\dd y.
\end{align}

The main tool for the proofs of the SRTs is the following, which only
requires the regular right tail condition for $\cdf$.

\begin{theorem} \label{t:sum-small-n}
  Let $\alpha \in (0,1)$ and $\cdf$ satisfy \eqref{e:tail-RV}.  Then
  for all $L(x)>0$, $T\ge 0$, $\eta\in (0,1)$, $r\in (0,1]$, $1/2\le
  c_1<c_0\le 1\le c_2$, and $0<\delta\ll_{\eta,r} 1$,
  \begin{align*}
    &
    \Lsup_{x\toi}
    \frac{x}{\rvf(x)} \sum_{L(x)\le n <\rvf(\delta x)}
    \sup_{c_0 x\le y\le c_2 x} \cdf\cvp n(y + I)
    \\
    &\le 
    \Lsup_{x\toi} \XR(x, \delta)
    + O(\delta^{2\alpha})T + o(\delta^\alpha),
  \end{align*}
  where $o(\delta^\alpha)$ is in the sense of $\delta\to 0+$ and,
  writing $x_n = x+S^-_n$,
  \begin{align} \label{e:sum-small-n-R}
    &
    \XR(x,\delta)
    =
    \frac{x}{\rvf(x)} \sum_{L(x)\le n <\rvf(\delta x)}
    \mean\Sbr{
      \frac{N_n}{a_{N_n}}
      \frac{\tail\cdf(x_n)}{x_n}
      \sup_{c_1 x_n/\kappa \le t \le c_2 x_n + 2h}
      \Iof_{\eta,N_n,r}(t, T)
    }.
  \end{align}
\end{theorem}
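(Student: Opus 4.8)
The plan is to decompose each convolution power $\cdf\cvp n(y+I)$ by conditioning on the negative parts of the steps, $S^-_n$, and on the number of positive steps, $N_n$, so that the remaining randomness is a sum of i.i.d.\ positive random variables, to which a local large deviation (LLD) estimate can be applied. Concretely, writing $X_i = X^+_i - X^-_i$, conditionally on the signs and on $S^-_n = s$, the sum $S_n$ equals $(\sum_{i:\,X_i>0} X^+_i) - s$, where the number of positive terms is $N_n$; so $\cdf\cvp n(y+I)$ becomes an expectation over $N_n$ and $S^-_n$ of the probability that a sum of $N_n$ i.i.d.\ copies of $X$ conditioned to be positive lands in $(y+s)+I$. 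The point of introducing $x_n = x + S^-_n$ is exactly that $y + S^-_n$ ranges over roughly $[c_0 x_n, c_2 x_n]$ when $y$ ranges over $[c_0 x, c_2 x]$, so the target interval for the positive sum is pinned down in terms of $x_n$.

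The core step is a local large deviation bound for $\pr{\Sigma_m \in (t, t+h]}$ where $\Sigma_m$ is the sum of $m$ i.i.d.\ copies of a positive random variable with tail $\sim 1/\rvf$, valid in the large-deviation regime $m < \rvf(\delta t)$ for small $\delta$. The standard ``one big jump'' heuristic says the dominant contribution comes from configurations where one of the $m$ summands is of order $t$ and is responsible for the final increment, which produces a leading term of size $\asymp m\,\tail\cdf(t)/t$ times a factor $[\omega(\cdot)-T]^+$-type integral accounting for the possibility that the ``big jump'' lands slightly short and is completed by a moderate excess; the role of the exponential weight $e^{-(x-y)/(r a_n)}$ in $\Iof_{\eta,n,r}$ is to encode, via a Chebyshev/large-deviation bound on the truncated sum of the non-big summands at scale $a_n$, how far back from $t$ the big jump can plausibly occur. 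I expect to prove this LLD estimate by splitting each summand at a truncation level $\asymp a_m$ (or $\asymp a_{N_n}$), handling the truncated part by a second-moment or exponential Markov bound (this is where the $N_n/a_{N_n}$ prefactor and the $e^{-(x-y)/(ra_n)}$ kernel come from), and handling the untruncated part — the big jumps — by a union bound over which summand is large, reducing to a single convolution of $\cdf$ restricted above the truncation level with the (bounded) law of the remaining sum; the $\kappa = \lfloor 1/\alpha\rfloor$ cutoff $c_1 x_n/\kappa$ reflects that one may need up to $\kappa$ comparably large jumps rather than a single one, since $\alpha$ small means a single jump of order $t$ has probability too small and $\kappa$ jumps each of order $t/\kappa$ may dominate. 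Summing the resulting bound over $L(x)\le n < \rvf(\delta x)$ and multiplying by $x/\rvf(x)$ gives the $\XR$ term; the genuinely ``error'' contributions — the truncated-part probabilities that are not captured by the big-jump picture, and the crude bounds when $\omega(y)\le T$ so $[\omega(y)-T]^+=0$ but $\cdf(y+I)$ is still positive — are collected into the $O(\delta^{2\alpha})T + o(\delta^\alpha)$ remainder, using $\sum_{n<\rvf(\delta x)} m\,\tail\cdf(x)/x \cdot (\text{const})$ and Karamata-type estimates $\sum_{n<\rvf(\delta x)} 1 \asymp \rvf(\delta x) \asymp \delta^\alpha \rvf(x)$, together with $\sum_{n<\rvf(\delta x)} n \asymp \rvf(\delta x)^2 \asymp \delta^{2\alpha}\rvf(x)^2$ for the term carrying the explicit $T$.

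The main obstacle I anticipate is making the LLD estimate uniform enough: it must hold simultaneously for all $n$ in the range $[L(x),\rvf(\delta x))$, for all target points $t$ in a window of size $\asymp x_n$, and with constants depending only on $\{\cdf,\rvf,h\}$ and not on $\delta,\eta,r$ — while still producing, after summation over $n$, a remainder that is genuinely $o(\delta^\alpha)$ as $\delta\to0$ rather than merely $O(\delta^\alpha)$. This forces careful bookkeeping of which terms scale like $\delta^\alpha$ with a constant that can be absorbed into $\XR$ versus which are lower order; in particular the passage from the discrete sum over the (random) $N_n\sim\dbin(n,\xpp)$ to the deterministic bound requires controlling the event that $N_n$ deviates substantially from $\xpp n$ — on that event one uses a crude bound, and one must check its contribution is negligible — and the passage from $S^-_n$ (which is itself in the domain of attraction of a stable law of the same index, hence of order $a_n$ typically but with a heavy tail) to the stated bound requires that the supremum over $c_1 x_n/\kappa \le t\le c_2 x_n + 2h$ already absorb the fluctuations of $x_n$ around $x$, so that no separate large-deviation control of $S^-_n$ is needed beyond what is implicit in writing everything in terms of $x_n$. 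Handling the regular-variation (rather than pure power) form of $\rvf$ throughout — so that e.g.\ $\rvf(\delta x)/\rvf(x)\to\delta^\alpha$ only in the limit — will require invoking uniform convergence of regularly varying functions (Potter bounds) at each step, which is routine but must be threaded consistently.
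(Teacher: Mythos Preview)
Your overall architecture matches the paper's: condition on $(N_n, S^-_n)$ to reduce to a sum of i.i.d.\ positive variables hitting $x_n + I$, then apply a local large deviation analysis driven by the big-jump heuristic. But there is a genuine gap in the LLD step. You propose a single truncation at level $\asymp a_{N_n}$, with the small part yielding the exponential kernel and the large part handled ``by a union bound over which summand is large, reducing to a single convolution of $\cdf$ restricted above the truncation level''. The problem is that after truncating at $r a_n$, the number of ``big'' terms (those $> r a_n$) is Poisson-like, not one, and nothing in your plan forces any individual term to be of order $x$: the union bound would give a convolution of $\cdf$ restricted to $(r a_n,\infty)$ against the law of the remaining (still heavy-tailed) sum, which is itself a local large deviation problem and does not directly produce the factor $\tail\cdf(x_n)/x_n$ or localize the integrand of $\Iof_{\eta,N_n,r}$ to $[c_1 x_n/\kappa,\, c_2 x_n + 2h]$.

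The paper closes this gap with a \emph{two-scale} decomposition. It introduces an intermediate threshold $\zeta_{n,x} = a_n^{1-\gamma} x^\gamma$ with $\gamma \in \bigl((\alpha(\kappa+1))^{-1},\,1\bigr)$, and first shows (Lemmas \ref{l:many-large-jumps} and \ref{l:small-sum}) that, up to errors of order $o(\delta^\alpha)$, there are at most $\kappa$ jumps exceeding $\zeta_{n,x}$ and these jumps together sum to at least $(1-\rx)x$. The second of these uses the exponential LLD bound of Lemma \ref{l:local-ldp} applied to the sum of sub-$\zeta_{n,x}$ terms, and it is precisely here that the choice $\zeta_{n,x} \gg a_n$ (so that $x/\zeta_{n,x} \to \infty$) is essential. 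Only once this is in hand does one know the largest jump exceeds $(1-\rx)x/\kappa$; then the finer $r a_n$-truncation (Lemma \ref{l:large-sum}) extracts the exponential kernel and the $\Iof_{\eta,N_n,r}$ integral, with the singled-out jump now guaranteed to lie in $(c x,\infty)$ for $c = (1-\rx)/\kappa$. Without the intermediate scale you cannot separate ``how many macroscopic jumps there are'' from ``how large the biggest jump is'', and the argument stalls at the point where you need the distinguished term to land near $x$.
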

\begin{remark}
  In the bound, the term $O(\delta^{2\alpha}) T$ is separate from
  $o(\delta^\alpha)$ because the latter is independent of $T$.
\end{remark}

On the other hand, integrals involving $\omega$ can also provide a
lower bound for the small-$n$ contribution as follows.
\begin{prop} \label{p:sum-small-n-low}
  Let $\alpha\in(0,1)$ and $J = (0,2h]$.  Then given compact interval
  $E$ with $\inf_E \limdf>0$, for all $0<\delta\ll_E 1$,
  $n_0\gg_E 1$, and $x \gg_E 1$,
  \begin{align*}
    \frac{x}{\rvf(x)} \sum_{n <\rvf(\delta x)}
    \cdf\cvp n(x + J)
    \ge
    \frac{1}{4\rvf(x)^2} \sum_{n_0\le n<\rvf(\delta x)}
    n\int_E
    \limdf(t) \omega(x-a_n t)\,\dd t,
  \end{align*}
  where $h>0$ is arbitrary if $\cdf$ is non-lattice, and the span of
  $\cdf$ otherwise.
\end{prop}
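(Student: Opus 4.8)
The plan is to establish the inequality by the classical ``single big jump'' lower estimate for $\cdf\cvp n(x+J)=\Pr{S_n\in x+J}$: the factor $n$ in the right-hand sum will come from the $n$ possible positions of the large step, and the density $\limdf$ from a Local Limit Theorem applied to the sum of the remaining $n-1$ steps.

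First I would fix $\delta$ small (depending on $E$) and $x$ large, so that by \eqref{e:an-inverse-rvf} one has $a_n\le 2\rvf^{-1}(\rvf(\delta x))\ll_E\delta x$ for all $n<\rvf(\delta x)$; in particular $x/2$ lies strictly between $a_n(\sup E)^+$ and $x-a_n(\sup E)^+$. For $1\le n<\rvf(\delta x)$ and $1\le i\le n$, let $A_i$ be the event that $S_n\in x+J$, $X_i>x/2$, and $X_j\le x/2$ for every $j\ne i$. The $A_i$ are pairwise disjoint, so by exchangeability $\cdf\cvp n(x+J)\ge\sum_{i\le n}\Pr{A_i}=n\Pr{A_1}$. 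Writing $\Sigma=X_2+\cdots+X_n$, on $\{S_n\in x+J\}\cap\{\Sigma\in a_{n-1}E\}\cap\{X_j\le x/2\ \forall j\ge2\}$ one automatically has $X_1=S_n-\Sigma>x/2$, so conditioning on $X_1$ (note $\cdf((x-\Sigma,\,x-\Sigma+2h])=\Pr{X_1\in x+J-\Sigma\mid\Sigma}$) gives $\Pr{A_1}\ge M_n-D_n$, where
\begin{gather*}
  M_n=\Pr{S_{n-1}\in a_{n-1}E,\ S_n\in x+J},\\
  D_n=\Pr{S_{n-1}\in a_{n-1}E,\ \exists\,j\ge2\colon X_j>x/2,\ S_n\in x+J}.
\end{gather*}

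For $M_n$ I would condition on the last step and restrict to $v$ with $(x-v,x-v+2h]\subseteq a_{n-1}E$, then apply Stone's Local Limit Theorem (Gnedenko's in the lattice case, with $h$ the span, so that each length-$h$ window carries a single atom of $S_{n-1}$): for $n-1\ge n_0(E)$, $\Pr{S_{n-1}\in(x-v,x-v+2h]}\ge(1-\epsilon)\tfrac{2h}{a_{n-1}}\limdf((x-v)/a_{n-1})$ uniformly in the relevant $v$. A Fubini computation then identifies $\tfrac{2h}{a_{n-1}}\int\limdf((x-v)/a_{n-1})\cdf(\dd v)$ with $2\int_E\limdf(t)\cdf((x-a_{n-1}t)+I)\dd t$ up to an error involving only the two endpoints of $E$; together with $\cdf((x-s)+I)=\tail\cdf(x-s)\,\omega(x-s)/(x-s)\ge(1-C\delta)\tail\cdf(x)\,\omega(x-s)/x$ on $\{|s|\ll_E\delta x\}$ (from \eqref{e:tail-RV} and monotonicity of $\tail\cdf$), this yields $M_n\ge(2-\epsilon')(1-C\delta)\tfrac{\tail\cdf(x)}{x}\int_E\limdf(t)\omega(x-a_{n-1}t)\dd t-(\mathrm{edge}_n)$. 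Multiplying $\Pr{A_1}\ge M_n-D_n$ by $n$ and by $x/\rvf(x)$, summing over $n$, reindexing $m=n-1$, and using $\tail\cdf(x)\ge(1-\epsilon)/\rvf(x)$, the main contribution is at least a constant --- which exceeds $\tfrac14$ once $\delta$ and $\epsilon$ are small --- times $\rvf(x)^{-2}\sum_{m\ge n_0}m\int_E\limdf(t)\omega(x-a_mt)\dd t$, so it remains only to show that $\tfrac{x}{\rvf(x)}\sum_n n\bigl[(\mathrm{edge}_n)+D_n\bigr]$ is dominated by it.

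This last step is where the real work is. On the event in $D_n$, two of the $n$ steps are of order $x$ (the one landing near $x$ and the extra step $X_j$) while the remaining $n-2$ must sum to $\approx-x$; a union bound over $j$ together with a one-big-jump (local large deviation) bound for $S_{n-2}$ deep in its left tail --- of exactly the type already used here and in \cite{doney:97,denisov:08:ap} --- reduces $D_n$ to a self-convolution of $\cdf$ that must be estimated \emph{not} by $\sup\{\omega(y):y\in[x(1-C\delta),x(1+C\delta)]\}$, which can be arbitrarily large, but in terms of the integral of $\omega$ over that window; only then does $\sum_n nD_n$, which inherits an extra $a_n^{-1}$-type decay in $n$ from the left-tail estimate, come out of smaller order than the main term $\asymp\rvf(x)^{-2}\sum_n n\int_E\limdf(t)\omega(x-a_nt)\dd t$ once $\delta$ is small. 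The edge terms are controlled the same way and are of lower order. I expect these estimates to be routine in spirit but delicate in execution --- which is precisely why the proposition is stated with the loose constant $\tfrac14$ and the ``$\gg_E$'' qualifiers on $\delta$, $n_0$, and $x$.
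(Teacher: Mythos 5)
Your big-jump decomposition and the appeal to a local limit theorem share DNA with the paper's proof, but the two arguments diverge at a critical step, and the divergence leaves a real gap.

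The paper does not use the vanilla Stone LLT followed by subtraction of a bad event. Instead it chooses a window $G=(c,d)\supset E$ with $\inf_G\limdf>0$, sets $c_n=ca_n$, $d_n=da_n$, and uses the disjoint events $\{X_i>x-d_n,\ X_j\le\zeta_{n,x}\ \forall j\ne i\}$ with \emph{two different thresholds} $x-d_n$ (for the big jump) and $\zeta_{n,x}=a_n^{1-\gamma}x^\gamma$ (for the small steps). Then it decomposes along the value of $S_{n-1}\in jh+I$ with $c_n<jh<d_n$ and applies Lemma~\ref{l:local-lower-bound} --- a \emph{truncated} local limit theorem proved specifically for this purpose, giving a lower bound on $\pr{S_{n-1}\in jh+I,\ X_{n-1:1}\le\zeta_{n,x}}$ directly. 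Because the truncation $X_{n-1:1}\le\zeta_{n,x}$ is built into the LLT estimate, there is no ``$D_n$'' term to subtract at all; after the LLT the proof is pure Riemann-sum bookkeeping. You instead apply the untruncated Stone LLT to $\Sigma$, which overcounts events with a second large step, and then have to subtract
\begin{align*}
  D_n=\Pr{S_{n-1}\in a_{n-1}E,\ \exists\,j\ge2\colon X_j>x/2,\ S_n\in x+J},
\end{align*}
and you explicitly say this is ``where the real work is'' and only sketch a plan.

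That sketch does not close the gap. Controlling $D_n$ means estimating a two-big-jump event in which $X_1\approx x$, some $X_j>x/2$, and the remaining $n-2$ steps sum to about $-x/2$. To compare this to $M_n$ --- which, as you note, cannot be done by bounding $\sup\omega$ on the window --- you would need a \emph{local large deviation bound far in the left tail of $S_{n-2}$, uniform in the window and compatible with retaining the $\omega$-integral over the same $[x-d_n,x-c_n]$ window that appears in $M_n$}. This in turn pulls in the left tail of $\cdf$, hence the tail-ratio condition \eqref{e:tail-ratio}, and requires checking that the extra decay (you assert an ``extra $a_n^{-1}$-type decay'') really makes $\sum_n nD_n\cdot x/\rvf(x)$ of smaller order than the target $\rvf(x)^{-2}\sum_n n\int_E\limdf(t)\omega(x-a_nt)\,\dd t$ uniformly in $x$. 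None of this is carried out, and it is not routine: the paper's Lemma~\ref{l:local-lower-bound} exists precisely so that this comparison never has to be made. So the proposal is a genuinely different route, not just a rephrasing, and the hard half of that route --- the $D_n$ estimate --- is left open. You should either prove the analogue of Lemma~\ref{l:local-lower-bound} (a Stone/Gnedenko LLT for the truncated sum) and rewrite the argument without $D_n$, or supply the full local large deviation estimate for $D_n$ including the $\omega$-weighted comparison.
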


The next example shows an application of the lower bound.
\begin{example} \rm \label{ex:Williamson-2}
  Suppose the $\cdf$ in Example \ref{ex:Williamson} is modified as
  \begin{align*}
    \cdf\{n\}
    =
    \begin{cases}
      C 2^{-k/2} / b^\pm_k
      & \text{$n=\pm 2^k$ for some $k\in\Nats$}
      \\
      C |n|^{-3/2} g(n) & \text{otherwise}
    \end{cases}
  \end{align*}
  where $g\in \RV_0$ and $g(-x)/g(x)\to \tailr\in [0,\infty)$ as
  $x\toi$.  Following the argument in Example \ref{ex:Williamson}, it
  can be seen that for $n\ge 2$ and $x\in [n-1, n)$, $\rvf(x) \asymp
  \sqrt{x}/g(x)$ and
  \begin{align*}
    \omega(x)
    \asymp
    \begin{cases}
      1 & n\ne 2^k \\
      x/(b_k g(x)) & \text{otherwise}
    \end{cases}
  \end{align*}
  and if $u(\infty-)<\infty$, where $u(x) = \int_1^x t^{-1} g(t)^{-2}
  \,\dd t$, then the SRT holds $\Iff g(2^k) = o(b_k)$ as $k\toi$.  Of
  interest is the case where $u(\infty-)=\infty$.  By Theorem
  \ref{t:SRT-alpha-LE-1/2}, if
  \begin{align}  \label{e:Williamson-2}
    g(2^k) = o(b_k/u(2^k)), \quad k\toi,
  \end{align}
  then the SRT holds.  We next show that if $\tailr>0$, then
  \eqref{e:Williamson-2} is also a necessary condition for the SRT,
  which implies that the condition $\omega(x) = o(\rvf(x)^2)$ in
  \eqref{e:SRT-omega}  is not sufficient.  Denote $G(x,\delta)
  = \cbfrac{x}{\rvf(x)} \sum_{n < \rvf(\delta x)} \cdf\cvp n(x+J)$,
  where $J=(0,2h]$.  Since $\inf_{[0,1]}\limdf>0$, by Proposition
  \ref{p:sum-small-n-low}, there is $n_0\gg 1$, such that for
  $0<\delta \ll 1$, $x\gg 1$, and $T>0$,
  \begin{align*}
    G(x,\delta)
    &\gg
    \frac{1}{\rvf(x)^2} \sum_{n_0\le n<\rvf(\delta x)}
    n\int_0^1
    \omega(x-a_n t)\,\dd t
    \\
    &\gg
    \frac{g(x)^2}{x} \sum_{n_0\le n<\rvf(\delta x)}
    \frac{n}{a_n}\int_{x-a_n}^x
    [\omega(t)-T]^+\,\dd t.
  \end{align*}
  Set $T>0$ large enough.  Suppose there are
  infinitely many $k\in\Nats$, such that $\omega(2^k)\toi$; otherwise
  both the SRT and \eqref{e:Williamson-2} hold.  For such $x=2^k\gg 1$
  and $n_0\le n < \rvf(\delta x)$, by $x/2 < x-a_n < x-1$,
  \begin{align*}
    G(x,\delta)
    \gg
    \frac{g(x)^2}{x} \sum_{n_0\le n<\rvf(\delta x)} 
    \frac{n}{a_n} \Sbr{\frac{x}{b_k g(x)} - T}^+
    \asymp
    \frac{g(2^k)}{b_k} \sum_{n_0\le n<\rvf(\delta x)} n/a_n.
  \end{align*}
  From \eqref{e:an-inverse-rvf} and \eqref{e:rvf-diff},
  \begin{align*}
    \sum_{n_0\le n<\rvf(\delta x)} \lfrac{n}{a_n}
    \asymp
    \int_{n_0}^{\rvf(\delta x)}
    \frac{t\,\dd t}{\rvf^{-1}(t)}
    =
    \int_{\rvf^{-1}(n_0)}^{\delta x}
    \frac{\rvf(s)\,\dd \rvf(s)}{s}
    \asymp
    u(\delta x).
  \end{align*}
  Then by $u\in \RV_0$, $G(2^k,\delta) \gg \lfrac{g(2^k) u(2^k)}
  {b_k}$.  Then by \eqref{e:small-n-truncation}, it is seen that
  \eqref{e:Williamson-2} is a necessary condition for the SRT.
\end{example}

\section{Proofs of SRTs} \label{s:SRT-proofs}
To prove the SRTs in Section \ref{s:intro}, we only need a
relaxed version of Theorem \ref{t:sum-small-n}.
\begin{lemma} \label{l:R-bound}
  Let $L(x)\ge 1$ and $L(x)=o(\rvf(x))$ as $x\toi$.  Fix $\eta\in
  (0,1)$, $T\ge 0$, $r\in (0,1]$, and $1/2\le c_1<1\le c_2$.  Then
  given $\delta \in (0,1)$, for $x\gg 1/\delta$,
  \begin{align*}
    \XR(x, \delta)
    \ll 
    \frac{x}{\rvf(x)} 
    \sup_{t\ge c_1 x/\kappa} \frac{\iof_\eta(t,T)}{t \rvf(t)}
    \int_{\rvf^{-1}(L(x))}^{\delta x} 
    \cbfrac{\rvf(s)}{s}^2\,\dd s.
  \end{align*}
\end{lemma}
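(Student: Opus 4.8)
\emph{Proof proposal.} The plan is to strip $\XR(x,\delta)$, as given by \eqref{e:sum-small-n-R}, of its dependence on the pair $(N_n,x_n=x+S^-_n)$ down to a single elementary factor, and then to recognize the remaining sum over $n$ as a Riemann sum for $\int_{\rvf^{-1}(L(x))}^{\delta x}\cbfrac{\rvf(s)}{s}^2\,\dd s$. Throughout, $\alpha\in(0,1)$ is the exponent from \eqref{e:tail-RV}.

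First I would bound the kernel in \eqref{e:sum-small-n-R}. Since $e^{-(t-y)/(ra_{N_n})}\le1$, discarding this weight in \eqref{e:IOF-def} gives $\Iof_{\eta,N_n,r}(t,T)\le\iof_\eta(t,T)$ for every $t>0$. For $t$ in the range $c_1x_n/\kappa\le t\le c_2x_n+2h$ appearing in \eqref{e:sum-small-n-R}, I write $\iof_\eta(t,T)=\dfrac{\iof_\eta(t,T)}{t\rvf(t)}\cdot t\rvf(t)$ and estimate the two factors separately. Because $x_n\ge x$, the point $t$ lies in $[c_1x/\kappa,\infty)$, so the first factor is at most $\sup_{t\ge c_1x/\kappa}\iof_\eta(t,T)/(t\rvf(t))$; and since $y\mapsto y\rvf(y)\in\RV_{1+\alpha}$ and $t\le c_2x_n+2h$ with $x_n\gg1$, regular variation gives $t\rvf(t)\ll x_n\rvf(x_n)$ uniformly over that range. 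The factor $\tail\cdf(x_n)/x_n$ in \eqref{e:sum-small-n-R} then cancels against $x_n\rvf(x_n)$ to leave $\tail\cdf(x_n)\rvf(x_n)$, and I arrive at
\begin{align*}
  \XR(x,\delta)\ll\frac{x}{\rvf(x)}\Grp{\sup_{t\ge c_1x/\kappa}\frac{\iof_\eta(t,T)}{t\rvf(t)}}\sum_{L(x)\le n<\rvf(\delta x)}\mean\Sbr{\frac{N_n}{a_{N_n}}\,\tail\cdf(x_n)\,\rvf(x_n)}.
\end{align*}
By \eqref{e:tail-RV}, $\tail\cdf(y)\rvf(y)\to1$ as $y\toi$, so $\tail\cdf(x_n)\rvf(x_n)\ll1$ since $x_n\ge x\gg1$, and the expectation is $\ll\mean[N_n/a_{N_n}]$.

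It then remains to prove $\sum_{L(x)\le n<\rvf(\delta x)}\mean[N_n/a_{N_n}]\ll\int_{\rvf^{-1}(L(x))}^{\delta x}\cbfrac{\rvf(s)}{s}^2\,\dd s$. Since $m\mapsto m/a_m=m/\rvf^{-1}(m)$ lies in $\RV_{1-1/\alpha}$, of negative index, I would split according to whether $N_n\ge\xpp n/2$: on this event a Potter bound gives $N_n/a_{N_n}\ll n/a_n$, while on its complement $N_n/a_{N_n}\ll1$ and $\pr{N_n<\xpp n/2}\ll e^{-cn}$ by Chernoff's inequality, so that $\mean[N_n/a_{N_n}]\ll n/a_n+e^{-cn}$. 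For the main term, \eqref{e:an-inverse-rvf}, \eqref{e:rvf-diff} and the change of variables $s=\rvf^{-1}(t)$ give, exactly as in Example \ref{ex:Williamson-2},
\begin{align*}
  \sum_{L(x)\le n<\rvf(\delta x)}\frac{n}{a_n}\asymp\int_{L(x)}^{\rvf(\delta x)}\frac{t\,\dd t}{\rvf^{-1}(t)}=\int_{\rvf^{-1}(L(x))}^{\delta x}\frac{\rvf(s)\,\rvf'(s)}{s}\,\dd s\asymp\int_{\rvf^{-1}(L(x))}^{\delta x}\cbfrac{\rvf(s)}{s}^2\,\dd s,
\end{align*}
while $\sum_{n\ge L(x)}e^{-cn}\ll e^{-cL(x)}$. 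To see that this residual is absorbed, note that $L(x)=o(\rvf(x))$ forces $\rvf^{-1}(L(x))=o(x)$, so for $x$ large relative to $\delta$ (the role of $x\gg1/\delta$) one has $2\rvf^{-1}(L(x))\le\delta x$, whence the integral above is at least $\int_{\rvf^{-1}(L(x))}^{2\rvf^{-1}(L(x))}\cbfrac{\rvf(s)}{s}^2\,\dd s\asymp L(x)^2/\rvf^{-1}(L(x))$, a regularly varying function of $L(x)$ and hence $\gg e^{-cL(x)}$. Combining the two displays yields the claim.

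The main obstacle is the last paragraph: because $m\mapsto m/a_m$ has negative index, the weight $N_n/a_{N_n}$ is largest exactly when $N_n$ is atypically small, so one must quantify that the binomial does not stray far below its mean $\xpp n$, and --- more delicately --- verify that the exponentially small residual and the ``boundary-layer'' terms near $n=L(x)$ are genuinely dominated by the target integral; this is precisely where the hypotheses $L(x)\ge1$, $L(x)=o(\rvf(x))$ and $x\gg1/\delta$ enter. The kernel reduction in the first part, by contrast, is routine.
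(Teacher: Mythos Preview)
Your proof is correct and follows essentially the same route as the paper's: bound $\Iof_{\eta,N_n,r}\le\iof_\eta$, use $\tail\cdf(x_n)/x_n\asymp 1/(t\rvf(t))$ on the relevant range to extract the supremum, and then show $\sum\mean[N_n/a_{N_n}]\ll\int_{\rvf^{-1}(L(x))}^{\delta x}(\rvf(s)/s)^2\,\dd s$ via the same binomial truncation at a fixed fraction of $\xpp n$ and the change of variables from Example~\ref{ex:Williamson-2}. The only cosmetic difference is that the paper absorbs the exponential residual termwise via $e^{-cn}=o(n/a_n)$, whereas you sum it and compare to a lower bound for the integral; both are fine.
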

\begin{proof}
  It is clear that $\Iof_{\eta,n,r}(t,T) \le \iof_\eta(t,T)$
  for any $n\ge 0$.  For $c_1 x_n/\kappa \le t\le c_2 x_n + 2h$,
  $\tail\cdf(x_n)/x_n \asymp 1/(t \rvf(t))$.  Then
  \begin{align*}
    \frac{\tail\cdf(x_n)}{x_n}
    \sup_{c_1 x_n/\kappa \le t \le c_2 x_n+2h}
    \iof_\eta(t, T)
    \asymp
    \sup_{c_1 x_n/\kappa \le t \le c_2 x_n + 2h}
    \frac{\iof_\eta(t,T)}{t \rvf(t)}
    \le
    \sup_{t\ge c_1 x/\kappa}
    \frac{\iof_\eta(t,T)}{t \rvf(t)},
  \end{align*}
  so by \eqref{e:sum-small-n-R}, letting $\lambda(x) = \sum_{L(x)\le
    n<\rvf(\delta x)} \mean(N_n/a_{N_n})$,
  \begin{align*}
    \XR(x,\delta)
    \ll
    \frac{x}{\rvf(x)} 
    \sup_{t\ge c_1 x/\kappa} \frac{\iof_\eta(t,T)}{t \rvf(t)}
    \times  \lambda(x).
  \end{align*}
  It therefore only remains to show that for $x\gg 1/\delta$,
  \begin{align} \label{e:sum-n-an}
    \lambda(x)
    \ll
    \int_{\rvf^{-1}(L(x))}^{\delta x} 
    \cbfrac{\rvf(s)}{s}^2\,\dd s.
  \end{align}

  Write $w_n = N_n / a_{N_n}$.  Fix $p\in (0, \xpp)$ and let $u_n =
  w_n \cf{N_n\ge p n}$ and $v_n = w_n - u_n$.  By $x/\rvf^{-1}(x)\in 
  \RV$ with exponent $1-1/\alpha<0$, $w_n\ll 1$ and $u_n\ll (p
  n)/\rvf^{-1}(p n) \ll n/a_n$.  Then by $v_n = w_n \cf{N_n<p n} \ll
  \cf{N_n< p n}$, $\mean(v_n) \ll e^{-c n}$ for some $c>0$, giving
  $\mean(v_n) = o(n/a_n)$.  On the other hand, since $u_n/(n/a_n)\to
  (\xpp)^{1-1/\alpha}$ a.s., by dominated convergence, $\mean(u_n)
  \asymp n/a_n$.  Then by $\mean(u_n) \le \mean(w_n) = \mean(u_n) +
  \mean(v_n)$, $\mean(w_n) \asymp n/a_n$, which gives $\lambda(x)
  \ll\sum_{L(x)\le n<\rvf(\delta x)} \lfrac{n}{a_n}$.  Finally, by
  $L(x)/\rvf(x)\to 0$ and the same derivation at the end of Example
  \ref{ex:Williamson-2}, \eqref{e:sum-n-an} follows.
\end{proof}

\subsection{Proof of Theorem \ref{t:SRT-alpha-LE-1/2}}
From Remark \ref{r:small-n-truncation}, it suffices to prove the
following lemma.  Let $L$, $T$, and $\eta$ be as in Theorem
\ref{t:SRT-alpha-LE-1/2}.  Fix any $r\in (0,1]$ and $1/2\le c_1<1\le
c_2$.  For brevity, write $\Xr = \XR$.
\begin{lemma} \label{l:small-n-truncation}
  Let \eqref{e:diff2} and \eqref{e:diff} hold.  Then given any $r\in
  (0,1]$ and $1/2\le c_1<1\le c_2$,
  \begin{align} \label{e:RT-x-delta}
    \Lsup_{x\toi} \Xr(x, \delta) = 0 \quad
    \text{for any}\ \delta>0.
  \end{align}
  In particular, if \eqref{e:low-cut} also holds, then
  \eqref{e:small-n-truncation} holds.
\end{lemma}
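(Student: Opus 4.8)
The plan is to feed the hypotheses \eqref{e:diff2}--\eqref{e:diff} into the estimate of Lemma \ref{l:R-bound} and check that the right-hand side there tends to $0$. By Lemma \ref{l:R-bound}, for $x\gg 1/\delta$,
\begin{align*}
  \Xr(x,\delta)
  \ll
  \frac{x}{\rvf(x)}\,
  \sup_{t\ge c_1 x/\kappa}\frac{\iof_\eta(t,T)}{t\,\rvf(t)}\,
  \int_{\rvf^{-1}(L(x))}^{\delta x}\cbfrac{\rvf(s)}{s}^2\,\dd s,
\end{align*}
so everything reduces to bounding the supremum factor and the integral factor separately, then multiplying. Since $\kappa=\Flr{1/\alpha}\ge 2$ when $\alpha\le 1/2$, the supremum is over $t\ge c_1 x/\kappa$, i.e. over $t$ comparable to $x$ up to a fixed constant; by regular variation of $\rvf$ and $\iof_\eta(\cdot,T)$ (the latter being essentially an integral of a regularly varying integrand, up to the truncation at $T$), $\sup_{t\ge c_1 x/\kappa}\iof_\eta(t,T)/(t\rvf(t))$ is of the same order as $\iof_\eta(x,T)/(x\rvf(x))$ — this is where one invokes the monotonicity/uniform-convergence machinery for regularly varying functions and the fact that $\omega(t)\ll t/\rvf(t)\cdot(\text{slowly varying})$ is not needed, only that $\iof_\eta(t,T)$ does not grow faster than a regularly varying function; more carefully, one uses that $t\mapsto\iof_\eta(t,T)$ has at most polynomial growth so that the $\sup$ over $t\ge c_1 x/\kappa$ is achieved asymptotically near the left endpoint or is controlled by the value at $x$.

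Next I would handle the integral $\int_{\rvf^{-1}(L(x))}^{\delta x}\cbfrac{\rvf(s)}{s}^2\,\dd s$. The integrand $(\rvf(s)/s)^2$ is regularly varying of index $2\alpha-2$. For $\alpha\in(0,1/2)$ the index $2\alpha-2<-1$, so the integral converges at $+\infty$ and, by Karamata's theorem, $\int_{\rvf^{-1}(L(x))}^{\delta x}(\rvf(s)/s)^2\,\dd s \asymp \rvf^{-1}(L(x))\cdot\cbfrac{\rvf(\rvf^{-1}(L(x)))}{\rvf^{-1}(L(x))}^2 = L(x)^2/\rvf^{-1}(L(x))$, since $\rvf(\rvf^{-1}(L(x)))\asymp L(x)$ and $\rvf^{-1}=\rvf^-$. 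Plugging this and the supremum bound back in,
\begin{align*}
  \Xr(x,\delta)
  \ll
  \frac{x}{\rvf(x)}\cdot\frac{\iof_\eta(x,T)}{x\,\rvf(x)}\cdot
  \frac{L(x)^2}{\rvf^{-1}(L(x))}
  =
  \frac{\iof_\eta(x,T)\,L(x)^2}{\rvf(x)^2\,\rvf^-(L(x))},
\end{align*}
which tends to $0$ precisely by the hypothesis \eqref{e:diff2}. For $\alpha=1/2$ the index is exactly $-1$, so the integral is (up to constants) $u(\delta x)-u(\rvf^{-1}(L(x)))$, where $u(x)=\int_1^x(\rvf(s)/s)^2\,\dd s$; since $u\in\RV_0$, $u(\delta x)\sim u(x)$, so the integral is $\asymp u(x)-u(\rvf^-(L(x)))$. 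The same substitution gives
\begin{align*}
  \Xr(x,\delta)
  \ll
  \frac{\iof_\eta(x,T)}{\rvf(x)^2}\,\bigl(u(x)-u(\rvf^-(L(x)))\bigr)
  =
  \frac{\iof_\eta(x,T)}{k(x)}\cdot\frac{u(x)-u(\rvf^-(L(x)))}{u(x)}.
\end{align*}
If $u(x)/u(\rvf^-(L(x)))\to 1$ the second factor $\to 0$, and then the first-case bound $\iof_\eta(x,T)=O(k(x))$ in \eqref{e:diff} makes the product vanish; otherwise the second factor stays bounded and the ``else'' bound $\iof_\eta(x,T)=o(k(x))$ does the job. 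Either way $\Lsup_{x\toi}\Xr(x,\delta)=0$ for every $\delta>0$, which is \eqref{e:RT-x-delta}.

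The main obstacle I anticipate is the supremum step: justifying $\sup_{t\ge c_1 x/\kappa}\iof_\eta(t,T)/(t\rvf(t)) \ll \iof_\eta(x,T)/(x\rvf(x))$ (or at worst a controlled multiple thereof) requires some care, because $\iof_\eta$ need not itself be monotone or regularly varying — it is a truncated integral. One needs to argue that, although $\omega$ may spike, the averaged quantity $\iof_\eta(t,T)$ cannot grow faster than roughly $\rvf(t)^2$ (a fact hinted at in Remark \ref{r:small-n-truncation} and used implicitly in the hypotheses), and that ratios $\iof_\eta(t,T)/\iof_\eta(x,T)$ for $t\in[c_1x/\kappa,\,x]$ and for $t\ge x$ are bounded by constants depending only on $\alpha,\eta,\kappa$ via a Potter-type bound. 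Once the growth of $\iof_\eta(\cdot,T)/(\cdot\,\rvf(\cdot))$ is pinned down, the rest is Karamata's theorem and bookkeeping. The final sentence of the lemma — that \eqref{e:low-cut} plus \eqref{e:RT-x-delta} yields \eqref{e:small-n-truncation} — follows by splitting $\sum_{n<\rvf(\delta x)} = \sum_{n<L(x)} + \sum_{L(x)\le n<\rvf(\delta x)}$, bounding the first piece by \eqref{e:low-cut} and the second via Theorem \ref{t:sum-small-n} (whose error terms $O(\delta^{2\alpha})T+o(\delta^\alpha)$ vanish as $\delta\to 0$) together with \eqref{e:RT-x-delta}.
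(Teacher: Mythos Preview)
Your overall strategy --- apply Lemma \ref{l:R-bound}, bound the supremum and integral factors separately, then multiply --- matches the paper's, and your treatment of the integral via Karamata is correct in both cases $\alpha<1/2$ and $\alpha=1/2$. The final deduction of \eqref{e:small-n-truncation} from \eqref{e:RT-x-delta}, \eqref{e:low-cut}, and Theorem \ref{t:sum-small-n} is also right.

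The gap you yourself flagged in the supremum step is real, and your proposed fix is not how the paper closes it. First, a factual point: the supremum in Lemma \ref{l:R-bound} is over the \emph{unbounded} ray $t\ge c_1 x/\kappa$, not over $t$ comparable to $x$; so there is no hope of reducing $\iof_\eta(t,T)$ to $\iof_\eta(x,T)$ via a Potter-type comparison on a bounded range of ratios $t/x$. More substantively, $\iof_\eta(\cdot,T)$ is in general neither monotone nor regularly varying, and nothing in the hypotheses forces $\sup_{t\ge c_1 x/\kappa}\iof_\eta(t,T)/(t\,\rvf(t))$ to be comparable to the value at $t=x$.

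The paper sidesteps this entirely: instead of comparing $\iof_\eta(t,T)$ across different $t$, it applies the hypothesis \eqref{e:diff2} (respectively \eqref{e:diff}) \emph{pointwise to each $t$ in the supremum}. For $\alpha\in(0,1/2)$: set $v(t)=L(t)^2/\rvf^{-1}(L(t))$, so that \eqref{e:diff2} reads $\iof_\eta(t,T)/(t\,\rvf(t))=o(\rvf(t)/(t\,v(t)))$ as $t\to\infty$. The point is that the \emph{bound} $\rvf(t)/(t\,v(t))$ \emph{is} regularly varying, with index $\alpha-1+c(1/\alpha-2)\le -\alpha<0$ (where $L\in\RV_c$, $0\le c\le\alpha$), so its supremum over $t\ge c_1 x/\kappa$ is $\asymp\rvf(x)/(x\,v(x))$. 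Since $t\to\infty$ with $x$, the $o(1)$ passes through the supremum, giving
\[
  \Xr(x,\delta)=o\!\left(\frac{1}{v(x)}\int_{\rvf^{-1}(L(x))}^{\infty}\cbfrac{\rvf(s)}{s}^2\,\dd s\right)=o(1),
\]
and your Karamata computation finishes. For $\alpha=1/2$ the same manoeuvre works: \eqref{e:diff} gives $\iof_\eta(t,T)/\rvf(t)^2=O(1/u(t))$ or $o(1/u(t))$; since $u$ is increasing and slowly varying, $\sup_{t\ge c_1 x/\kappa}1/u(t)=1/u(c_1 x/\kappa)\asymp 1/u(x)$, and you conclude exactly as you wrote. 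So the correct fix is simpler than a Potter estimate on $\iof_\eta$: push the hypothesis \emph{inside} the supremum first, and only then use regular variation on the resulting bounding function.
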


\begin{proof}
  It suffices to show \eqref{e:RT-x-delta}, because then \eqref
  {e:small-n-truncation} follows from Theorem \ref{t:sum-small-n}.
  In the rest of the proof, let $\delta>0$ be fixed.  

  First suppose $\alpha \in (0,1/2)$.  Let $v(x) = L(x)^2/\rvf^{-1}
  (L(x))$.  Then by \eqref{e:diff2}, 
  \begin{align*}
    \sup_{t\ge c_1 x/\kappa} \frac{\iof_\eta(t,T)}{t \rvf(t)}
    =
    o\Grp{
      \sup_{t\ge c_1 x/\kappa}
      \frac{\rvf(t)}{t v(t)}
    }, \quad\text{as}\ x\toi.
  \end{align*}
  Since $L\in \RV_c$ with $0\le c\le \alpha$, 
  $\rvf^{-1}(L(x))\in \RV_{c/\alpha}$, so by definition of $v(t)$,
  $\lfrac{\rvf(t)} {[t v(t)]}\in \RV_b$ with $b = \alpha - 1 +
  c(1/\alpha-2) \le \alpha - 1 + \alpha(1/\alpha-2) = -\alpha < 0$.
  Consequently,
  \begin{align*}
    \sup_{t\ge c_1 x/\kappa}
    \frac{\rvf(t)}{t v(t)}
    \ll \frac{\rvf(x)}{x v(x)}.
  \end{align*}
  Combining the displays and Lemma \ref{l:R-bound}, as $x\toi$,
  \begin{align*}
    \Xr(x,\delta) = 
    o\Grp{
      \frac{1}{v(x)}
      \int_{\rvf^{-1}(L(x))}^\infty
      \cbfrac{\rvf(s)}{s}^2\,\dd s
    }.
  \end{align*}

  By assumption, $L(x)\ge 1$ for all $x>0$.  Since $\cbfrac{\rvf(x)}
  {x}^2 \in \RV$ with exponent $2\alpha-2<-1$, for $z\ge 1$,
  $\int_z^\infty \cbfrac{\rvf(s)}{s}^2\,\dd s \asymp \rvf(z)^2/z$.
  Letting $z=\rvf^{-1}(L(x))$, $\Xr(x, \delta) = o(1)$ as $x\toi$,
  hence proving \eqref{e:RT-x-delta}.

  Next suppose $\alpha = 1/2$.  By definition, $u(x) = \int_1^x
  \cbfrac{\rvf(s)}{s}^2\,\dd s$.  Since  $x/\rvf(x) \ll t/\rvf(t)$ for
  $x\ge 1$ and $t\ge c_1 x/\kappa$, by Lemma \ref{l:R-bound},
  \begin{align} \label{e:R-alpha-1/2-or-more}
    \Xr(x,\delta)
    \ll
    \sup_{t\ge c_1 x/\kappa} \frac{\iof_\eta(t,T)}{\rvf(t)^2}
    \times [u(\delta x) - u(\rvf^{-1}(L(x)))]
  \end{align}

  Since $\cbfrac{\rvf(x)}{x}^2\in \RV_{-1}$, $u\in \RV_0$.  By
  condition \eqref{e:diff}, if $\lfrac{u(x)}{u(\rvf^{-1}(L(x)))} \to
  1$, then $\lfrac{\iof_\eta(t,T)}{\rvf(t)^2} =O(1/u(t)) = O(1/u(x))$
  for $t\ge c_1 x/\kappa$; otherwise, $\lfrac{\iof_\eta(t,
    T)}{\rvf(t)^2} = o(1/u(x))$.  In either case, since $u$ is
  strictly increasing, $\Xr(x,\delta) = o(\lfrac{u(\delta x)}{u(x)}) =
  o(1)$ as $x\toi$, proving \eqref{e:RT-x-delta}.
\end{proof}

\subsection{Proof of Theorem \ref{t:SRT-alpha-GT-1/2}}
Similar to the argument for Theorem  \ref{t:SRT-alpha-LE-1/2}, it
suffices to show \eqref{e:small-n-truncation}.  Let $L(x)\equiv 1$,
$\eta=1/2$, and $T=0$.  Fix any $r\in (0,1]$, and $1/2\le c_1<1\le
c_2$.  Write $\Xr = \XR$.  By $\tail\cdf(y) \asymp
\tail\cdf(x)$ for $(1-\eta) x \le y \le x$,
\begin{align*}
  \iof_\eta(x,0)
  &=
  \int_{x/2}^x \frac{y\cdf(y+I)}{\tail\cdf(y)}\,\dd y
  \ll
  \frac{x}{\tail\cdf(x)}
  \int_{x/2}^x [\tail\cdf(y) - \tail\cdf(y+h)]\,\dd y.
\end{align*}
The last integral is equal to $\int_{x/2}^{x/2 + h} \tail\cdf -
\int_x^{x+h} \tail\cdf\le h \tail\cdf(x/2)$.  As a result,
\begin{align*}
  \iof_\eta(x,0)
  &\ll
  \frac{x}{\tail\cdf(x)} h\tail\cdf(x/2) = O(x).
\end{align*}
Then by Lemma \ref{l:R-bound} and $\cbfrac{\rvf(x)}{x}^2 \in \RV$ with
exponent $2\alpha - 2>-1$,
\begin{align*}
  \Xr(x,\delta)
  &\ll
  x\tail\cdf(x)^2 \int_1^{\delta x} \cbfrac{\rvf(y)}{y}^2\,\dd y
  \asymp
  x\tail\cdf(x)^2 \rvf(\delta x)^2/(\delta x)
  =
  O(\delta^{2\alpha-1}).
\end{align*}
Since $\alpha > 1/2$, by Theorem \ref{t:sum-small-n}, this then leads
to \eqref {e:small-n-truncation}.

\subsection{Proof of Theorem \ref{t:renewal-inf-div}}
Under \eqref{e:tail-ind}, $\tail\cdf(x) \sim \nu((x,\infty))$ and
$\cdf(-x) \sim \nu((-\infty, -x])$ as $x\toi$ \cite[][Th.~8.2.1]
{bingham:89}.  Then by \eqref{e:tail-ratio-ind}, $\cdf(-x) /
\tail\cdf(x)\to \tailr$.  Therefore, as the proof of Theorem \ref
{t:SRT-alpha-LE-1/2}, it suffices to show \eqref{e:small-n-truncation}
holds for $\cdf$.  Recall $X\sim \cum Y N  + W$, where $Y_1$, $Y_2$,
\ldots, $N$, and $W$ are independent, such that $Y_i \sim \cdf_\nu$,
$N \sim \dpois(\mu)$ with $\mu=\nu_1(\Reals)$, and $W$ is infinitely
divisible with $\levy$ measure $\nu(\cdot\cap (-r,r))$.  Henceforth,
by $\xi_\tau$ we mean $\cum Y \tau$ with $\tau$ a non-negative
integer valued random variable independent of all $Y_i$.  Then for
$n\ge 1$, $S_n \sim \xi_{N_n} + V_n$, where $N_n\sim \dpois(n\mu)$,
$V_n = \cum W n$, with $W_i\sim W$ being i.i.d.  Fix $M>1$, whose
value will be selected later.  Given $\rx\in (0,1/2)$, by
independence, for $\delta>0$ and $x>0$,
\begin{align*}
  \cdf\cvp n(x+I)
  &=
  \pr{\xi_{N_n} + V_n \in x + I}
  \\
  &\le
  \sum_{k\le \rvf(M\delta x)}
  \pr{\xi_k + V_n \in x +I\AND |V_n|\le \rx x}
  \,\pr{N_n = k}  + R_n(x)
  \\
  &\le
  \sum_{k\le \rvf(M\delta x)}
  \sup_{|t-x|\le \rx x}
  \cdf_\nu\cvp k(t+I)\,\pr{N_n = k} 
  + R_n(x)
\end{align*}
where $R_n(x) = \pr{N_n > \rvf(M\delta x)} + \pr{|V_n|>\rx x}$.
Summing over $n<\rvf(\delta x)$,
\begin{align} \label{e:small-n-inf-div}
  \sum_{n<\rvf(\delta x)} \cdf\cvp n(x+I)
  \le
  \sum_{k\le \rvf(M\delta x)}
  \sup_{|t-x|\le \rx x} \cdf\cvp k(t+I)
  \sumoi n \pr{N_n = k} 
  + \sum_{n<\rvf(\delta x)} R_n(x).
\end{align}
For each $k,n\ge 1$ and $s\in [n\mu, n\mu+\mu]$, $\pr{N_n = k} =
(n\mu)^k e^{-n\mu}/k!\le s^k e^{-s+\mu}/k!$.  Then
\begin{align*}
  \sumoi n \pr{N_n = k}
  \le \nth{k!} \intzi s^k e^{-s+\mu}\,\dd s
  = e^\mu.
\end{align*}
On the other hand, by Markov inequality,
\begin{align*}
  R_n(x)
  &\le
  \mean[e^{N_n - \rvf(M\delta x)}] + 
  \mean[e^{V_n - \rx x}+ e^{-V_n - \rx x}]
  \\
  &=
  \exp\{n\mu(e-1) - \rvf(M\delta x)\} +
  e^{-\rx x} [(\mean e^W)^n +(\mean e^{-W})^n].
\end{align*}
If $M>(4\mu)^{1/\alpha}$, then for all large $x>0$ and $n\le
\rvf(\delta x)$, $n\mu(e-1) \le 2\mu\rvf(\delta x) < \rvf(M\delta
x)/2$.   On the other hand, for any $t$, $\mean[e^{t W}]<\infty$
\cite[][Th.~25.17]{sato:99}.  If $c = \ln \mean[e^{|W|}]$, then
$(\mean e^{\pm W})^n \le e^{c n}$.  As a result,
\begin{align*}
  \sum_{n<\rvf(\delta x)} R_n(x)
  \le \rvf(\delta x) e^{- \rvf(M\delta x)/2} +
  \rvf(\delta x) e^{-\rx x + c\rvf(\delta x)}.
\end{align*}
Since $\rvf(x) = o(x)$, combining the above displays with
\eqref{e:small-n-inf-div} yields
\begin{align*}
  \Lsup_{x\toi}
  \frac{x}{\rvf(x)} \sum_{n<\rvf(\delta x)}
  \cdf\cvp n(x+I)
  \le
  \Lsup_{x\toi} \frac{x}{\rvf(x)}
  \sum_{k\le \rvf(M\delta x)}
  \sup_{|t-x|\le \rx x} \cdf_\nu\cvp k(t+I).
\end{align*}
Since by assumption \eqref{e:diff2} and \eqref{e:diff} hold for
$\cdf_\nu$, by Theorem \ref{t:sum-small-n} and Lemma \ref
{l:small-n-truncation}, the left hand side (LHS) is dominated by
\begin{align*}
  o(\delta^\alpha) + O(\delta^{2\alpha}) T
  + \Lsup_{x\toi} \frac{x}{\rvf(x)}
  \sum_{k<L(x)}
  \sup_{|t-x|\le \rx x} \cdf_\nu\cvp k(t+I).
\end{align*}
Since $\rx>0$ is arbitrary, by \eqref{e:low-cut-inf-div}, the LHS of
the previous display is dominated by $o(\delta^\alpha) +
O(\delta^{2\alpha}) T$.  Thus \eqref{e:small-n-truncation} holds for
$\cdf$, finishing the proof.

\subsection{Proof of Theorem \ref{t:ladder}}
Let $\omega_+(x)$ and $\iof_{+,\eta}(x,T)$ denote the functions
defined by \eqref{e:prob-ratio} and \eqref{e:overflow} respectively
in terms of $\cdf\!_+$.  Recall the well-known identities
\cite[][p.~399]{feller:71}
\begin{align}\label{e:renewal-ladder}
  \begin{split}
    \cdf\!_+(\dd t)
    &=
    \intzi \cdf(y+\dd t)\, \renf_-(\dd y), \quad t>0,
    \\
    \cdf\!_-(\dd t)
    &=
    \int_{0+}^\infty \cdf(-y-\dd t)\, \renf_+(\dd y), \quad t\ge 0.
  \end{split}
\end{align}
For $t>0$, by the first identity in \eqref{e:renewal-ladder},
\begin{align*}
  [\omega_+(t)-T]^+
  &=
  \frac{[t \cdf\!_+(t+I) - \tail\cdf\!_+(t) T]^+}{\tail\cdf\!_+(t)}
  \\
  &\le
  \nth{\tail\cdf\!_+(t)} \intzi
  [t \cdf(t+y+I) - \tail\cdf(t+y) T]^+\,\renf_-(\dd y)
  \\
  &=
  \frac{t}{\tail\cdf\!_+(t)} \intzi
  \frac{\tail\cdf(t+y)}{t+y}
  \Sbr{
    \omega(t+y) - \frac{(t+y) T}{t}
  }^+\,\renf_-(\dd y).
\end{align*}
Given $x\ge 1$, integrate both sides of the inequality over $t\in
[(1-\eta) x, x]$.  For each such $t$ and $y\ge 0$,
$t/\tail\cdf\!_+(t)\asymp x/\tail\cdf\!_+(x)$ and
$\tail\cdf(t+y)/(t+y)\asymp \tail\cdf(x+y)/(x+y)$.  Then
\begin{align*}
  \iof_{+,\eta}(x, T)
  &\le
  \int_{(1-\eta)x}^x \frac{t\,\dd t}{\tail\cdf\!_+(t)}
  \intzi 
  \frac{\tail\cdf(t+y)}{t+y} [\omega(t+y)-T]^+
  \,\renf_-(\dd y)
  \\
  &\ll
  \frac{x}{\tail\cdf\!_+(x)}
  \intzi
  \frac{\tail\cdf(x+y)\renf_-(\dd y)}{x+y} 
  \int_{(1-\eta)x}^x
  [\omega(t+y) - T]^+\,\dd t.
\end{align*}
Since
\begin{align*}
  \int_{(1-\eta)x}^x[\omega(t+y) - T]^+\,\dd t
  &=
  \int_{(1-\eta)x + y}^{x+y} [\omega(t) - T]^+\,\dd t
  \\
  &\le
  \int_{(1-\eta)(x + y)}^{x+y} [\omega(t) - T]^+\,\dd t
  =
  \iof_\eta(x+y,T),
\end{align*}
then
\begin{align*}
  \iof_{+,\eta}(x,T)
  \ll
  \frac{x}{\tail\cdf_+(x)}
  \intzi \frac{\iof_\eta(x+y,T)\,\renf_-(\dd y)}
  {(x+y)\rvf(x+y)}.
\end{align*}

First, suppose $\alpha\ppar\in (0, 1/2)$.   Let
\begin{align*}
  Q(x) = \frac{x L(x)^2 \rvf(x)}{\rvf_+(x)^2 \rvf_+^-(L(x))}.
\end{align*}
Then by \eqref{e:diff2-ladder},
\begin{align*}
  \iof_{+,\eta}(x,T) = o\Grp{
    x\rvf_+(x) \intzi \frac{\renf_-(\dd y)}{Q(x+y)}
  }, \quad
  x\toi.
\end{align*}
By \eqref{e:low-cut-ladder}, $L\in \RV_\theta$ with $\theta \in [0,
\alpha\ppar]$.  Then $Q\in\RV_\beta$ with $\beta=1 + 2\theta + \alpha 
-2\alpha\ppar - \theta/(\alpha\ppar) \ge \alpha$.  Without
loss of generality, assume that $Q$ is smooth and strictly increasing.
Then
\begin{align*}
  \intzi \frac{\renf_-(\dd y)}{Q(x+y)}
  &=
  \intzi \renf_-(\dd y) \int_y^\infty \frac{Q'(x+t)\,\dd t}{Q(x+t)^2}
  \\
  &\asymp
  \intzi \frac{\renf_-(t)\,\dd t}{(x+t) Q(x+t)},
\end{align*}
where the second line is by $Q'(x)\sim \beta Q(x)/x$ as $x\toi$ and
Fubini's Theorem.   To continue, we need the following result.
\begin{lemma} \label{l:ladder-renewal-measure}
  If $\ppar\in (0,1]$, then $\renf_-(x) \ll \rvf(x)/\rvf_+(x)$ as
  $x\toi$.
\end{lemma}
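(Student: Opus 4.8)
The plan is to read the bound off directly from the first identity in \eqref{e:renewal-ladder}, using only the monotonicity of $\tail\cdf$ and regular variation. First I would integrate that identity over $t\in(x,\infty)$, which yields
\begin{align*}
  \tail\cdf\!_+(x)
  =
  \int_{[0,\infty)} \tail\cdf(x+y)\,\renf_-(\dd y),
  \qquad x>0.
\end{align*}
This is legitimate because $\renf_-$, being a renewal measure on $[0,\infty)$, is locally finite (and, in the defective case $\ppar=1$, a bounded measure), while $\tail\cdf$ is bounded and non-increasing, so both sides are finite and the identity transfers from the measure form to the tail form.

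Next I would discard the part of the integral with $y>x$ and use $\tail\cdf(x+y)\ge\tail\cdf(2x)$ for $y\in[0,x]$, obtaining
\begin{align*}
  \tail\cdf\!_+(x)
  \ \ge\
  \int_{[0,x]} \tail\cdf(x+y)\,\renf_-(\dd y)
  \ \ge\
  \tail\cdf(2x)\,\renf_-(x),
\end{align*}
recalling $\renf_-(x)=\renf_-([0,x])$. Since $\tail\cdf(2x)\sim 1/\rvf(2x)>0$ for $x\gg 1$ by \eqref{e:tail-RV}, this rearranges to $\renf_-(x)\le\tail\cdf\!_+(x)/\tail\cdf(2x)$ for all large $x$.

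The last step is Karamata-type asymptotics: by the standing fact $\tail\cdf\!_+(x)\sim 1/\rvf_+(x)$ with $\rvf_+\in\RV_{\alpha\ppar}$, together with $\tail\cdf(2x)\sim 1/\rvf(2x)\sim 2^{-\alpha}/\rvf(x)$ coming from $\rvf\in\RV_\alpha$, one gets $\renf_-(x)\le[1+o(1)]\,2^{\alpha}\,\rvf(x)/\rvf_+(x)$, i.e.\ $\renf_-(x)\ll\rvf(x)/\rvf_+(x)$ as $x\toi$. I do not expect a genuine obstacle: the only points deserving a word of care are the passage to the tail form of \eqref{e:renewal-ladder} and the local finiteness of $\renf_-$ (both immediate), and the role of the hypothesis $\ppar\in(0,1]$, which enters solely through the guarantee $\tail\cdf\!_+(x)\sim 1/\rvf_+(x)$ with $\rvf_+\in\RV_{\alpha\ppar}$; in the edge case $\ppar=1$ the estimate is in fact far from tight, since then $\renf_-$ is a finite measure and the bound holds trivially.
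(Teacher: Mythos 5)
Your proof is correct, and it is both shorter and more elementary than the one in the paper. You both start from the tail form $\tail\cdf\!_+(x) = \intzi \tail\cdf(x+y)\,\renf_-(\dd y)$ of the first identity in \eqref{e:renewal-ladder}. From there the paper converts the measure integral into a density integral via an integration-by-parts step based on $\rvf'\sim\alpha\rvf/x$ and Fubini, and then splits into cases: for $\ppar\in(0,1)$ it invokes the regular variation $\renf_-\in\RV_{\alpha(1-\ppar)}$ and a change of variable to get the two-sided estimate $\renf_-(x)\asymp\rvf(x)/\rvf_+(x)$, while for $\ppar=1$ it lower-bounds the integral by its tail over $t\ge x$. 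Your route skips all of that: truncating the integral to $y\in[0,x]$ and bounding $\tail\cdf(x+y)\ge\tail\cdf(2x)$ gives $\renf_-(x)\le\tail\cdf\!_+(x)/\tail\cdf(2x)$ in one line, after which $\tail\cdf\!_+(x)\sim 1/\rvf_+(x)$ and $\tail\cdf(2x)\sim 2^{-\alpha}/\rvf(x)$ finish the job with no case distinction and no appeal to regular variation of $\renf_-$. What the paper's argument buys, in exchange for the extra machinery, is the sharper two-sided bound $\renf_-(x)\asymp\rvf(x)/\rvf_+(x)$ when $\ppar<1$; but since the lemma only asserts the upper bound, your approach is exactly matched to the statement and is the cleaner of the two. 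Your closing remark about $\ppar=1$ is also right: $\renf_-$ is then a finite measure, and since $\tail\cdf\!_+(x)\ge\tail\cdf(x)$ (the first ascending ladder height dominates $X_1^+$) gives $\rvf(x)/\rvf_+(x)\gg 1$, the bound is indeed trivial in that case.
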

Assuming the lemma to be true for now, by change of variable $t= x s$,
\begin{align*}
  \intzi \frac{\renf_-(t)\,\dd t}{(x+t) Q(x+t)}
  &\ll
  \intzi \frac{[\rvf(x s)/\rvf_+(x s)]\,\dd s}{(1+s) Q(x(1+s))}
  \\
  &\sim
  \frac{\rvf(x)}{\rvf_+(x) Q(x)}
  \intzi \frac{s^{\alpha(1-\ppar)}\,\dd s}{(1+s)^{1+\beta}}.
\end{align*}
Consequently, $\iof_{+,\eta}(x,T) = o\Grp{\lfrac{\rvf_+(x)^2
    \rvf_+^-(L(x))}{L(x)^2}}$, so by Theorem \ref{t:SRT-alpha-LE-1/2},
the SRT holds for $\cdf\!_+$.   The case $\alpha\ppar=1/2$ can be
proved similarly by letting $Q(x) = x \rvf(x)/k_+(x)$.

\begin{proof}[Proof of Lemma \ref{l:ladder-renewal-measure}]
  More exact asymptotic result than the lemma probably is already
  known.  We prove the lemma only for completeness.  From
  \eqref{e:renewal-ladder},
  \begin{align*}
    \cdf\!_+(x) \asymp
    \intzi \frac{\renf_-(\dd y)}{\rvf(x+y)}
    \asymp 
    \intzi \frac{\renf_-(t)\,\dd t}{(x+t) \rvf(x+t)}.
  \end{align*}
  If $\ppar\in (0,1)$, then $\renf_-\in \RV_{\alpha(1-\ppar)}$.  Then
  by change of variable $t = x s$, RHS $\asymp \renf_-(x)/\rvf(x)$,
  yielding $\renf_-(x) \asymp \rvf(x)/\rvf_+(x)$.  If $\ppar = 1$,
  then
  \begin{align*}
    \text{RHS}
    &\ge
    \renf_-(x)\int_x^\infty \frac{\dd t}{(x+t)\rvf(x+t)}
    \asymp
    \renf_-(x)/\rvf(x),
  \end{align*}
  finishing the proof.
\end{proof}

\section{Proofs of technical tools} \label{s:small-n}
Recall that $\kappa = \Flr{1/\alpha}$.  In this section, let
$\gamma\in (\alpha^{-1} (\kappa+1)^{-1}, 1)$ be fixed and as in
\cite{doney:97}, define $\zeta_{n,x} = a_n^{1-\gamma} x^\gamma$.

\subsection{Lemmas for Theorem \ref{t:sum-small-n}}
Given $n\ge 1$, for $k\le
n$, $X_k$ will be colloquially referred to as a ``large jump'' in
$S_n$ if $X_k > \zeta_{n,x}$, and as a ``small step'' otherwise.
Given $\delta>0$, $\eno X n$ is called ``short'' if $n < \rvf(\delta
x)$.

For $n\ge 1$, denote by $X_{n:1} \ge X_{n:2} \ge \ldots \ge X_{n:n}$
the arrangement of $\eno X n$ in decreasing order and $S_{n:k} =
X_{n:1} + \cdots + X_{n:k}$.  For $n\ge 0$ and $k>n$, define
$X_{n:0}=\infty$ and $X_{n:k}=0$.  Then for $n\ge 1$ and $0\le k\le
n$, $X_{n:k}>\zeta_{n,x}\Iff$ there are at least $k$ large jumps in
$S_n$, and $X_{n:k}>\zeta_{n,x}\ge X_{n:k+1}\Iff$ there are exactly
$k$ large jumps in $S_n$.

In the following, Lemmas \ref{l:many-large-jumps} and
\ref{l:small-sum} show that among all short sequences, one only need
consider those with at most $\kappa$ large jumps and with sum
of large jumps at least $(1-\rx) x$, where $\rx>0$ is arbitrary.
Under the extra condition that $X>0$, for each short sequence with
the above property, Lemma \ref{l:large-sum} bounds its contribution by
separately considering its jumps that are smaller than $r a_n$ and
those that are greater than $r a_n$.

\begin{lemma} \label{l:many-large-jumps}
  Let $k=\kappa+1$ and $b = [1/(k\gamma) + \alpha]/2$.  Note
  that $k\gamma b>1$.
  \begin{enumerate}[topsep=1ex, itemsep=0ex, leftmargin=4ex,
    label={\arabic*)}]
  \item Given $0<p\le 1$, for $\delta\in (0,1)$,
    \begin{align*} 
      \Lsup_{x\toi}
      \frac{x}{\rvf(x)} \sum_{n<\rvf(\delta x)}
      \sup_{p n\le m\le n\AND z\ge x}
      \pr{
        S_m \in z + I\AND X_{m:k}> \zeta_{m,z}
      }
      \ll_p\delta^{k\gamma b+\alpha-1}.
    \end{align*}
    
  \item For any $\ell\in\RV_0$,
    \begin{align*} 
      \Lsup_{x\toi}
      \frac{x}{\rvf(x)} \sum_{n\le \ell(x)}
      \sup_{m\le n\AND z\ge x}
      \pr{
        X_{m:k}> \zeta_{m,z}
      }
      = 0.
    \end{align*}
  \end{enumerate}
\end{lemma}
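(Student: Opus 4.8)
The plan is to dominate the summand by a union bound over the $\binom mk$ possible positions of the $k$ large jumps, splitting each contribution into a ``size'' factor (the cost of having $k$ jumps above $\zeta_{m,z}$) and a ``location'' factor (the cost of the partial sum landing in a window of width $h$). Throughout fix $\epsilon\in(0,\alpha-b]$, a nonempty interval because $b=[1/(k\gamma)+\alpha]/2<\alpha$, i.e.\ $k\gamma\alpha>1$ (which is the content of ``$k\gamma b>1$''). Put $\beta=k\gamma(\alpha-\epsilon)$, so that $\beta\ge k\gamma b>1$.

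For part 1: the probability vanishes unless $m\ge k$, in which case $\Cbr{X_{m:k}>\zeta_{m,z}}$ is the event that at least $k$ of $\eno Xm$ exceed $\zeta_{m,z}$, so by exchangeability
\begin{align*}
  \pr{S_m\in z+I,\ X_{m:k}>\zeta_{m,z}}
  \le\binom mk\pr{S_m\in z+I,\ X_1>\zeta_{m,z},\dots,X_k>\zeta_{m,z}}.
\end{align*}
Conditioning on $\eno Xk$ and using that $\sum_{i>k}X_i$ is independent of them with the law of $S_{m-k}$, the right side is at most $\binom mk\tail\cdf(\zeta_{m,z})^k\sup_w\pr{S_{m-k}\in w+I}$, and by the local limit theorem in its shift-uniform form (the same LLT already used for the large-$n$ part; $h$ is the span when $\cdf$ is lattice) $\sup_w\pr{S_\ell\in w+I}\ll1/a_\ell$, so, using $a_{m-k}\asymp a_m$ for $m\ge2k$,
\begin{align*}
  \pr{S_m\in z+I,\ X_{m:k}>\zeta_{m,z}}\ll\frac{\Grp{m\tail\cdf(\zeta_{m,z})}^k}{a_m}.
\end{align*}
Since $\zeta_{m,z}=a_m(z/a_m)^\gamma\ge a_m$ for $z\ge x\gg1$ and $\rvf(a_m)=m$, Potter's bound gives $\rvf(\zeta_{m,z})\ge m(z/a_m)^{\gamma(\alpha-\epsilon)}$ once $a_m$ exceeds an $\epsilon$-dependent constant, hence by \eqref{e:tail-RV} $m\tail\cdf(\zeta_{m,z})\ll(a_m/x)^{\gamma(\alpha-\epsilon)}$ and the last display is $\ll a_m^{\beta-1}/x^{\beta}$. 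As $a_m^{\beta-1}$ is increasing in $m$ ($\beta>1$), the supremum over $pn\le m\le n$ and $z\ge x$ is $\ll a_n^{\beta-1}/x^{\beta}$; the finitely many indices for which $m$, or $m-k$, is below the relevant threshold are absorbed by the cruder bound $\binom mk\tail\cdf(x^\gamma)^k$, whose contribution times $x/\rvf(x)$ tends to $0$ and so is invisible to the $\Lsup$. Finally, since $a_n\in\RV_{1/\alpha}$ is nondecreasing with $(\beta-1)/\alpha>0$, Karamata's theorem yields $\sum_{n<\rvf(\delta x)}a_n^{\beta-1}\asymp\rvf(\delta x)\,a_{\rvf(\delta x)}^{\beta-1}$, while $a_{\rvf(\delta x)}\asymp\delta x$ by \eqref{e:an-inverse-rvf} and $\rvf(\delta x)\sim\delta^\alpha\rvf(x)$; multiplying through,
\begin{align*}
  \Lsup_{x\toi}\frac{x}{\rvf(x)}\sum_{n<\rvf(\delta x)}\frac{a_n^{\beta-1}}{x^{\beta}}
  \ll_p\delta^{\alpha+\beta-1}\le\delta^{k\gamma b+\alpha-1},
\end{align*}
the last step using $\beta\ge k\gamma b$ and $\delta\in(0,1)$.

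Part 2 is the same estimate with the location factor dropped: $\pr{X_{m:k}>\zeta_{m,z}}\le\binom mk\tail\cdf(\zeta_{m,z})^k$, whose supremum over $z\ge x$ is attained at $z=x$ and is $\ll\Grp{m\tail\cdf(\zeta_{m,x})}^k\ll(a_m/x)^{\beta}$, increasing in $m$, so $\sup_{m\le n}(\cdot)\ll(a_n/x)^{\beta}$. Since $n\le\ell(x)$ with $\ell\in\RV_0$, $a_n\le a_{\ell(x)}$ is slowly varying in $x$, hence $\frac{x}{\rvf(x)}\sum_{n\le\ell(x)}(a_n/x)^{\beta}\le\frac{x}{\rvf(x)}\,\ell(x)\,(a_{\ell(x)}/x)^{\beta}$ is regularly varying in $x$ with exponent $(1-\alpha)-\beta<-\alpha<0$ and hence tends to $0$ (the few smallest indices are again negligible), so the $\Lsup$ equals $0$.

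The delicate input is the uniform concentration bound $\sup_w\pr{S_\ell\in w+I}\ll1/a_\ell$: the elementary Kolmogorov--Rogozin inequality gives only $\ll\ell^{-1/2}$, which is too weak because $a_\ell\asymp\ell^{1/\alpha}$ with $\alpha<1$, so one genuinely needs the $\alpha$-stable scaling furnished by the shift-uniform local limit theorem. Everything else is bookkeeping with Potter's bound and Karamata's theorem, together with the routine fact that $O(1)$ initial indices are asymptotically negligible after multiplication by $x/\rvf(x)$.
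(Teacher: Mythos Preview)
Your proof is correct and follows essentially the same approach as the paper: a union bound over the $k$ positions of large jumps, the uniform LLT bound $\sup_w\pr{S_\ell\in w+I}\ll 1/a_\ell$ for the location factor, and Potter's theorem to control $\rvf(\zeta_{m,z})/\rvf(a_m)$. The only organizational difference is that you apply Potter's bound directly to $m\tail\cdf(\zeta_{m,z})$ and then sum $a_n^{\beta-1}$ via Karamata, whereas the paper first bounds the supremum over $m$ by $\tail\cdf(\zeta_{n,x})^k/a_n$ (using $a_m\asymp_p a_n$), converts the sum to an integral via the change of variable $s=\rvf^{-1}(t)$, and applies Potter inside the integral; this is why the paper carries a $p$-dependent constant while your bound is in fact uniform in $p$. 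For Part~2 the paper takes the cruder route $\zeta_{m,z}\ge z^\gamma\ge x^\gamma$ throughout, which avoids the second invocation of Potter's bound and the slowly-varying composition $a_{\ell(x)}$, but your argument works as well.
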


\begin{remark}
  If $X>0$, then to prove the SRT, instead of 1) and 2), it
  suffices to establish
  \begin{align*} 
    \Lsup_{x\toi}
    \frac{x}{\rvf(x)} \sum_{n<\rvf(\delta x)}
    \pr{
      S_n \in x + I\AND X_{n:k}> \zeta_{n,x}
    }
    \ll  \delta^{k\gamma b+\alpha-1}.
  \end{align*}
\end{remark}

\begin{proof}
  1) Given $z\ge x$, for $m<k$, $\pr{S_m\in z+I\AND X_{m:k} >
    \zeta_{m,z}} = 0$, and for $m\ge k$, $\pr{S_m\in z+I\AND X_{m:k}
    > \zeta_{m,z}}\le m^k \Delta_{m,z}$, where $\Delta_{m,z} = \pr{S_m
    \in z+I\AND X_{k:k} > \zeta_{m,z}}$.  Therefore it suffices to
  show
  \begin{align} \label{e:finite-k}
    \Lsup_{x\toi}
    \frac{x}{\rvf(x)} \sum_{k\le n<\rvf(\delta x)}
    n^k \sup_{p n\vee k\le m\le n\AND z\ge x}
    \Delta_{m,z}
    \ll_p \delta^{k\gamma b+\alpha-1}.
  \end{align}
  For $z\ge x$ and $(p n)\vee k\le m\le n$, by independence of the
  $X_i$'s,
  \begin{align*}
    \Delta_{m,z}
    &=
    \int_{\text{all}\ y_i > \zeta_{m,z}}
    \pr{
      S_{m-k} \in z - y_1 - \cdots - y_k + I
    }\,
    \cdf(\dd y_1) \cdots \cdf(\dd y_k).
  \end{align*}
  From the LLTs and $\sup\limdf<\infty$, $\sup_s \pr{S_{m-k} \in s+I}
  \ll 1/a_m \ll_p 1/a_n$ \cite[][Th.~8.4.1--2]{bingham:89}, so by
  the display $\Delta_{m,z} \ll_{p,k}
  \tail\cdf(\zeta_{m,z})^k/a_n$.  Since $\zeta_{m,z} =  a_m^{1-\gamma}
  z^\gamma \asymp_p a_n^{1-\gamma} z^\gamma \ge \zeta_{n,x}$, then
  \begin{align} \label{e:finite-k2}
    \sup_{p n\vee k\le m\le n\AND z\ge x} \Delta_{m,z}
    \ll_{p,k} \tail\cdf(\zeta_{n,x})^k/a_n.
  \end{align}
  By \eqref{e:an-inverse-rvf} and change of variable $s =
  \rvf^{-1}(t)$,
  \begin{align*}
    \sum_{k\le n<\rvf(\delta x)} 
    n^k \tail \cdf(\zeta_{n,x})^k/a_n
    &\asymp
    \int_k^{\rvf(\delta x)}
    \frac{
      t^k \tail \cdf(\rvf^{-1}(t)^{1-\gamma} x^\gamma)^k
    }
    {\rvf^{-1}(t)}\,\dd t
    \\
    &=
    \int_{\rvf^{-1}(k)}^{\delta x}
    \frac{\rvf(s)^k \tail \cdf(s^{1-\gamma} x^\gamma)^k}
    {s} \rvf'(s)\,\dd s.
  \end{align*}
  Then by \eqref{e:rvf-diff}, for $x\ge \rvf^{-1}(1+k)/\delta$, 
  \begin{align*}
    \sum_{k\le n<\rvf(\delta x)} 
    n^k \tail \cdf(\zeta_{n,x})^k/a_n
    &\ll
    \int_{\rvf^{-1}(k)}^{\delta x}
    \frac{\tail \cdf(s^{1-\gamma} x^\gamma)^k}
    {s} \frac{\rvf(s)^{k+1}}{s} \,\dd s
    \\
    &\ll
    \delta^\alpha
    \rvf(x)
    \int_{\rvf^{-1}(k)}^{\delta x}
    \frac{\tail \cdf(s^{1-\gamma} x^\gamma)^k \rvf(s)^k}
    {s^2} \,\dd s.
    \\
    &\ll
    \delta^\alpha
    \rvf(x)
    \int_{\rvf^{-1}(k)}^{\delta x} \nth{s^2}
    \Sbr{
      \frac{\rvf(s)}{\rvf(s^{1-\gamma} x^\gamma)}
    }^k \,\dd s.
  \end{align*}
  For $x\gg 1/\delta$ and $\rvf^{-1}(k)\le s\le \delta x$, by Potter's
  Theorem  \cite[] [Th.~1.5.6]{bingham:89}, $\rvf(s) /
  \rvf(s^{1-\gamma} x^\gamma) \ll_b (s/x)^{b\gamma}$.  Then by $k\gamma
  b>1$, 
  \begin{align*}
    \int_{\rvf^{-1}(k)}^{\delta x} \nth{s^2}
    \Sbr{
      \frac{\rvf(s)}{\rvf(s^{1-\gamma} x^\gamma)}
    }^k \,\dd s
    &\ll_b
    \int_{\rvf^{-1}(k)}^{\delta x} 
    \frac{(s/x)^{k\gamma b}}{s^2}\,\dd s
    \ll
    \nth x\frac{\delta^{k\gamma b-1}}{k\gamma b-1}.
  \end{align*}
  Combining the above two displays,
  \begin{align*}
    \sum_{k\le n<\rvf(\delta x)} 
    n^k \tail \cdf(\zeta_{n,x})^k/a_n
    \ll_b \frac{\rvf(x)}{x}
    \frac{\delta^{k\gamma b+\alpha-1}}{k\gamma b-1},
  \end{align*}
  which together with \eqref{e:finite-k2} leads to
  \eqref{e:finite-k}.

  2) By $\zeta_{m,z}\ge a_1^{1-\gamma} z^\gamma = z^\gamma$,
  $\pr{X_{m:k}>\zeta_{m,z}} \le m^k \tail\cdf(z^\gamma)^k \le n^k
  \tail\cdf(x^\gamma)^k$ for $z\ge x$ and $m\le n$.  Then from
  \begin{align*}
    \frac{x}{\rvf(x)} \sum_{n\le \ell(x)} n^k \tail\cdf(x^\gamma)^k
    \ll_k
    \frac{x \ell(x)^{k+1}}{\rvf(x) \rvf(x^\gamma)^k} \in
    \RV_{1-\alpha- \alpha\gamma k},
  \end{align*}
  the proof is complete.
\end{proof}

For Lemmas \ref{l:small-sum} and \ref{l:large-sum}, the following
local large deviation bound will be used.
\begin{lemma} \label{l:local-ldp}
  Let \eqref{e:tail-RV} hold and $\alpha \in (0,1)$.  Given $s_0>0$,
  there is a constant $c\ge 0$ only depending on $\{\cdf, \rvf,
  s_0\}$, such that for all $x>0$, $s\ge s_0$, and $n\ge 1$,
  \begin{align*}
    \pr{S_n \in x+I\AND X_{n:1}\le s}
    \ll (1/s + 1/a_n) e^{-x/s+ c n/\rvf(s)}.
  \end{align*}
\end{lemma}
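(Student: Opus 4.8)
The plan is to obtain the bound by an exponential change of measure at rate $1/s$ together with a truncation at level $s$, reducing the whole statement to a concentration-function estimate for the tilted law. Put $M=\mean[e^{X/s}\cf{X\le s}]$ and let $\hat\cdf$ be the tilted, truncated probability measure $\hat\cdf(\dd z)=M^{-1}e^{z/s}\cf{z\le s}\,\cdf(\dd z)$; let $\hat S_n=\hat X_1+\cdots+\hat X_n$ with the $\hat X_i$ i.i.d.\ $\sim\hat\cdf$, and write $\hat Q_n=\sup_y\pr{\hat S_n\in y+I}$ for its concentration function. On the event $\{S_n\in x+I,\ X_{n:1}\le s\}$ every increment is $\le s$, so the standard exponential-tilting identity yields
\[
  \pr{S_n\in x+I\AND X_{n:1}\le s}
  =M^n\,\mean\bigl[e^{-\hat S_n/s}\,\cf{\hat S_n\in x+I}\bigr]
  \le M^n\,e^{-x/s}\,\hat Q_n .
\]
It therefore suffices to show, uniformly over $x>0$, $s\ge s_0$, $n\ge1$, that (i) $M^n\le e^{cn/\rvf(s)}$ for some $c=c(\cdf,\rvf,s_0)\ge0$, and (ii) $\hat Q_n\ll 1/s+1/a_n$.

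For (i), the inequality $e^u-1\le(e-1)u$ on $[0,1]$ gives $M\le\pr{X\le s}+\tfrac{e}{s}\,\mean[X\cf{0<X\le s}]\le 1+\tfrac{e}{s}\int_0^s\tail\cdf(u)\,\dd u$, and, since $\tail\cdf\sim 1/\rvf$ is regularly varying of index $-\alpha\in(-1,0)$, Karamata's theorem gives $\int_0^s\tail\cdf(u)\,\dd u\ll s/\rvf(s)$ for $s\ge s_0$; hence $M\le 1+c/\rvf(s)$ and $M^n\le e^{cn/\rvf(s)}$.

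Step (ii) is the crux. For $s$ in a bounded range $[s_0,s_1]$ the trivial bound $\hat Q_n\le 1\ll 1/s$ already suffices, so one may assume $s\ge s_1$ with $s_1$ large enough that $\hat\cdf$ is non-degenerate and carries the same span structure as $\cdf$. I would then apply Esseen's inequality for the concentration function (in lattice form at the span of $\cdf$ when $\cdf$ is arithmetic) to get $\hat Q_n\ll\int_{|t|\le 1/h}|\hat\psi_s(t)|^n\,\dd t$ with $\hat\psi_s$ the characteristic function of $\hat\cdf$, and bound $|\hat\psi_s(t)|^n\le\exp\!\bigl(-\Flr{n/2}\,(1-|\hat\psi_s(t)|^2)\bigr)$. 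The heart of the matter is a lower bound for $1-|\hat\psi_s(t)|^2$ in three ranges of $|t|$. For $|t|\le C_0/s$ the tilting and truncation only affect the quadratic scale, so $1-|\hat\psi_s(t)|^2\gg t^2\,\mean[X^2\cf{0\le X\le s}]\asymp t^2 s^2/\rvf(s)$, the last estimate again by Karamata. For $C_0/s\le|t|\le t_1$, with $t_1=t_1(\cdf,\rvf)$ small, the cut-off acts at scale $1/|t|\le s/C_0$ and is invisible, so, exactly as for $\cdf$ itself, $1-|\hat\psi_s(t)|^2$ dominates a difference of $\tail\cdf$ at two points of order $1/|t|$, which is $\asymp 1/\rvf(1/|t|)$ by the regularly varying tail. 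For $t_1\le|t|\le 1/h$ one has $|\hat\psi_s(t)|\le 1-\delta_0$ uniformly in $s\ge s_1$, because $\hat\psi_s\to\psi$ (the characteristic function of $\cdf$) uniformly as $s\toi$ by dominated convergence with the $t$-free dominating function $e\,\cf{\cdot\le s}$, while $|\psi|$ stays below $1$ on the compact set $\{t_1\le|t|\le 1/h\}$. Integrating, the first range contributes $\ll\min\!\bigl(1/s,\,\sqrt{\rvf(s)/n}/s\bigr)\ll 1/s$ (bounding $|\hat\psi_s|\le 1$ on the one hand and completing the Gaussian integral on the other); the second, after the substitution $u=1/|t|$ and a Potter bound $\rvf(u)\ll n\,(u/a_n)^{\alpha-\epsilon}$ for $u\le a_n$, contributes $\ll 1/a_n$; the third decays exponentially in $n$ and so is $\ll 1/a_n$. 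Summing gives $\hat Q_n\ll 1/s+1/a_n$, and combining (i), (ii) and the tilting bound yields the lemma.

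I expect the main obstacle to be step (ii), and within it the uniformity over $s\ge s_1$ of the characteristic-function estimates — especially extracting the exact $1/\rvf(1/|t|)$ lower bound in the middle range from the regularly varying tail while keeping all constants free of $s$, and treating the lattice and non-lattice cases in a single argument. It is also convenient to dispose first of the range $x\le c\,ns/\rvf(s)$ — equivalently $e^{-x/s+cn/\rvf(s)}\ge1$ — directly from the local limit theorem bound $\pr{S_n\in x+I}\ll 1/a_n\ll(1/s+1/a_n)e^{-x/s+cn/\rvf(s)}$, so that the change-of-measure argument is only invoked where the exponential factor genuinely decays.
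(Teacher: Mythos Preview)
Your strategy is essentially the one the paper uses: tilt by $e^{x/s}$ on $\{X\le s\}$, bound the moment-generating factor by Karamata, and then control the concentration function of the tilted walk via an Esseen-type inequality and a lower bound on $1-|\hat\psi_s(t)|^2$. So the overall architecture matches.

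Where the paper's execution is cleaner than yours is in step~(ii). Rather than splitting into three $|t|$-ranges, the paper does two things. First, it applies Petrov's concentration inequality with integration window $|t|\le 1/\theta$ for a \emph{fixed} $\theta$ (chosen once and for all from $\cdf$), so that the contribution of $|t|\le 1/(s\vee\theta)$ is trivially $\ll 1/s$ and there is no ``large~$|t|$'' range to worry about; this removes your third range and the need for the uniform-convergence argument $\hat\psi_s\to\psi$ (which, as you note, would also require separate handling of the lattice span). Second, for the remaining range $1/s\le|t|\le 1/\theta$ it symmetrizes, writing $|\hat\psi_s(t)|^2=\mean\cos t\xi$ with $\xi=\hat X_1-\hat X_2$, and uses the single inequality
\[
  1-\mean\cos t\xi
  \;\ge\;
  C\,t^2\,\mean\bigl[(\hat X_1-\hat X_2)^2\cf{0<\hat X_1,\hat X_2\le 1/t}\bigr]
  \;\gg\;
  t^2\bigl(\mu_2(1/t)\mu_0(1/t)-\mu_1(1/t)^2\bigr)
  \;\gg\;
  1/\rvf(1/t),
\]
valid uniformly in $s$ because on $\{0<X_i\le 1/t\}\subset\{0<X_i\le s\}$ the tilt satisfies $e^{(X_1+X_2)/s}\ge 1$ and $\psi(s)\le e$. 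This merges your first and second ranges into a single estimate and makes the $s$-uniformity transparent; your sketch of the middle range (``dominates a difference of $\tail\cdf$ at two points'') is pointing at the right phenomenon but is the place where the argument is thinnest, and the symmetrized truncated-variance route is what makes it go through cleanly. Your closing remark about first disposing of the range $x\le c\,ns/\rvf(s)$ is unnecessary: the tilting identity is an equality and holds for all $x>0$, so no preliminary case split is needed.
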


\begin{remark}
  The bound is uniform in all $n\ge 1$ and $s\ge s_0$, and it only
  needs an assumption on the right-tail of $\cdf$.  The implicit
  constant in ``$\ll$'' is independent of $s_0$.  The proof of the
  bound follows \cite{denisov:08:ap}; also see \cite{doney:97,
    gouezel:11} for results restricted to the arithmetic or operator
  cases.
\end{remark}

\begin{proof}
  For $p\ge 0$ and $s>0$, define $\mu_p(s) = \mean[X^p\cf{0<X\le s}]$.
  By integration by parts and Karamata's Theorem \cite[][Th.~1.5.11]
  {bingham:89}, for $p\ge 1$,
  \begin{align}
    \mu_p(s)
    &=
    p \int_0^s \tail\cdf(u) u^{p-1}\,\dd u - \tail\cdf(s) s^p
    \sim
    \frac{\alpha s^p}{(p-\alpha) \rvf(s)}\toi, \quad s\toi.
    \label{e:Karamata-p}
  \end{align}
  As a result, for $s\ge 1$, $\mu_2(s)/\mu_1(s)^2 \asymp \rvf(s)$.
  Since $\mu_0(s)\to \xpp>0$ as $s\toi$, one can fix $\theta\ge 1$
  which only depends on $\{\cdf, \rvf\}$, such that
  \begin{align} \label{e:cut-moment}
    \mu_2(s)\mu_0(s) > 2\mu_1(s)^2, \quad s\ge \theta.
  \end{align}

  We now follow the proof for Lemma~7.1(iv) and Proposition~7.1 in
  \cite{denisov:08:ap}.  The case $\cdf(s)=0$ is trivial.  Let
  $\cdf(s)>0$ and $\psi(s) = \mean[e^{X/s} \cf{X\le s}]$.  Then
  $0<\psi(s)\le e$ and $G_s(\dd x) = \psi(s)^{-1}   e^{x/s} \pr{X\in
    \dd x\AND X\le s}$ is a probability measure.  By $\ln\mean Z =
  \ln[1+\mean(Z-1)] \le \mean(Z-1)$ for $Z\ge 0$
  \begin{align*}
    \ln \psi(s)
    \le
    \mean[e^{X/s}\cf{X\le s}-1]
    &\le
    \mean[(e^{X/s}-1)\cf{X\le s}]
    \\
    &\le
    \mean[(e^{X/s}-1)\cf{0<X\le s}].
  \end{align*}
  Since $e^x-1\le 2 x$ for $x\in [0,1]$, $\ln \psi(s) \le 2 s^{-1}
  \mean[X\cf{0<X\le s}] = 2 \mu_1(s)/s$.  Then by
  \eqref{e:Karamata-p}, there is $c\ge 0$ only 
  depending on $\{\cdf,\rvf, s_0\}$, such that $\ln\psi(s) \le
  c/\rvf(s)$ for $s\ge s_0$.  Let $\tilde S_n = \cum{\tilde X} n$ with
  $\tilde X_i$ i.i.d.\ $\sim G_s$.  Then
  \begin{align*}
    \pr{S_n \in x+I\AND X_{n:1}\le s}
    &=
    \psi(s)^n
    \mean[e^{-\tilde S_n/s}\cfx{\tilde S_n\in x+I}]
    \\
    &\le
    \psi(s)^n e^{-x/s} \pr{\tilde S_n \in x + I}
    \\
    &\le
    e^{-x/s + c n/\rvf(s)} \pr{\tilde S_n \in x + I}.
  \end{align*}
  Denote $f_s(t) = \mean[e^{\iunit t\tilde X}]$ with $\tilde X \sim
  G_s$.  By \cite{petrov:95}, Lemma 1.16, 
  \begin{align*}
    \pr{\tilde S_n \in x + I}
    &\le
    (96/95)^2
    (h\vee \theta) \int_{-1/\theta}^{1/\theta}|f_s(t)|^n \,\dd t
    \\
    &\ll
    1/s + \int_{1/(s\vee \theta)}^{1/\theta} |f_s(t)|^n \,\dd t.
  \end{align*}
  From the displays, it is seen that to finish the proof, it suffices
  to show that for $s\ge \theta$,
  \begin{align} \label{e:tilted-S}
    \int_{1/s}^{1/\theta} |f_s(t)|^n \,\dd t \ll 1/a_n.
  \end{align}
  
  Let $\xi = \tilde X_1 - \tilde X_2$.  Then $|f_s(t)|^2
  = \mean[e^{\iunit t\xi}] = \mean\cos t\xi>0$.  Since $x\le
  \exp\{-(1-x^2)/2\}$,
  \begin{align} \label{e:abs-tilde-X}
    |f_s(t)|
    =
    \sqrt{\mean\cos t\xi}
    \le
    \exp\Cbr{-(1-\mean\cos t\xi)/2}.
  \end{align}
  Since $1-\cos z\ge C z^2$ for $|z|\le 1$, where $C>0$ is an
  absolute constant, for $t\in [1/s,1/\theta]$,
  \begin{align*}
    1-\mean\cos t\xi
    &\ge
    \mean[(1-\cos t\xi)\cf{|\xi|\le 1/t}]
    \\
    &\ge
    C t^2 \mean[(\tilde X_1 - \tilde X_2)^2 \cfx{0< \tilde X_1,
      \tilde X_2\le 1/t}]
    \\
    &=
    C \psi(s)^{-2} t^2 \mean[(X_1 - X_2)^2 e^{(X_1+X_2)/s}
    \cf{0<X_1, X_2\le 1/t}]
    \\
    &\ge
    (C/e^2) t^2 \mean[(X_1 - X_2)^2 \cf{0<X_1, X_2\le 1/t}].
  \end{align*}
  Since the expectation on the last line is $2\mu_2(1/t) \mu_0(1/t) -
  2\mu_1(1/t)^2$, by \eqref{e:Karamata-p} and \eqref{e:cut-moment}, 
  \begin{align*}
    1-\mean\cos t\xi
    \gg
    t^2 \mu_2(1/t) \mu_0(1/t) \gg 1/\rvf(1/t).
  \end{align*}
  Combining this with \eqref{e:abs-tilde-X}, for some constant
  $b>0$,
  \begin{align*}
    \int_{1/s}^{1/\theta}  |f_s(t)|^n \,\dd t
    \le
    \int_{1/s}^{1/\theta} \exp\Cbr{-\frac{b n}{\rvf(1/t)}}\,\dd t.
  \end{align*}
  By Potter's Theorem \cite[][Th.~1.5.6]{bingham:89},
  \begin{align*}
    \frac{n}{\rvf(1/t)}
    =\frac{\rvf(a_n)}{\rvf(1/t)}
    \gg
    \min\{(a_n t)^{\alpha/2}, (a_n t)^{3\alpha/2}\}.
  \end{align*}
  On the other hand, for any $c>0$ and $q>0$,
  \begin{align*}
    \int_{1/s}^{1/\theta} e^{-c (a_n t)^q}\,\dd t
    = \nth{a_n} \int_{a_n/s}^{a_n/\theta} e^{-c t^q}\,\dd t
    \le \nth{a_n} \intzi e^{-c t^q}\,\dd t < \infty.
  \end{align*}
  Then \eqref{e:tilted-S} easily follows.
\end{proof}

\comment{The next step is to use Lemma \ref{l:local-ldp} to show that among
short sequences with a given number of large jumps, those with a small
sum of large jumps can be ignored.}  For $k\ge 0$, $n\ge 1$ and $x>0$,
denote
\begin{align}  \label{e:events}
  \begin{split}
    E_{n,k,x}
    &=
    \{S_n \in x+I\AND X_{n:k}>\zeta_{n,x}\ge X_{n:k+1}\},
    \\
    \Gamma_{n,k,x}
    &=
    \{S_n\in x + I\AND X_i>\zeta_{n,x}\ge X_j \text{ for all }
    i\le k < j\},
  \end{split}
\end{align}
where both sets are defined to be $\emptyset$ if $k>n$.

\begin{lemma} \label{l:small-sum}
  Fix $k\ge 0$, $\rx\in (0,1)$, and $M>1$.  Then for $0<\delta
  \ll_{\rx,M} 1$,
  \begin{align*}
    \Lsup_{x\toi}
    \frac{x}{\rvf(x)} \sum_{n<\rvf(\delta x)}
    \sup_{z\ge x\AND m\le n}
    \pr{
      E_{m,k,z}\AND
      S_{m:k}\le (1-\rx)z
    } \ll_k \delta^M.
  \end{align*}
\end{lemma}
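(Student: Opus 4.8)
The plan is to bound $\pr{E_{m,k,z}\AND S_{m:k}\le(1-\rx)z}$ for each fixed $m,z$ with $m\le n<\rvf(\delta x)$ and $z\ge x$, and then sum. For $m\le k$ this event is empty (if $k\ge m$ then $S_{m:k}=S_m$ cannot be both $>z$ and $\le(1-\rx)z$, and if $k>m$ then $\Gamma_{m,k,z}=\emptyset$), so assume $m\ge k+1$. First I would decompose the event according to which $k$ of the indices $1,\dots,m$ carry the large jumps; by exchangeability of $\eno Xm$ this gives
\begin{align*}
  \pr{E_{m,k,z}\AND S_{m:k}\le(1-\rx)z}
  =\binom mk\,\pr{\Gamma_{m,k,z}\AND X_1+\cdots+X_k\le(1-\rx)z}.
\end{align*}
On $\Gamma_{m,k,z}$ write $T=X_1+\cdots+X_k$ (the large part, $>k\zeta_{m,z}$) and $S'=X_{k+1}+\cdots+X_m$ (the small part, whose $m-k$ summands are $\le\zeta_{m,z}$); these are independent, $S_m=T+S'$, and on $\{T\le(1-\rx)z\}$ we have $z-T\ge\rx z$. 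Conditioning on $T$ and applying Lemma~\ref{l:local-ldp} to $S'$ with $s=\zeta_{m,z}$ (legitimate for $x$ large since $\zeta_{m,z}\ge x^\gamma$) bounds $\pr{S'\in(z-T)+I,\ \max_{k<i\le m}X_i\le\zeta_{m,z}\gv T}$ by a constant times $(1/\zeta_{m,z}+1/a_{m-k})e^{-(z-T)/\zeta_{m,z}+c(m-k)/\rvf(\zeta_{m,z})}$. Since $z\ge a_m$ gives $\zeta_{m,z}\ge a_m$ and hence $\rvf(\zeta_{m,z})\ge\rvf(a_m)=m$, the correction $e^{c(m-k)/\rvf(\zeta_{m,z})}$ is bounded; using $z-T\ge\rx z$, integrating $T$ out against $\pr{X_1,\dots,X_k>\zeta_{m,z}}=\tail\cdf(\zeta_{m,z})^k$, and then using $\binom mk\ll_k m^k$ together with $\tail\cdf\sim1/\rvf$, I arrive at the key per-term estimate
\begin{align*}
  \pr{E_{m,k,z}\AND S_{m:k}\le(1-\rx)z}\ll_k
  m^k\Grp{\frac1{\zeta_{m,z}}+\frac1{a_{m-k}}}\frac{e^{-\rx z/\zeta_{m,z}}}{\rvf(\zeta_{m,z})^k}.
\end{align*}

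Next I would split the range of $m$ at a constant $m_0$, depending only on $k$ and $\{\cdf,\rvf\}$, chosen so that $m\ge m_0$ forces both $m-k\ge m/2$ (so $a_{m-k}\asymp a_m$) and $a_m$ above the threshold needed for Potter's theorem; together with $1/\zeta_{m,z}\le1/a_m$ this yields $1/\zeta_{m,z}+1/a_{m-k}\ll1/a_m$. For the finitely many terms with $k<m<m_0$ one has $\zeta_{m,z}\asymp_k z^\gamma$, so the per-term estimate is $\ll_k e^{-c_k\rx z^{1-\gamma}}$; summing over the $\le\rvf(\delta x)$ values of $n$ and multiplying by $x/\rvf(x)$ gives $\ll_k x\delta^\alpha e^{-c_k\rx x^{1-\gamma}}$, whose $\Lsup_{x\toi}$ is $0$. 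For $m\ge m_0$, writing $m=\rvf(a_m)$ and applying Potter's theorem to $\rvf(\zeta_{m,z})=\rvf\bigl(a_m(z/a_m)^\gamma\bigr)$ gives $m^k/\rvf(\zeta_{m,z})^k\ll_k(a_m/z)^A$ with $A:=k\gamma(\alpha-\rx')\ge0$ for a small fixed $\rx'\in(0,\alpha)$, so with $\rho:=a_m/z$ (note $\rho<\delta$ because $a_m<\delta x\le\delta z$) the per-term estimate becomes $\ll_k a_m^{-1}\rho^{A}e^{-\rx\rho^{-(1-\gamma)}}$.

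The map $\rho\mapsto\rho^{A}e^{-\rx\rho^{-(1-\gamma)}}$ is increasing on $(0,\infty)$, and the resulting function of $a_m$ on $(0,\delta x)$ is also increasing once $\delta<(\rx(1-\gamma))^{1/(1-\gamma)}$ (a bound uniform in $k$ since $A\ge0$); taking the supremum first over $z\ge x$ and then over $m$ yields
\begin{align*}
  \sup_{m_0\le m\le n,\ z\ge x}\pr{E_{m,k,z}\AND S_{m:k}\le(1-\rx)z}
  \ll_k\frac{\delta^{A-1}}{x}\,e^{-\rx\delta^{-(1-\gamma)}},
\end{align*}
uniformly in $n<\rvf(\delta x)$. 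Since the outer sum has at most $\rvf(\delta x)\asymp\delta^\alpha\rvf(x)$ terms, the $m\ge m_0$ part of the quantity in the lemma is $\ll_k\frac{\rvf(\delta x)}{\rvf(x)}\delta^{A-1}e^{-\rx\delta^{-(1-\gamma)}}$, whose $\Lsup_{x\toi}$ is $\ll_k\delta^{\alpha+A-1}e^{-\rx\delta^{-(1-\gamma)}}\le\delta^{\alpha-1}e^{-\rx\delta^{-(1-\gamma)}}$; as $1-\gamma>0$, this is $\le\delta^M$ once $\delta\ll_{\rx,M}1$, and combined with the vanishing $m<m_0$ part this is the claim.

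The step I expect to be delicate is the per-term estimate: one must resist bounding $1/a_{m-k}$ by $1$ or $\rvf(\zeta_{m,z})^{-k}$ by $m^{-k}$. It is precisely the cancellation of the combinatorial factor $m^k$ against $\rvf(\zeta_{m,z})^{-k}$ (via Potter) together with the genuine smallness of $1/a_{m-k}$ for $m\ge m_0$ that removes a spurious factor of $x$; on its own the exponential $e^{-\rx z/\zeta_{m,z}}$ only decays like $e^{-\rx\delta^{-(1-\gamma)}}$, which would be dwarfed by the prefactor $\tfrac{x}{\rvf(x)}\rvf(\delta x)\asymp x\delta^\alpha$.
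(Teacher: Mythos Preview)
Your proof is correct and follows essentially the same route as the paper: both reduce via exchangeability to $\Gamma_{m,k,z}$, apply Lemma~\ref{l:local-ldp} to the sum of the small steps, cancel the combinatorial factor $m^k$ against $\tail\cdf(\zeta_{m,z})^k$, and exploit $z/\zeta_{m,z}\ge\delta^{-(1-\gamma)}$ to get super-polynomial decay. The differences are cosmetic: the paper absorbs small $m$ into the $\ll_k$ constant rather than splitting at $m_0$, bounds $m^k\tail\cdf(\zeta_{m,z})^k\ll 1$ directly rather than extracting your $\rho^A$ factor via Potter, and sums over $n$ by an integral and change of variable $s=x/\rvf^{-1}(t)$ instead of your crude ``sup times number of terms'' bound---but the resulting estimates are equivalent for the purpose at hand.
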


\begin{remark}
  If $X>0$, then to prove the SRT for $X>0$, it suffices to establish
  \begin{align*}
    \Lsup_{x\toi}
    \frac{x}{\rvf(x)} \sum_{n<\rvf(\delta x)}
    \pr{
      E_{n,k,x}\AND
      S_{n:k}\le (1-\rx)x
    } \ll_k \delta^M.
  \end{align*}
\end{remark}

\begin{proof}
  Let $\Delta_{n,x} = \pr{\Gamma_{n,k,x}\AND
    S_k\le (1-\rx)x}$.   Since $\pr{E_{n,k,x}\AND S_{n:k}\le (1-\rx)
    x} \le n^k \Delta_{n,x}$, it suffices to show for $0< \delta
  \ll_{\rx,M} 1$,
  \begin{align} \label{e:small-sum-0}
    \Lsup_{x\toi}
    \frac{x}{\rvf(x)} \sum_{n<\rvf(\delta x)}
    \sup_{z\ge x\AND m\le n}
    m^k \Delta_{m,z} \ll_k \delta^M.
  \end{align}
  By definition of $\Gamma_{m,k,z}$, it suffices to consider $m\ge
  k$.    For $j\ge 1$, let $X'_j = X_{k+j}$ and $S'_j =
  \cum{X'} j$.   If $k\ge 1$, then for $z\ge 1$,
  \begin{align}
    \Delta_{m,z}
    &=
    \pr{S_k + S'_{m-k}\in z + I\AND X_{k:k} > \zeta_{m,z} \ge
      X'_{m-k:1}\AND S_k \le (1-\rx)z
    }
    \nonumber \\
    &\le
    \pr{
      S'_{m-k} \in (z-S_k+I)\cap [\rx z,\infty)\AND
      X_{k:k}>\zeta_{m,z} \ge X'_{m-k:1}
    }
    \nonumber \\
    &\le
    \sup_{t\ge \rx z} \pr{
      S'_{m-k} \in t+I\AND
      X_{k:k}>\zeta_{m,z} \ge X'_{m-k:1}
    }
    =
    \tail\cdf(\zeta_{m,z})^k M_{m,z},
    \label{e:small-sum-1}
  \end{align}
  where $M_{m,z} = \sup_{t\ge \rx z} \pr{S_{m-k} \in t+I\AND
    X_{m-k:1}\le \zeta_{m,z}}$.   As $\zeta_{m,z}\ge 1$,
  by Lemma \ref{l:local-ldp}, there is a constant $c$ only depending
  on  $\{\cdf, \rvf\}$, such that 
  \begin{align} \label{e:small-sum-2}
    M_{m,z}
    \ll_k
    (1/\zeta_{m,z} + 1/a_m) e^{-\rx z/\zeta_{m,z} + 
      c m /\rvf(\zeta_{m,z})}.
  \end{align}
  It is easy to see \eqref{e:small-sum-2} still holds if $k=0$ and
  $m\ge 1$.  For $\delta\in (0,1)$, $z\ge x\ge 1$ and $m\le n<
  \rvf(\delta x)$,
  \begin{align*}
    a_m/\zeta_{m,z}
    &=
    (a_m/z)^\gamma \le (a_n / x)^\gamma \le \delta^\gamma,
    \\
    m/\rvf(\zeta_{m,z})
    &=
    \rvf(a_m)/\rvf(\zeta_{m,z}) \ll \delta^{\alpha\gamma/2},
    \\
    z/\zeta_{m,z}
    &=
    (z/a_m)^{1 - \gamma} \ge (x/a_m)^{1-\gamma} \ge
    (x/a_n)^{1-\gamma} \ge \delta^{\gamma-1}.
  \end{align*}
  Combining the bounds with \eqref{e:small-sum-2}, for all $\delta>0$
  small enough and $z\ge 1$ large enough,
  \begin{align*}
    M_{m,z}
    \ll_k
    (1/a_m) e^{-(\rx/2)(x/a_m)^{1-\gamma}}
    =
    x^{-1} h((x/a_m)^{1-\gamma}),
  \end{align*}
  where $h(u) =u^{1/(1-\gamma)}e^{-\rx u/2}$.  As already seen,
  $(x/a_m)^{1-\gamma} \ge (x/a_n)^{1-\gamma} \ge 1/\delta^{1-\gamma}$.
  Since $h(u)$ is decreasing for $u$ large enough, for
  $0<\delta\ll_{\rx,\gamma} 1$,
  \begin{align*}
    M_{m,z}
    \ll_k
    x^{-1}
    h((x/a_n)^{1-\gamma}) = (1/a_n) e^{-(\rx/2)(x/a_n)^{1-\gamma}}.
  \end{align*}
  Then by \eqref{e:small-sum-1} and $m\tail\cdf(\zeta_{m,z}) \ll
  \rvf(a_m)/\rvf(\zeta_{m,z}) \ll 1$,
  \begin{align*}
    m^k \Delta_{m,z}
    \ll_k
    \frac{m^k \tail\cdf(\zeta_{m,z})^k}{a_n}
    e^{-(\rx/2)(x/a_n)^{1-\gamma}}
    \le
    \nth{a_n}
    e^{-(\rx/2)(x/a_n)^{1-\gamma}}.
  \end{align*}
  As a result, given $\delta>0$ small enough, for large $x>0$,
  \begin{align*}
    \sum_{n<\rvf(\delta x)}
    \sup_{z\ge x\AND m\le n}
    m^k \Delta_{m,z}
    &\ll_k
    \sum_{n<\rvf(\delta x)}
    \nth{a_n} e^{-(\rx/2)(x/a_n)^{1-\gamma}}
    \\
    &\ll
    \int_1^{\rvf(2\delta x)}
    \nth{\rvf^{-1}(t)}
    e^{-(\rx/2)(x/\rvf^{-1}(t))^{1-\gamma}}
    \,\dd t,
  \end{align*}
  where the second line follows from \eqref{e:an-inverse-rvf}.  By
  change of variable $s=x/\rvf^{-1}(t)$ and \eqref{e:rvf-diff}, the
  last integral is no greater than
  \begin{align*}
    \nth x
    \int_{1/2\delta}^x s e^{-(\rx/2)s^{1-\gamma}}\,
    |\dd \rvf(x/s)|
    &=
    \nth x
    \int_{1/2\delta}^x
    (x/s)\rvf'(x/s) e^{-(\rx/2)s^{1-\gamma}}
    \,\dd s
    \\
    &\ll
    \nth{x}
    \int_{1/2\delta}^x
    \rvf(x/s) e^{-(\rx/2) s^{1-\gamma}}\,\dd s
    \\
    &\ll
    \frac{\delta^\alpha \rvf(x)}{x}
    \int_{1/2\delta}^\infty e^{-(\rx/2) s^{1-\gamma}} \,\dd s.
  \end{align*}
  Therefore,
  \begin{align*}
    \frac{x}{\rvf(x)} \sum_{n<\rvf(\delta x)}
    \sup_{z\ge x\AND m\le n} m^k \Delta_{m,z}
    \ll_k
    \delta^\alpha \int_{1/2\delta}^\infty e^{-(\rx/2) s^{1-\gamma}}
    \,\dd s,
  \end{align*}
  which yields \eqref{e:small-sum-0}, as the RHS is
  $O(\delta^M)$ for $0<\delta \ll_{\rx,\gamma,M} 1$.
\end{proof}

The last lemma in this subsection requires $X>0$.  Let $r\in (0,1]$ be
fixed.  Define 
\begin{align*}
  \tau_n = \sum_{i=1}^n \cf{X_i>r a_n},\quad
  S'_n = \sum_{i=1}^n X_i\cf{X_i> r a_n}.
\end{align*}
Since $n\tail\cdf(r a_n) \to r^{-\alpha}$ as $n\toi$ and $\tau_n \sim
\dbin(n, \tail\cdf(r a_n))$, letting $\theta = 2 r^{-\alpha}$, for all
$n\ge 1$ and $m\ge 0$, $\pr{\tau_n = m} \ll_r \theta^m/m!$.
Conditioning on $\tau_n=k$, $S_n-S'_n$ and $S'_n$ are independent such
that $S_n - S'_n \sim B_{n-k} = \sum_{i=1}^{n-k} b_i$ and $S'_n \sim
U_k = \sum_{i=1}^k u_i$, with $b_i$ i.i.d.\ $\sim \cf{x\le r
  a_n}\cdf(\dd x)/\cdf(r a_n)$ and $u_i$ i.i.d.\ $\sim \cf{x> r
  a_n} \cdf(\dd x) /\tail\cdf(r a_n)$.  In the definition of $b_i$, if
$\cdf(r a_n)=0$, then let $b_i\equiv 0$.

\begin{lemma} \label{l:large-sum}
  Let $X>0$.  Fix $k\ge 1$ and $\eta\in (0,1)$.  Then for all
  $0<\rx\le \eta/(k+1)$, $x\gg_{k,\eta} 1$, $n<\rvf(x)$, and
  $T\ge 0$, letting $c=(1-\rx)/k$,
  \begin{align*}
    \pr{
      E_{n,k,x}
      \AND
      S_{n:k} > (1-\rx)x
    }
    \ll_{k,r}
    \frac{n\tail\cdf(x)}{x}
    \Sbr{
      T + \nth{a_n} \sup_{c x \le t \le x + 2h}
      \Iof_{\eta, n,r}(t\AND T)
    }.
  \end{align*}
\end{lemma}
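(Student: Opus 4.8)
The plan is to condition on the $k$ jumps that exceed $\zeta_{n,x}$, single out the largest of them, and reduce the estimate to the local large‑deviation bound of Lemma~\ref{l:local-ldp} applied to the remaining small steps. Everything is done for $x\gg_{k,\eta}1$.

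\textbf{Set‑up.} On $E_{n,k,x}\cap\{S_{n:k}>(1-\rx)x\}$ one has $X_{n:1}\ge S_{n:k}/k>(1-\rx)x/k=cx$, so the global maximum exceeds $cx$. Picking the index carrying $X_{n:1}$ (a factor $n$) and the $k-1$ further indices whose jumps exceed $\zeta_{n,x}$ (a factor $\binom{n-1}{k-1}\le n^{k-1}$), write $N$ for the sum of those $k-1$ jumps and put $t=x-N$. From $N\le S_n-X_{n:1}<(1-c)x+h$ we get $t\asymp x$, and from $P:=S_n-S_{n:k}<\rx x+h$ together with the hypothesis $\rx\le\eta/(k+1)$ we get $P<\eta t$; hence the maximum $X_{n:1}=t+(S_n-x)-P$ lands in $(t-P)+I\subseteq((1-\eta)t,t+h]$, precisely the window over which $\Iof_{\eta,n,r}(t,\cdot)$ integrates. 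This is exactly where $\rx\le\eta/(k+1)$ is used.

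\textbf{The small steps.} Among the jumps that are $\le\zeta_{n,x}$, split further at level $ra_n$ into small steps ($\le ra_n$, with sum $P_{\mathrm{small}}$) and medium jumps ($\in(ra_n,\zeta_{n,x}]$, with sum $P_{\mathrm{med}}$), conditioning on the number of the latter; by the bound $\pr{\tau_n=m}\ll_r\theta^m/m!$ recalled in the set‑up, the medium jumps contribute a summable correction, and the point is to absorb them into the anchor, $t':=t-P_{\mathrm{med}}$ (still $\asymp x$, with $t'-P_{\mathrm{small}}>(1-\eta)t$), rather than into the large‑deviation estimate. Then apply Lemma~\ref{l:local-ldp} with $s=ra_n$ to $P_{\mathrm{small}}$: since $\rvf(ra_n)\asymp_r\rvf(a_n)=n$ uniformly in $n\ge1$, the exponential factor in that lemma is $\ll_r1$, giving $\pr{P_{\mathrm{small}}\in p+I,\ \text{all summands}\le ra_n}\ll_r a_n^{-1}e^{-p/(ra_n)}$.

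\textbf{Identifying the integral.} Conditioning on everything but $X_{n:1}$ and using $S_n\in x+I$, the maximum contributes $\cdf((t'-P_{\mathrm{small}})+I)$. Discretizing $P_{\mathrm{small}}$ over windows of length $h$, bounding the supremum of $[\omega(\cdot)-T]^+$ over such a window by its integral over a slightly larger window (legitimate because $\cdf(y+I)$ and $\tail\cdf(y)$ vary slowly on the scale $h$), and summing with the weight above, one obtains a bound of the form $a_n^{-1}\int e^{-(t'-y)/(ra_n)}\cdf(y+I)\,\dd y$ with $y$ essentially in $((1-\eta)t,t']$. Now write $\cdf(y+I)=y^{-1}\tail\cdf(y)\,\omega(y)$; since $y\asymp x$, $y^{-1}\tail\cdf(y)\asymp x^{-1}\tail\cdf(x)$, and $\omega(y)\le T+[\omega(y)-T]^+$, so the $T$‑part integrates to $\ll_r x^{-1}\tail\cdf(x)\,T$ (the exponential integral being $\le ra_n$), while the $[\omega-T]^+$‑part is $\ll_r x^{-1}\tail\cdf(x)\,a_n^{-1}\sup_{cx\le t''\le x+2h}\Iof_{\eta,n,r}(t'',T)$ once an $O(h)$ shift of the anchor is absorbed into the ``$+2h$'' and one pays a bounded factor for adjusting the integration window. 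Multiplying by the combinatorial factor $n^k$ and by $\tail\cdf(\zeta_{n,x})^{k-1}$ (from integrating out the $k-1$ non‑maximal large jumps), and using $(n\tail\cdf(\zeta_{n,x}))^{k-1}\ll_k1$ because $\zeta_{n,x}\ge a_n$ forces $\rvf(\zeta_{n,x})\ge n$, gives the claimed bound. The main obstacle is this last step: faithfully passing from the discrete/measure‑theoretic quantities (the measure $\cdf(\dd y)$, the law of $P_{\mathrm{small}}$) to the continuous integral defining $\Iof_{\eta,n,r}$ — in particular arranging the medium jumps to only translate the anchor without corrupting the $e^{-\cdot/(ra_n)}$ decay, tracking the $O(h)$ mismatches in the integration ranges, and dominating suprema of $[\omega-T]^+$ over length‑$h$ windows by integrals.
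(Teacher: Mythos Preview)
Your proposal is correct and follows the same overall strategy as the paper: isolate the maximum jump, absorb the remaining ``large-ish'' jumps into a shifted anchor, apply Lemma~\ref{l:local-ldp} to the small steps with $s=ra_n$, and convert the resulting geometric sum into $\Iof_{\eta,n,r}$. The organization differs in one respect worth noting. You keep both thresholds $\zeta_{n,x}$ and $ra_n$ in play: the $k-1$ non-maximal jumps above $\zeta_{n,x}$ are singled out with factor $\binom{n-1}{k-1}$, absorbed into the anchor, and then integrated away at cost $\tail\cdf(\zeta_{n,x})^{k-1}$, which you kill via $(n\tail\cdf(\zeta_{n,x}))^{k-1}\ll_k 1$; the medium jumps in $(ra_n,\zeta_{n,x}]$ are handled separately. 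The paper instead \emph{relaxes} the event at the outset to $\{S_n\in x+I,\ X_{n:1}>cx,\ S'_n>(1-\rx)x,\ \tau_n\ge 1\}$, forgetting the $\zeta_{n,x}$ structure entirely: conditioning on $\tau_n=m$, \emph{all} jumps above $ra_n$ (the $k$ large ones and the medium ones together) form $U_m$, the maximum is picked from $U_m$ (factor $m$, not $n$), and the rest of $U_m$ is the single quantity $Z$ that shifts the anchor. This avoids your extra estimate $(n\tail\cdf(\zeta_{n,x}))^{k-1}\ll 1$ and the two-layer decomposition. Two small imprecisions in your sketch: the local-LD prefactor is really $1/a_{n-k-m'}$ rather than $1/a_n$, recovered only after summing over $m'$ with weights $\theta^{m'}/m'!$ as in the paper's final display; and $t'-P_{\mathrm{small}}>(1-\eta)t$ should read $>(1-\eta)t'$. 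Neither affects the validity of the argument.
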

\begin{proof}
  The LHS is increasing in $\rx$ and is 0 if $n<k$.  So it suffices
  to prove the bound for $\rx = \eta/(k+1)$ and $n\ge k$.  If
  $S_{n:k}>(1-\rx) x$, then $X_{n:1}>c x$.  By $\zeta_{n,x} \ge r
  a_n$, if $X_{n:k} > \zeta_{n,x}$, then $\tau_n \ge k$, $S'_n \ge
  S_{n:k}$, and $X_{n:1}$ is the largest $X_i$ greater than $r a_n$.
  Thus, $\{E_{n,k,x}\AND S_{n:k} > (1-\rx) x\} \subset \{S_n\in
  x+I\AND X_{n:1}>c x\AND S'_n > (1-\rx) x\AND \tau_n\ge 1\}$ and so
  \begin{align}
    \pr{E_{n,k,x}\AND S_{n:k} > (1-\rx) x}
    &\le
    \sum_{m=1}^n P_m(x) \pr{\tau_n = m},
    \label{e:large-sum-E-G}
  \end{align}
  where $P_m(x) = \pr{B_{n-m} + U_m \in x+I\AND U_m > (1-\rx) x\AND
    u_{m:1}> c x}$.  Put $J = (-h, h)$ and $z_j = (1-\rx) x + j h$.
  By $I - I = J$,
  \begin{align*}
    P_m(x)
    &=
    \sumzi j
    \pr{B_{n-m}\in x - U_m + I\AND
      U_m \in z_j + I\AND u_{m:1} > c x
    }
    \\
    &\le
    \sumzi j
    \pr{
      B_{n-m} \in x - z_j + J
      \AND U_m \in z_j + I\AND u_{m:1}> c x
    }.
  \end{align*}
  With $n$ and $m$ being fixed for now, let $q_j(x) = \pr{B_{n-m} \in
    x - z_j + J}$.  Then by the independence of $B_{n-m}$ and $\eno u
  m$ and the union-sum inequality,
  \begin{align}
    P_m(x) \le
    m\sumzi j
    q_j(x)
    \pr{U_m \in z_j + I \AND u_1 > c x}.
    \label{e:P-m-1}
  \end{align}
  Let $X\sim\cdf$ be independent of $u_2, \ldots, u_m$ and $Z = u_2 +
  \cdots + u_m$.  Then
  \begin{align*}
    \pr{U_m \in z_j + I\AND u_m > c x}
    &=
    \pr{X + Z\in z_j + I\AND X > c x\gv X> r a_n}.
  \end{align*}
  Denoting $G_j(x) = \pr{X\in (z_j - Z + I)\cap (c x, \infty)}$, from
  $\tail\cdf(r a_n)\gg_r 1/n$, it follows that
  \begin{align*}
    \pr{U_m \in z_j + I\AND u_m > c x}\ll_r n G_j(x).
  \end{align*}
  Then \eqref{e:P-m-1} yields $P_m(x)\ll_r m n \sumzi j q_j(x)
  G_j(x)$.  Let
  \begin{align*}
    D_j(x) = \Sbr{\frac{x G_j(x)}{\tail\cdf(x)} - T}^+.
  \end{align*}
  By $G_j(x) \le \cbfrac{\tail\cdf(x)}{x} (T + D_j(x))$,
  \begin{align} \label{e:large-sum-1}
    P_m(x)
    \ll_r
    m n\cbfrac{\tail\cdf(x)}{x}
    \sumzi j q_j(x) [T+D_j(x)].
  \end{align}
  We need to bound $\sum q_j(x) (T+ D_j(x))$.  First, by
  $x-z_j + J  = \rx x - j h + (-h, h)$, there are at
  most two $j\ge 0$ such that $B_{n-m}\in x - z_j + I$.  Then
  \begin{align} \label{e:large-sum-2}
    \sumzi j q_j(x)
    \le
    \mean\Sbr{
      \sumzi j \cf{B_{n-m}\in x - z_j + J}
    } 
    \le 2.
  \end{align}
  To bound $\sum q_j(x) D_j(x)$, put $\jmath = \Cil{\rx x/h}$.  For
  $j>\jmath$, by $z_j-h\ge x$ and $B_{n-m}\ge 0$, $q_j(x)=0$.  For
  $0\le j \le \jmath$, if $\cdf(r a_n)>0$ and $1\le m<n$, then by 
  $\pr{X_{n-m:1} \le r a_n}\gg_r 1$,
  \begin{align*}
    q_j(x) 
    &=
    \pr{S_{n-m}\in x - z_j + J\AND X_{n-m:1} \le r a_n}/
    \pr{X_{n-m:1} \le r a_n}
    \\
    &\ll_r
    \pr{S_{n-m}\in x - z_j + J\AND X_{n-m:1} \le r a_n}.
  \end{align*}
  Then by Lemma \ref{l:local-ldp},
  \begin{align*}
    q_j(x)
    &\ll_r
    [1/a_{n-m} + 1/(r a_n)]  e^{-(x-z_j-h)/(r a_n) +
      O_r((n-m)/ \rvf(a_n))}
    \\
    &\ll_r
    (1/a_{n-m}) e^{-(x-z_j)/(r a_n)}.
  \end{align*}
  On the other hand, if $m=n$ or $\cdf(r a_n)=0$, then $B_{n-m}=0$ and
  $q_j(x) = \cf{x-z_j\in J}$, and in the case $\cdf(r a_n)=0$, $n\ll_r
  1$.  As a result, the above bound still holds.  Letting
  \begin{align*}
    H(x)=\sum_{0\le j\le \jmath} e^{-(x- z_j)/(r a_n)} D_j(x),
  \end{align*}
  it follows that
  \begin{align}
    \sumzi j q_j(x) D_j(x)
    \ll_r \frac{H(x)}{a_{n-m}}.
    \label{e:large-sum-3}
  \end{align}
  To bound $H(x)$, denote $\Lambda_x(t) = \pr{X\in (t + I)\cap (c x,
    \infty)}$.  By independence of $X$ and $Z$, $G_j(x) =
  \mean[\Lambda_x(z_j - Z)]$.  Then by Jensen's inequality,
  \begin{align*}
    D_j(x) \le \mean\Sbr{\frac{x \Lambda_x(z_j - Z)}{\tail\cdf(x)} - T
    }^+
  \end{align*}
  and hence $H(x)\le \mean V(Z, x)$,  where
  \begin{align*}
    V(s,x) = 
    \sum_{0\le j\le\jmath}
    e^{-(x - z_j)/(r a_n)}
    \Sbr{
      \frac{ x \Lambda_x(z_j-s)}{\tail\cdf(x)} - T
    }^+.
  \end{align*}
  Observe that if $s\ge (1-c) x + 2h$, then for $0\le j \le \jmath$,
  $(z_j - s+I)\cap (c x,\infty)=\emptyset$ and hence $V(s,x)=0$.  Thus
  by $Z\ge 0$, 
  \begin{align} \label{e:large-sum-4}
    H(x)
    \le
    \sup_{0\le s<(1-c)x + 2h} V(s,x)
  \end{align}
  Given $s\ge 0$, put $\imath = \min\{j\ge 0: z_j - s
  + h>c x\}$.  Then for $j<\imath$, $\Lambda_x(z_j - s) =  0$.
  Consequently, for $s\ge 0$,
  \begin{align*}
    V(s,x) = 
    \sum_{\imath\le j\le \jmath}
    e^{-(x - z_j)/(r a_n)}
    \Sbr{
      \frac{ x \Lambda_x(z_j - s)}{\tail\cdf(x)} - T
    }^+.
  \end{align*}
  For $\imath\le j\le \jmath$, one gets $z_j - s \le x - s + h \le x
  + h$ and $z_j - s\ge z_\imath -s > c x - h > x/(2k) - h$.  Then for
  $x\gg_k 1$,
  \begin{align*}
    \Lambda_x(z_j - s)
    \le
    \cdf(z_j - s + I)
    =
    \lfrac{\tail\cdf( z_j - s )\,\omega(z_j - s)}{(z_j - s)}
    \ll_k
    \lfrac{\tail\cdf(x) \,\omega(z_j - s)}{x}.
  \end{align*}
  Meanwhile, for $t\in z_j - s + I$, by $z_j-s + I \subset
  (t-h+I) \cup (t + I)$, 
  \begin{align*}
    \omega(z_j - s)
    &=
    \frac{(z_j - s) \cdf(z_j - s+I)}{\tail\cdf(z_j - s)}
    \\
    &\le
    \frac{(z_j - s) \cdf(t-h+I)}{\tail\cdf(z_j - s)}
    +
    \frac{(z_j - s) \cdf(t+I)}{\tail\cdf(z_j - s)}
    \ll
    \omega(t-h) + \omega(t).
  \end{align*}
  Combining the above three displays yields
  \begin{align*} 
    V(s, x)
    &\ll_k
    \sum_{\imath\le j\le\jmath}
    e^{-(x- z_j)/(r a_n)} [O_k(\omega(z_j - s)) - T]^+
    \\
    &\ll_k
    \sum_{\imath\le j\le\jmath}
    h e^{-(x - z_j)/(r a_n)}
    [\omega(z_j - s) - \Omega_k(T)]^+
    \\
    &\ll_r
    \sum_{\imath\le j\le\jmath}
    \int_{z_j - s}^{z_j - s + h} e^{-(x - s - t)/(r a_n)}
    [\omega(t-h) + \omega(t) - \Omega_k(T)]^+
    \,\dd t
  \end{align*}
  where the last line uses $e^{-(x-z_j)/(r a_n)} \asymp_r e^{-(x- s
    -t)/(r a_n)}$ for $t\in z_j - s + I$.  As a result,
  \begin{align*}
    V(s,x)
    \ll_r
    \int_{z_\imath - s- h}^{x-s+2h} e^{-(x-s+2h-t)/(r a_n)}
    [\omega(t)- \Omega_k(T)]^+\,\dd t.
  \end{align*}
  Recall $0\le x-s + 2h - (z_\imath - s- h) = \rx x - \imath h + 3 h \le
  \rx x + 3h$ and $c x - h \le z_\imath - s$.  By $c\eta > \rx$, for
  $x\gg_{k,\eta} 1$, $\rx x + 3h < \eta (z_\imath - s- h)$, giving
  $z_\imath - s - h > (1-\eta) (x-s + 2h)$, and so $V(s,x) \ll
  \Iof_{\eta, n, r}(x-s+2h, \Omega_k(T))$ by the definition of
  $\Iof_{\eta, n, r}$ in \eqref{e:IOF-def}.  Then by
  \eqref{e:large-sum-4}, 
  \begin{align*}
    H(x)
    &\ll_{k,r}
    \sup_{0\le s < (1-c) x + 2h}
    \Iof_{\eta, n, r}(x-s + 2h\AND \Omega_k(T))
    \\
    &\le_{k,r}
    \sup_{c x \le t \le x+2h} 
    \Iof_{\eta, n, r}(t\AND \Omega_k(T)),
  \end{align*}
  which together with \eqref{e:large-sum-1} -- \eqref{e:large-sum-3}
  yields
  \begin{align*}
    P_m(x)
    &\ll_{k,r}
    m n \cbfrac{\tail\cdf(x)}{x}
    \Sbr{
      T + \nth{a_{n-m}} \sup_{c x \le t \le x+2h}
      \Iof_{\eta, n, r}(t\AND \Omega_k(T))
    }.
  \end{align*}
  Note that the implicit constant in $\Omega_k(T)$ does not depend on
  $n$ or $m$.  Let $\Omega_k(T)\ge CT$.  If $C\ge 1$, then
  $\Iof_{\eta,n,r}(t\AND \Omega_k(T))\le \Iof_{\eta,n,r}(t\AND T)$.
  If $0<C<1$, then since $T\ge 0$ is arbitrary, the above inequality
  still holds if $T$ is replaced with $T/C$.  As a result, 
  \begin{align*}
    P_m(x)
    &\ll_{k,r}
    m n\cbfrac{\tail\cdf(x)}{x}
    \Sbr{
      T/C + \nth{a_{n-m}} \sup_{c x \le t \le x+2h}
      \Iof_{\eta,n,r}(t\AND T)
    }
    \\
    &\ll_{k,r}
    m n \cbfrac{\tail\cdf(x)}{x}
    \Sbr{
      T + \nth{a_{n-m}} \sup_{c x \le t \le x + 2h}
      \Iof_{\eta,n,r}(t\AND T)
    }.
  \end{align*}
  In any case, $\Omega_k(T)$ can be replaced with $T$.  Combining this
  bound with \eqref{e:large-sum-E-G} yields
  \begin{align*}
    &
    \pr{E_{n,k,x}\AND S_{n:k}>(1-\rx) x}
    \\
    &
    \ll_{k,r}
    \frac{n\tail\cdf(x)}{x} 
    \sum_{m=1}^n m
    \Sbr{
      T + \nth{a_{n-m}} \sup_{c x \le t \le x + 2h}
      \Iof_{\eta,n,r}(t\AND T)
    } \pr{\tau_n = m}.
  \end{align*}
  As remarked before Lemma \ref{l:large-sum}, $\pr{\tau_n = m} \ll_r
  \theta^m / m!$ for all $n\ge 1$ and $m\ge 0$, with $\theta\ll_r 1$.
  Therefore, $\sum_{m=1}^n m \pr{\tau_n = m} \ll_r 1$.  On the other
  hand, 
  \begin{align*}
    \sum_{m=1}^n \frac{m}{a_{n-m}} \pr{\tau_n=m}
    &\ll_r
    \sum_{m\le n/2} \frac{m}{a_{n-m}} \frac{\theta^m}{m!}
    + \sum_{m>n/2} \frac{m}{a_0} \frac{\theta^m}{m!}
    \\
    &\ll_r
    \nth{a_n} + \frac{\theta^{n/2}}{(\lceil n/2 \rceil -1)!},
  \end{align*}
  which is still $O_r(1/a_n)$.  This then finishes the proof.
\end{proof}

\subsection{Proof of Theorem \ref{t:sum-small-n}}
We shall give a detailed proof for the case $\xpp\in (0,1)$ and only
sketch a proof for the case $\xpp=1$ at the end, which is similar
and actually simpler.  Let $X^*, \inum{X^*}$ be i.i.d.\
$\sim \cdf^*(x) = \pr{0<X\le x\gv X>0}$.  As $x\toi$,
\begin{align*}
  \pr{X^*>x}
  =
  \lfrac{\tail\cdf(x)}{\xpp} \sim \lfrac{1}{\rvf^*(x)}
  \quad
  \text{with}\ \rvf^*(x) = \xpp \rvf(x).
\end{align*}
Whatever only depends on $\{\cdf^*, \rvf^*, h\}$ can also be treated
as only depending on $\{\cdf, \rvf, h\}$.  Objects defined via $X^*$
are marked by $*$, e.g., $\omega^*(x)$ and $E^*_{n,k,x}$.  Note that 
$\zeta^*_{n,x} = (a^*_n)^{1-\gamma} x^\gamma$, with the same $\gamma$
as in $\zeta_{n,x}$.  Let $Z, \inum Z$ be i.i.d.\ following the
distribution of $-X$ conditioning on $X\le 0$, and also be independent
from $(X^*, \inum{X^*})$.  Let
\begin{align*}
  S^*_n = \cum {X^*}n,
  \quad
  W_n = \cum Zn.
\end{align*}
Then for $m\le n$, conditioning on $N_n = m$, $(S^+_n, S^-_n)\sim
(S^*_m\AND  W_{n-m})$.

Fix $p = \xpp/2$ and $\ell\in\RV_0$ such that $\ell(x)/ \ln x\toi$
as $x\toi$.  Given $x>0$, denote
\begin{align*}
  x_n = x + S^-_n, \quad z_{n-m} = x + W_{n-m}.
\end{align*}
For each $n\ge 1$, define $b_n=0$ if $n<\ell(x)$, and $b_n= p n$ if
$n\ge \ell(x)$.  Then
\begin{align} \label{e:sum-small-n1}
  \cdf\cvp n(x+I)
  \le
  \pr{N_n < b_n} +
  \pr{S_n\in x + I\AND N_n\ge b_n}.
\end{align}
Since $\{S_n \in x + I\} = \{S^+_n \in x_n + I\}$, 
\begin{align} \label{e:sum-small-n2}
  \begin{split}
    \pr{S_n\in x + I\AND N_n \ge b_n}
    &=
    \sum_{b_n\le m\le n}
    \pr{S^+_n \in x_n + I\gv N_n=m} \pr{N_n=m}
    \\
    &=
    \sum_{b_n\le m\le n} \pr{S^*_m \in z_{n-m} + I}
    \pr{N_n=m}.
  \end{split}
\end{align}

Let $L(x)>0$, $T\ge 0$, $\eta\in (0,1)$, $r\in (0,1]$, and $1/2\le c_1
< c_0 \le 1\le c_2$ be fixed.  Fix $\rx\in (0,1)$, such that
\begin{align} \label{e:SRT-epsilon}
  \rx \le \eta/(\kappa+1), \quad (1-\rx) c_0\ge c_1.
\end{align}
Define
\begin{align*}
  \Delta_{m,x}
  &=
  \pr{
    S^*_m \in x + I\AND X^*_{m:\kappa+1} > \zeta^*_{m, x}
  },
  \\
  \Delta_{m,k,x}
  &=
  \pr{
    E^*_{m,k,x}\AND S^*_{m:k} \le (1-\rx) x
  },
  \\
  D_m(x)
  &= 
  \sum_{1\le k\le\kappa} \pr{
    E^*_{m,k,x}\AND S^*_{m:k} >
    (1-\rx) x
  }.
\end{align*}
For each $b_n\le m\le n$, by $z_{n-m}\ge x$ and independence of
$S^*_m$ and $z_{n-m}$,
\begin{align} \label{e:sum-small-n3}
  \begin{split}
    &
    \pr{S^*_m \in z_{n-m} + I}
    \\
    &\le
    \sup_{z\ge x\AND b_n\le m\le n} \Delta_{m,z}
    + \sum_{0\le k\le \kappa}
    \sup_{z\ge x\AND m\le n} \Delta_{m,k,z}
    + \mean D_m(z_{n-m}).
  \end{split}
\end{align}
Fix $c\in (0,1)$, such that $c a^*_n \le a_n$ for all $n\ge 0$.  By
Lemma \ref{l:large-sum}, for $x\gg_\eta 1$ and $m<\rvf^*(x)$,
\begin{align*}
  D_m(x)
  &\ll_r
  \frac{m\tail\cdf^*(x)}{x}
  \Sbr{
    T + \nth{a^*_m} \sup_{(1-\rx) x/\kappa \le t \le x+2h}
    \Iof^*_{\eta,m, c r}(t, T)
  }.
\end{align*}
For all $0<\delta\le p^{1/\alpha}$ and $x\gg 1$, $\rvf(\delta x) \le
\rvf^*(x)$.  Then for $m\le n<\rvf(\delta x)$, as $m<\rvf^*(x)$ and
$z_{n-m}\ge x$, the above inequality yields
\begin{align*}
  \mean D_m(z_{n-m})
  &
  \ll
  \mean\Cbr{
    \frac{m \tail\cdf^*(z_{n-m})}{z_{n-m}}
    \Sbr{
      T + \nth{a^*_m}
      \sup_{(1-\rx) z_{n-m}/\kappa \le t \le z_{n-m}+2h}
      \Iof^*_{\eta,m, c r}(t, T)
    }
  }.
\end{align*}
Since $\omega^*(x) = \omega(x)$ and $c a^*_m \le a_m$, from
\eqref{e:IOF-def}, $\Iof^*_{\eta,m,c r}(x,T) \le
\Iof_{\eta,m,r}(x,T)$.  It follows that
\begin{align*}
  \mean D_m(z_{n-m}) 
  &\ll
  \mean\Cbr{
    \frac{m\tail\cdf(x_n)}{x_n}
    \Sbr{
      T + 
      \nth{a_m} \sup_{(1-\rx) x_n/\kappa \le t \le x_n+2h}
      \Iof_{\eta,m,r}(t, T)
    }
    \,\vline\, N_n=m
  }.
\end{align*}
Combine the above inequality with
\eqref{e:sum-small-n1} -- \eqref{e:sum-small-n3} to get
\begin{align*}
  \cdf\cvp n(x+I)
  &\ll
  \pr{N_n<b_n}  + 
  \sup_{z\ge x\AND b_n\le m\le n} \Delta_{m,z}
  + \sum_{0\le k\le \kappa}
  \sup_{z\ge x\AND m\le n}
  \Delta_{m,k,z}
  \\
  &\quad\
  + \mean\Cbr{
    \frac{N_n\tail\cdf(x_n)}{x_n}
    \Sbr{
      T+
      \nth{a_{N_n}} \sup_{(1-\rx) x_n/\kappa \le t \le x_n+ 2h}
      \Iof_{\eta, N_n, r}(t, T)
    }
  }.
\end{align*}
For $1\ll_\eta c_0 x\le y\le c_2 x$, the inequality still holds if
$(x, x_n)$ is replaced with $(y, y_n)$, where $y_n = y+S_n$.  Then by
$c_0 x_n\le y_n \le c_2 x_n$ and $\tail\cdf(y_n)/y_n \ll
\tail\cdf(x_n)/x_n$, it is seen
\begin{align*}
  \sup_{c_0 x\le y\le c_2 x} \cdf\cvp n(y+I)
  &\ll
  \pr{N_n< b_n} + 
  \sup_{z\ge c_1 x\AND b_n\le m\le n}
  \Delta_{m,z}
  + \sum_{0\le k\le\kappa}
  \sup_{z\ge c_1 x\AND m\le n}
  \Delta_{m,k,z}
  \\
  &\quad\
  + \mean\Cbr{
    \frac{N_n\tail\cdf(x_n)}{x_n}
    \Sbr{
      T+
      \nth{a_{N_n}} \sup_{c_1 x_n/\kappa \le t \le c_2 x_n + 2h}
      \Iof_{\eta, N_n, r}(t, T)
    }
  }.
\end{align*}
Taking sum over $L(x)\le n<\rvf(\delta x)$, whether or not
$L(x)<\ell(x)$, one gets 
\begin{align} \label{e:sum-small-n4}
  G_\delta(x)
  :=
  \sum_{L(x)\le n<\rvf(\delta x)}
  \sup_{c_0 x\le y\le c_2 x} \cdf\cvp n(y + I)
  \ll \sum_{i=0}^3 Q_i 
\end{align}
where
\begin{gather*}
  Q_0
  =
  \sum_{n\ge \ell(x)} \pr{N_n<p n},
  \\
  Q_1
  =
  \sum_{n < \ell(x)} \sup_{z\ge c_1 x\AND m\le n}
  \pr{
    X^*_{m:\kappa+1} > \zeta^*_{m, z}
  }
  + \sum_{n<\rvf(\delta x)}
  \sup_{z\ge c_1 x\AND p n\le m\le n} \Delta_{m,z},
  \\
  Q_2
  =
  \sum_{n<\rvf(\delta x)} \sum_{0\le k\le \kappa}
  \sup_{z\ge c_1 x\AND m\le n} \Delta_{m,k,z},
\end{gather*}
and
\begin{align*}
  Q_3
  &=
  \sum_{L(x)\le n<\rvf(\delta x)} \mean\Sbr{
    \frac{T n \tail\cdf(x_n)}{x_n}
    +
    \frac{N_n} {a_{N_n}}
    \frac{\tail\cdf(x_n)}{x_n}
    \sup_{c_1 x_n/\kappa \le t \le c_2 x_n + 2h}
    \Iof_{\eta, N_n, r}(t, T)
  }.
\end{align*}

By $p=\xpp/2$, $\pr{N_n < p n} \ll e^{-c n}$ for some $c>0$.  By
$\ell(x)/\ln x\toi$,
\begin{align*}
  Q_0\ll
  e^{-c \ell(x)}/(1-e^{-c}) = o(x^{-M})
\end{align*}
for any $M>0$.  On the other hand, apply Lemma
\ref{l:many-large-jumps} to $Q_1$ and Lemma \ref{l:small-sum} to
$Q_2$.  It follows that, for $0< \delta\ll_\eta 1$,
\begin{align*} 
  \Lsup_{x\toi}
  \frac{x G_\delta(x)}{\rvf(x)}
  &\ll
  o(\delta^\alpha)
  +
  T\Lsup_{x\toi}
  \frac{x}{\rvf(x)} \sum_{L(x)\le n < \rvf(\delta x)}
  n\mean\Sbr{
    \frac{\tail\cdf(x_n)}{x_n}
  }
  +
  \Lsup_{x\toi} \XR(x, \delta).
\end{align*}
Since for $z\ge x\ge
1$, $\tail\cdf(z)/z \ll \tail\cdf(x)/x$,
\begin{align*}
  \frac{x}{\rvf(x)}\sum_{L(x)\le n<\rvf(\delta x)}
  n\mean\Sbr{\frac{\tail\cdf(x_n)}{x_n}}
  &\ll
  \tail\cdf(x)^2
  \sum_{L(x)\le n<\rvf(\delta x)} n
  \\
  &\ll
  \tail\cdf(x)^2 \rvf(\delta x)^2 \ll \delta^{2\alpha}.
\end{align*}
This combined the previous display then yields the desired result.

Finally, we comment on the case $\xpp = 1$.  For $n\ge
1$, $x>0$, and $\rx\in (0,1)$,
\begin{align*}
  \cdf\cvp n(x+I)
  &\le
  \pr{S_n \in x + I\AND X_{n:\kappa+1} > \zeta_{n,x}}
  + 
  \sum_{0\le k\le\kappa} \pr{
    E_{n,k,x}\AND S_{n:k} \le (1-\rx) x
  }
  \\
  &\quad
  +
  \sum_{1\le k\le \kappa} \pr{
    E_{n,k,x}\AND S_{n:k} >
    (1-\rx) x
  }.
\end{align*}
Then \eqref{e:sum-small-n4} can be simplified into
\begin{align*} 
  \begin{split}
    G_\delta(x)
    &\ll
    \sum_{n<\rvf(\delta x)}
    \sup_{z\ge x/2}
    \pr{
      S_n \in z + I\AND X_{n:\kappa+1} > \zeta_{n,z}
    } 
    \\
    &\quad\
    + \sum_{n<\rvf(\delta x)} \sum_{0\le k\le \kappa}
    \sup_{z\ge x/2}
    \pr{
      E_{n,k,z}\AND S_{n:k} \le (1-\rx) z
    }
    \\
    &\quad\
    + 
    \frac{\tail\cdf(x)}{x}
    \sum_{1\le k\le \kappa}
    \sum_{L(x)\le n<\rvf(\delta x)} \Sbr{
      T n
      +
      \frac{n} {a_n}
      \sup_{c_1 x/\kappa \le t \le c_2 x + 2h}
      \Iof_{\eta,n,r}(t, T)
    }.
  \end{split}
\end{align*}
By following the rest of the proof for the case $\xpp\in (0,1)$, the
desired result obtains.

\subsection{Proof of Proposition \ref{p:sum-small-n-low}}
We need the following result.  Although it bears some similarity with
Lemma \ref{l:local-ldp}, it is more appropriate to regard it as a
variation of the LLT.
\begin{lemma} \label{l:local-lower-bound}
  Let \eqref{e:tail-RV} hold and $\alpha \in (0,1)$.  Then there is
  $C>0$ only depending on $\{\cdf,\rvf\}$, such that given
  $\eta >0$, for all $n\gg_\eta 1$ and $s\ge a_n$,
  \begin{align*}
    \sup_x
    |a_n \pr{S_n \in a_n x + I\AND X_{n:1}\le s} - h \limdf(x)|
    \le
    C h n/\rvf(s)+\eta,
  \end{align*}
  where $h>0$ arbitrary is $\cdf$ is non-lattice, and the span of
  $\cdf$ otherwise.
\end{lemma}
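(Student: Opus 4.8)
The plan is to compare $\pr{S_n\in a_nx+I\AND X_{n:1}\le s}$ with the unrestricted probability $\pr{S_n\in a_nx+I}$, to which the strong local limit theorem applies, and to show that the ``large-jump'' part $\pr{S_n\in a_nx+I\AND X_{n:1}>s}$ contributes only $O(hn/\rvf(s))$ once multiplied by $a_n$, uniformly in $x\in\Reals$ and $s\ge a_n$.

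First I would invoke the strong local limit theorem for $\cdf$ in the domain of attraction of a stable law \cite[][Th.~8.4.1--2]{bingham:89}, which holds for lattice $\cdf$ with $h$ the span and for non-lattice $\cdf$ with $h$ arbitrary: $\sup_x\Abs{a_n\pr{S_n\in a_nx+I}-h\limdf(x)}\to 0$ as $n\toi$. Thus, given $\eta>0$, this supremum is $\le\eta$ for all $n\gg_\eta 1$. Since
\begin{align*}
  a_n\pr{S_n\in a_nx+I\AND X_{n:1}\le s}
  = a_n\pr{S_n\in a_nx+I}-a_n\pr{S_n\in a_nx+I\AND X_{n:1}>s},
\end{align*}
the two-sided estimate of the lemma reduces to bounding the last term by $Chn/\rvf(s)$ for $n$ large, with $C$ free of $h$ --- the upper side of the absolute value being immediate from $\pr{S_n\in a_nx+I\AND X_{n:1}\le s}\le\pr{S_n\in a_nx+I}$.

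Next, for the large-jump term, a union bound over the index carrying the maximal summand, exchangeability, and the independence of $S_{n-1}$ and $X_n$ give
\begin{align*}
  \pr{S_n\in a_nx+I\AND X_{n:1}>s}
  \le n\,\pr{S_n\in a_nx+I\AND X_n>s}
  = n\int_{(s,\infty)}\pr{S_{n-1}\in a_nx-y+I}\,\cdf(\dd y)
  \le n\,\tail\cdf(s)\,\sup_z\pr{S_{n-1}\in z+I}.
\end{align*}
The key gain is that deleting the maximal summand leaves a sum whose concentration function is still of order $1/a_n$ --- by the LLT applied to $S_{n-1}$, $\sup_z\pr{S_{n-1}\in z+I}\le C_0 h/a_{n-1}$ for $n$ large, with $C_0=C_0(\cdf,\rvf)$ --- which supplies the factor $1/a_n$ that the crude estimate $\pr{X_{n:1}>s}\le n\tail\cdf(s)$ loses. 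Combining this with $\tail\cdf(s)\rvf(s)\to 1$ (so $\tail\cdf(s)\le 2/\rvf(s)$ for $s\ge a_n$ with $n$ large, since then $a_n$ is large) and $a_n/a_{n-1}\to 1$, I get $a_n\pr{S_n\in a_nx+I\AND X_{n:1}>s}\le Chn/\rvf(s)$ with $C$ depending only on $\{\cdf,\rvf\}$, as required.

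The only point needing genuine care is the $h$-linearity of $C_0$ (equivalently of $\sup_z\pr{S_{n-1}\in z+I}$), which is what keeps the final $C$ free of $h$. This is vacuous for lattice $\cdf$, where $h$ is the span and hence fixed by $\cdf$; for non-lattice $\cdf$ it follows from subadditivity of the concentration function in the interval length, reducing the estimate to a fixed reference length in the uniform local limit bound of \cite{bingham:89}. Beyond this bookkeeping and the standard regular-variation facts $a_n\asymp a_{n-1}$, $\tail\cdf(s)\asymp 1/\rvf(s)$, the argument is routine; the only substantive input is the strong LLT itself.
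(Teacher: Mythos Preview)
Your argument is correct and takes a genuinely different, more elementary route than the paper. The paper proves the lemma by modifying Stone's Fourier-analytic smoothing argument: it introduces the truncated characteristic function $f_s(t)=\mean[e^{\iunit tX}\cf{X\le s}]$, shows $|f_s(t)^n-f(t)^n|\le (Cn/\rvf(s))|f(t)|^n$ on a neighborhood of the origin, and controls the tail of the Fourier integral via non-latticeness (or the lattice structure), then passes from the smoothed quantities $V_n$, $V_{n,s}$ back to the local probabilities. You instead decompose directly: invoke the LLT for the unrestricted $\pr{S_n\in a_nx+I}$, and bound the large-jump correction by a union bound together with the concentration estimate $\sup_z\pr{S_{n-1}\in z+I}\ll h/a_n$. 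Your route is shorter and more transparent; the paper's Fourier approach has the advantage that the $h$-linearity of the bound (hence $C$ free of $h$ in the non-lattice case) drops out automatically from the $r$-linearity of $\lambda_{r,\sigma}$, whereas you have to supply it separately via the subadditivity of the concentration function. That last step is the one place where your sketch is slightly thin for small $h$, but since $h$ is fixed throughout the paper this is a minor bookkeeping point rather than a gap.
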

\begin{proof}
  We shall only prove the lemma for the non-lattice case by modifying
  the argument in \cite{stone:65:ams}.  The lattice case can be proved
  similarly based on \cite{gnedenko:54:aw}, p.~236-240.  Define
  $K(x) = \sin(x/2)/(\pi x)$ and $K_\sigma(x) = \sigma^{-1}
  K(x/\sigma)$ for $\sigma>0$.  As in \cite{stone:65:ams}, define
  \begin{align*}
    V_n(x,r,\sigma)
    =
    \int K_\sigma(x-y) \pr{S_n \in a_n y + (0, a_n r]}\, \dd y.
  \end{align*}
  Meanwhile, define similarly
  \begin{align*}
    V_{n,s}(x,r,\sigma)
    =
    \int K_\sigma(x-y) \pr{S_n \in a_n y + (0, a_n r]\AND
    X_{n:1}\le s}\, \dd y.
  \end{align*}
  In the following, $C$ is a constant only depending on $\{\cdf,
  \rvf\}$ that may change from line to line.  It suffices to show
  that given $\eta>0$ and $D\ge 1$, for $n\gg_{\eta,D} 1$, $s\ge a_n$,
  $r>0$ and $\sigma \ge (D a_n)^{-1}$,
  \begin{align}  \label{e:local-low-main}
    \sup_x |V_n(x,r,\sigma) - V_{n,s}(x,r,\sigma)|
    \le r[C n/\rvf(s) + \eta].
  \end{align}
  Indeed, the argument that starts with the last two inequalities on
  p.~550 of \cite{stone:65:ams} can be applied to $V_{n,s}(x,r,
  \sigma)$ and $\pr{S_n\in a_n y + (0, a_n r]\AND X_{n:1}\le s}$, to
  get inequalities similar to those for $V_n(x,r,\sigma)$ and
  $\pr{S_n\in a_n y + (0, a_n r]}$ provided on p.~551--551 of
  \cite{stone:65:ams}.  These inequalities combined with
  \eqref{e:local-low-main} then yield the lemma.  To show
  \eqref{e:local-low-main}, let $f(t) = \mean[e^{\iunit t X}]$,
  $f_s(t) = \mean[e^{\iunit t X} \cf{X\le 
    s}]$, and $\lambda_{r,\sigma}(t) = (1-|\sigma t|)^+ (1-e^{-\iunit
    r t})/(\iunit r t)$.  Then from \cite{stone:65:ams},
  $|\lambda_{r,\sigma}(t)| \le 1$ and given $\rx>0$,
  \begin{align*}
    V_{n,s}(x,r,\sigma)
    &=
    \frac{r}{2\pi}
    \int_{-1/\sigma}^{1/\sigma} e^{-\iunit x t}
    \lambda_{r,\sigma}(t) f_s(t/a_n)^n\,\dd t
    \\
    &=
    \frac{r}{2\pi}
    \Grp{
      \int_{|t| \le (\rx a_n)\wedge (1/\sigma)}+
      \int_{(\rx a_n)\wedge (1/\sigma)< |t|\le 1/\sigma}
    }
    =
    \frac{r}{2\pi}(I_{1,s} + I_{2,s})
  \end{align*}
  and $V_n(x,r,\sigma) = (r/2\pi) (I_1 + I_2)$, where $I_i$ are
  integrals defined likewise in terms of $f$.  Since $f(t)\to 1$ as
  $t\to 0$ and
  \begin{align} \label{e:basic-f}
    \sup |f_s - f| \le \tail\cdf(s),
  \end{align}
  for $|t|\ll 1$, $|f_s(t) - f(t)| \le
  2\tail\cdf(s)|f(t)|$, and so by $|(1+z)^n-1| \le (1+|z|)^n-1$,
  \begin{align*}
    |f_s(t)^n - f(t)^n|
    &\le
    |f(t)|^n \Sbr{
      \Grp{
        1+\Abs{
          \frac{f_s(t)}{f(t)}-1
        }
      }^n-1
    } 
    \\
    &\le
    |f(t)|^n \Sbr{
      \Grp{
        1+2\tail\cdf(s)
      }^n-1
    }
    \\
    &\le
    [e^{2\tail\cdf(s) n}-1] |f(t)|^n,
  \end{align*}
  which yields that, for $n\ge 1$ and $s\ge a_n$, $|f_s(t)^n - f(t)^n|
  \le (C n/\rvf(s)) |f(t)|^n$.  It follows that given $0<\rx\ll 1$,
  for all $x$,
  \begin{align*}
    |I_{1,s} - I_1|
    &\le
    \int_{|t|<\rx a_n} |f_s(t/a_n)^n - f(t/a_n)^n|\,\dd t
    \le
    \frac{C n}{\rvf(s)} \int_{|t|<\rx a_n} |f(t/a_n)|^n \, \dd t.
  \end{align*}
  Then from \cite{stone:65:ams}, p.~549, $|I_{1,s}-I_1| \le C
  n/\rvf(s)$.  On the other hand, since $\cdf$ is non-lattice,
  $\sup_{\rx\le |t|\le D} |f(t)|<1$.  By \eqref{e:basic-f}, for
  $s\gg_{\rx,D} 1$, $\sup_{\rx\le |t|\le D} |f_s(t)|<1$.  Then as
  \cite{stone:65:ams}, p.~549, for $n\gg_{\rx,D,\eta} 1$, $s\ge
  a_n$, and $\sigma\ge (D a_n)^{-1}$, $|I_{2,s} - I_2|\le |I_{2,s}| +
  |I_2| \le \eta$ for all $x$.  Finally, since
  \begin{align*}
    0
    \le
    V_n(x,r,\sigma) - V_{n,s}(x,r,\sigma)
    \le
    \frac{r}{2\pi}(|I_{1,s} - I_1| + |I_{2,s} - I_2 |),
  \end{align*}
  combining the above bounds, \eqref{e:local-low-main} follows.
\end{proof}

\begin{proof}[Proof of Proposition \ref{p:sum-small-n-low}]
  Let $E=[\theta_1, \theta_2]$.  Fix $G=(c,d)\supset E$, such that
  $\inf_G \limdf > 0$.  Denote $c_n = c a_n$ and $d_n = d a_n$.  Then
  for $x\gg_G 1$, $0<\delta \ll_{G,\gamma} 1$, and
  $n<\rvf(\delta x)$, $x - d_n > \zeta_{n,x}$, and hence the events
  $\{X_i> x - d_n\AND X_j\le \zeta_{n,x}\AND j\ne i\}$, $i=1,\ldots,
  n$, are disjoint.  It follows that
  \begin{align*}
    \cdf\cvp n(x+J)
    &\ge
    n \sum_{c_n < j h < d_n}
    \pr{X_n \in x - j h + I\AND S_{n-1}\in j h + I\AND X_{n-1:1}
      \le \zeta_{n,x}}
    \\
    &=
    n \sum_{c_n < j h < d_n}
    \cdf(x-j h+I) \pr{S_{n-1}\in j h + I\AND X_{n-1:1}\le
      \zeta_{n,x}}.
  \end{align*}
  In the following, let $x\gg_G 1$, $0<\delta \ll_{G,\gamma} 1$, and
  $n_0\gg_{G,\gamma} 1$, which at each step of argument may be further
  increased.  By Lemma \ref{l:local-lower-bound}, for $n_0\le n
  \le\rvf(\delta x)$, and $z\in (c_n, d_n)$,
  \begin{align*}
    \pr{S_{n-1}\in z + I\AND X_{n-1:1}\le \zeta_{n,x}} 
    \ge \frac{0.9 h}{a_n} \limdf(z/a_n).
  \end{align*}
  Then
  \begin{align*}
    \cdf\cvp n(x+J)
    &\ge
    \frac{0.9 h n}{a_n}
    \sum_{c_n < j h < d_n} \limdf(j h/a_n) \cdf(x - j h + I).
  \end{align*}
  Because $\limdf\in C^\infty$, we also have
  \begin{align*}
    \cdf\cvp n(x+J)
    &\ge
    \frac{0.9 h n}{a_n}
    \sum_{c_n < j h < d_n}
    \limdf((j-1) h/a_n) \cdf(x - j h + I)
    \\
    &=
    \frac{0.9 h n}{a_n}
    \sum_{c_n -h < j h < d_n - h }
    \limdf(j h/a_n) \cdf(x - j h - h+ I).
  \end{align*}
  Take average of the inequalities to get
  \begin{align*}
    \cdf\cvp n(x+J)
    &\ge
    \frac{0.9 h n}{2 a_n}
    \sum_{c_n < j h < d_n-h}
    \limdf(j h/a_n) [\cdf(x - j h + I)+\cdf(x - j h - h+ I)].
    \end{align*}
    For each $j$ in the sum and $t\in [j h, j h+h)$, $\cdf(x-t+I) \le
    \cdf(x-j h + I) + \cdf(x - j h -h+I)$ and $\limdf(j h/a_n)\ge 0.9
    \limdf(t/a_n)$.  Then
    \begin{align*}
      \cdf\cvp n(x+J)
      &\ge
      \frac{0.4 n}{a_n} \int_{\theta_1 a_n}^{\theta_2 a_n}
      \limdf(t/a_n) \cdf(x-t+I)\,\dd t
      \\
      &=
      \frac{0.4 n}{a_n} \int_{\theta_1 a_n}^{\theta_2 a_n}
      \limdf(t/a_n) \frac{\tail\cdf(x-t)\omega(x-t)}{(x-t)}
      \,\dd t
      \\
      &\ge
      \frac{0.3 n}{a_n} \nth{x\rvf(x)} 
      \int_{\theta_1 a_n}^{\theta_2 a_n} \limdf(t/a_n) 
      \omega(x-t)\,\dd t.
    \end{align*}
    The claim of the proposition then easily follows.
\end{proof}

\section{Proofs of Propositions} \label{s:Prior}
\subsection{Proof of Proposition \ref{p:L-prior}}
Without loss of generality, assume $M(x)$ is continuous.  First, note
that 
\begin{align} \label{e:M-bound}
  M(x)/x\ll  1, \quad x\ge 1.
\end{align}
Indeed, for $x\gg 1$, since $(x,2x]$ can be divided into $\lceil
x/h\rceil$ disjoint intervals of length at most $h$, there is $t\in
(x,2x]$ with $\cdf(t+I) \ge \cdf((x,2x])/\lceil x/h\rceil \asymp
\tail\cdf(x)/x\asymp \tail\cdf(t)/t$, yielding $\omega(t) \gg 1$.
Since $x/M(x)\in\RV$, then $x/M(x)\gg t/M(t)\gg \omega(t)$, yielding
\eqref{e:M-bound}.

The proofs of 1) and 2) have a large overlap.  Given $L\in\RV$ as
prescribed, let $b(x) = L(x)/g(x)$ and $V(x) = \rvf(x) M(x)$.  Then
$V\in\RV_{\alpha+\beta}$ and
\begin{align} \label{e:M-b-V}
  b(x) V(x) =\sqrt{x M(x)} L(x)
\end{align}
Since $L(x)\ll g(x)/(\ln x)^\gamma$ in 1) and $L(x)\ll g(x)^\rx$ with
$\rx<1$ in 2), and $g(x)\toi$, then $b(x)\to 0$.  Since $V$ is
strictly increasing and maps $(0,\infty)$ on to $(0,\infty)$,
for every $x>0$, there is a unique $T(x)>0$, such that $V(T(x)) = b(x)
V(x)$.  As $x\toi$, it is seen $T(x) = o(x)$ and by \eqref{e:M-b-V},
$T(x)\toi$.  Then by Potter's theorem \cite[][Th.~1.5.6]{bingham:89},
for any $r>\alpha+\beta$,
\begin{align} \label{e:prior-T}
  T(x)/x = O(b(x)^{1/r}).
\end{align}
Fix $\theta>0$.  Let $J=(-h, h)$.  For each $n\ge 1$ and $t$, by
$I-I=J$,
\begin{align*}
  \pr{S_n\in t+I\AND X_{n:1}>T(x)}
  &\le
  n \pr{S_n\in t+I\AND X_n > T(x)}
  \\
  &=
  n \sumzi k \pr{S_n \in t+I\AND X_n \in T(x)+k h + I} 
  \\
  &\le
  n \sumzi k\pr{S_{n-1}\in t-T(x)-k h + J} \cdf(T(x) + k h+I)
  \\
  &\ll
  n\sup_{s\ge T(x)} \cdf(s+I),
\end{align*}
with the last line due to \eqref{e:large-sum-2}.  By
\eqref{e:tail-M}, $\cdf(s+I) = \omega(s) \tail\cdf(s)/s \ll
1/V(s)$.  Then for large $x>0$ and $s\ge T(x)$, $\cdf(s+I) \ll
1/V(T(x))$, giving
\begin{align*} 
  \frac{x}{\rvf(x)}
  \sum_{n<L(x)} \sup_{t\in\Reals}
  \pr{S_n\in t+I\AND X_{n:1}>T(x)}
  &\ll
  \frac{x}{\rvf(x) V(T(x))}
  \sum_{n<L(x)} n
  \\
  &\ll
  \frac{x L(x)^2}{\rvf(x)V(T(x))} = b(x).
\end{align*}
For 1), from Lemma \ref{l:local-ldp}, for some $c\ge 0$ only depending
on $\{\cdf, \rvf\}$,
\begin{align*}
  \frac{x}{\rvf(x)} \sum_{n<L(x)} 
  \sup_{t\ge \theta x}
  \pr{S_n\in t+I\AND X_{n:1}\le T(x)}
  &\ll
  x\tail\cdf(x) L(x) e^{-\theta x/T(x)+ c L(x)/\rvf(T(x))}.
\end{align*}
Since $b(x)= o((\ln x)^{-\gamma})$ with $\gamma>\alpha+\beta$, by
\eqref{e:prior-T}, $T(x) = o(x/\ln x)$.  On the other hand, by
\eqref{e:M-b-V}, $L(x)/\rvf(T(x)) = M(T(x))/\sqrt{x M(x)}$, which is
$o(1)$ due to $T(x)=o(x)$ and \eqref{e:M-bound}.  Then the RHS in the
display is $o(1)$, which together with the previous display yields
\eqref{e:low-cut}.  For 2), it suffices to show $L(x) T(x) = o(x)$, as
then the LHS in the display is 0.  Since $L(x) = g(x)^\rx$, 
where $\rx\in (0, 1/(1+\alpha + \beta))$, $b(x) = g(x)^{\rx - 1}$.
Fix $\eta>\alpha+\beta$ such that $(1+\eta)\rx < 1$.  Then by \eqref
{e:prior-T}, $L(x) T(x)/x \ll g(x)^{\rx -(1-\rx)/\eta} = o(1)$,
as desired.

\subsection{Proof of Proposition \ref{p:density}}
Fix $L(x)\equiv 1$, $\eta\in (0,1)$, $r=1$, and $1/2\le c_1<1\le c_2$.
By Theorem \ref{t:sum-small-n} and the proof of Theorem \ref
{t:SRT-alpha-LE-1/2}, it suffices to show $\Lsup_{x\toi}
\XR(x,\delta)=o(1)$ as $\delta\to 0$.  Let $\sigma_{n,x} = a_n^{1-s}
x^s$.  For $n< \rvf(\delta x)$, since $\sigma_{n,x} \ge x^s\vee a_n$
and $E_T$ has density $O(x^{-c})$ at scale $x^s$,
\begin{align*}
  \Iof_{\eta,n,1}(x,T)
  &\ll
  x^c \int_{(1-\eta) x}^x e^{-(x-y)/a_n}
  \cf{\omega(y)>T}\,\dd y
  \\
  &\le
  x^c \sum_{0\le k \le \eta x/\sigma_{n,x}}
  e^{-k\sigma_{n,x}/a_n}
  |E_T\cap [x - (k+1)\sigma_{n,x}, x- k\sigma_{n,x}]|
  \\
  &\ll
  x^c\sum_{0\le k \le \eta x/\sigma_{n,x}}
  e^{-k} x^{-c} \sigma_{n,x}
  \le
  x^s a_n^{1-s}.
\end{align*}
Then $\cbfrac{\tail\cdf(x)}{x} \Iof_{\eta,n,1}(x,T)\ll a_n^{1-s}
f(x)$, where $f(x)=x^{s-1}/\rvf(x)\in\RV$ with exponent
$s-1-\alpha < 0$.  It follows that for $z\ge x$ and $n\ge 1$,
\begin{align*}
  \cbfrac{\tail\cdf(z)}{z}
  \sup_{c_1 z\le t \le c_2 z} \Iof_{\eta, n,1}(t,T)
  \ll
  a_n^{1-s} f(z)
  \ll
  a_n^{1-s} f(x).
\end{align*}
As a result,
\begin{align*}
  \XR(x,\delta)
  \ll
  \frac{x f(x)}{\rvf(x)}
  \sum_{n<\rvf(\delta x)} \mean[N_n/a_{N_n}^s].
\end{align*}
Using the argument in the proof of Lemma \ref{l:R-bound} and noting
$s<2\alpha$,
\begin{align*}
  \sum_{n<\rvf(\delta x)} \mean[N_n/a_{N_n}^s]
  \asymp
  \int_1^{\delta x} \cbfrac{\rvf(t)^2}{t^{1+s}}\,\dd t
  \asymp \frac{\rvf(\delta x)^2}{(\delta x)^s}. 
\end{align*}
It follows that $\XR(x,\delta) \ll \delta^{2\alpha -s}$.  Since
$s<2\alpha$, the desired result follows.

\subsection{Proof of Proposition \ref{p:L-prior-ladder}}
Since $\rvf_+\in \RV_{\alpha \ppar}$ and $x/M(x)=O(x^{2c\alpha \ppar})
= o(\rvf_+(x)^2)$, by Proposition \ref{p:L-prior}, it suffices to show
$\omega_+(x) := x\cdf\!_+(x + I)/\tail\cdf\!_+(x) \ll x/M(x)$ for
$x\ge 1$.  By \eqref{e:renewal-ladder},
\begin{align*}
  \cdf\!_+(x + I)
  &=
  \intzi \frac{\omega(x+y)\tail\cdf(x+y)}{x+y}\,\renf_-(\dd y)
  \\
  &\le
  \intzi \frac{\tail\cdf(x+y)}{M(x+y)}\,\renf_-(\dd y)
  \le
  \nth{M(x)}
  \intzi \tail\cdf(x+y)\,\renf_-(\dd y).
\end{align*}
Since the last integral is equal to $\tail\cdf\!_+(x)/M(x)$, then
the result follows.

\begin{small}
\def\cprime{$'$} \def\cprime{$'$} \def\cprime{$'$}

\end{small}

\end{document}